\numberwithin{equation}{section}
\renewcommand\le{\leqslant}
\renewcommand\ge{\geqslant}
\newtheorem{theorem}{Theorem}[section]
\newtheorem{corollary}[theorem]{Corollary}
\theoremstyle{definition}
\newtheorem{exampleqqq}[theorem]{Example}
\newenvironment{example}{\begin{exampleqqq}}
  {\hfill\qedsymbol\end{exampleqqq}}
\newtheorem{remarkqqq}[theorem]{Remark}
\newenvironment{remark}{\begin{remarkqqq}}
  {\hfill\qedsymbol\end{remarkqqq}}
\newtheorem{definition}[theorem]{Definition}
\theoremstyle{remark}
\newenvironment{romenumerate}[1][0pt]{
\addtolength{\leftmargini}{#1}\begin{enumerate}
 }{\end{enumerate}}
\newcounter{oldenumi}
{\setcounter{oldenumi}{\value{enumi}}
\begin{romenumerate} \setcounter{enumi}{\value{oldenumi}}}
{\end{romenumerate}}
\newcounter{thmenumerate}
\newcounter{romxenumerate}   
\newcounter{xenumerate}   
\newcommand\pfitemx[1]{\par#1:}
\newcommand\pfitemref[1]{\pfitemx{\ref{#1}}}
\newcommand{\refT}[1]{Theorem~\ref{#1}}
\newcommand{\refTs}[1]{Theorems~\ref{#1}}
\newcommand{\refC}[1]{Corollary~\ref{#1}}
\newcommand{\refR}[1]{Remark~\ref{#1}}
\newcommand{\refS}[1]{Section~\ref{#1}}
\newcommand{\refD}[1]{Definition~\ref{#1}}
\newcommand{\refE}[1]{Example~\ref{#1}}
\newcommand{\refEs}[1]{Examples~\ref{#1}}
\xdef\klockan{\the\count1.0\the\count255}
\xdef\klockan{\the\count1.\the\count255}\fi
\newcommand\nopf{\qed}   
\newcommand{\sumki}{\sum_{k=1}^\infty}
\newcommand{\sumn}{\sum_{n=0}^\infty}
\newcommand{\sumin}{\sum_{i=1}^n}
\newcommand\set[1]{\ensuremath{\{#1\}}}
\newcommand\xpar[1]{(#1)}
\newcommand\bigpar[1]{\bigl(#1\bigr)}
\newcommand\Bigpar[1]{\Bigl(#1\Bigr)}
\newcommand\biggpar[1]{\biggl(#1\biggr)}
\newcommand\lrpar[1]{\left(#1\right)}
\newcommand\sqpar[1]{[#1]}
\newcommand\bigsqpar[1]{\bigl[#1\bigr]}
\newcommand\lrsqpar[1]{\left[#1\right]}
\newcommand\xcpar[1]{\{#1\}}
\newcommand\bigabs[1]{\bigl|#1\bigr|}
\def\rompar(#1){\textup(#1\textup)}    
\newcommand\xfrac[2]{#1/#2}
\newcommand\parfrac[2]{\lrpar{\frac{#1}{#2}}}
\newcommand\Bigparfrac[2]{\Bigpar{\frac{#1}{#2}}}
\def\xexp(#1){e^{#1}}
\newcommand\floor[1]{\lfloor#1\rfloor}
\newcommand\nn{^{(n)}}
\newcommand\ntoo{\ensuremath{{n\to\infty}}}
\newcommand\xtoo{\ensuremath{{x\to\infty}}}
\newcommand\ttoo{\ensuremath{{t\to\infty}}}
\newcommand\bmin{\wedge}
\newcommand\downto{\searrow}
\newcommand\upto{\nearrow}
\newcommand\punkt[1]{\if.#1\else.\spacefactor1000\fi{#1}}
\newcommand\iid{i.i.d\punkt}    
\newcommand\ie{i.e\punkt}
\newcommand\eg{e.g\punkt}
\newcommand{\as}{a.s\punkt}
\newcommand\ii{\mathrm{i}}
\newcommand{\tend}{\longrightarrow}
\newcommand\dto{\overset{\mathrm{d}}{\tend}}
\newcommand\pto{\overset{\mathrm{p}}{\tend}}
\newcommand\eqd{\overset{\mathrm{d}}{=}}
\newcommand\bbR{\mathbb R}
\newcommand\bbC{\mathbb C}
\newcounter{CC}
\newcounter{cc}
\renewcommand\Re{\operatorname{Re}}
\renewcommand\Im{\operatorname{Im}}
\newcommand\E{\operatorname{\mathbb E{}}}
\renewcommand\P{\operatorname{\mathbb P{}}}
\newcommand\Var{\operatorname{Var}}
\newcommand\Exp{\operatorname{Exp}}
\newcommand\Po{\operatorname{Po}}
\newcommand\supp{\operatorname{supp}}
\newcommand\sgn{\operatorname{sgn}}
\newcommand\ga{\alpha}
\newcommand\gb{\beta}
\newcommand\gd{\delta}
\newcommand\gf{\varphi}
\newcommand\gam{\gamma}
\newcommand\gG{\Gamma}
\newcommand\gl{\lambda}
\newcommand\gL{\Lambda}
\newcommand\go{\omega}
\newcommand\gs{\sigma}
\newcommand\gss{\sigma^2}
\newcommand\gth{\theta}
\newcommand\eps{\varepsilon}
\renewcommand\phi{\xxx}  
\newcommand\cA{\mathcal A}
\newcommand\cE{\mathcal E}
\newcommand\cL{{\mathcal L}}
\newcommand\ett[1]{\boldsymbol1\xcpar{#1}}
\newcommand\qw{^{-1}}
\newcommand\qww{^{-2}}
\newcommand\qq{^{1/2}}
\renewcommand{\=}{:=}
\newcommand\intoi{\int_0^1}
\newcommand\intoo{\int_0^\infty}
\newcommand\intoooo{\int_{-\infty}^\infty}
\newcommand\oi{[0,1]}
\newcommand\ooo{[0,\infty)}
\newcommand\oooy{(0,\infty)}
\newcommand\dd{\,\mathrm{d}}
\newcommand{\mgf}{moment generating function}
\newcommand{\chf}{characteristic function}
\newcommand\rv{random variable}
\newcommand\lhs{left-hand side}
\newcommand\rhs{right-hand side}
\newcommand\etto{\bigpar{1+o(1)}}
\newcommand\id{infinitely divisible}
\newcommand\idd{infinitely divisible distribution}
\newcommand\xga{^{1/\ga}}
\newcommand\xgaw{^{-1/\ga}}
\newcommand\Lm{\Levy{} measure}
\newcommand\gssa{a}
\newcommand\hx{x\ett{|x|\le 1}}
\newcommand\ID{\mathrm{ID}}
\newcommand\piaq{\frac{\pi\ga}{2}}
\newcommand\tpiaq{\tfrac{\pi\ga}{2}}
\newcommand\tgam{\widetilde\gam}
\newcommand\pigq{\frac{\pi\tgam}{2}}
\newcommand\gamx{\gam^*}
\newcommand\tX{\widetilde X}
\newcommand\ggam{\bar\gamma}
\newcommand\kk{\kappa}
\newcommand\Sx{\mathrm{S}}
\newcommand\SSSS[4]{\Sx_{#1}(#2,#3,#4)}
\newcommand\SSSx[1]{\Sx_{#1}}
\newcommand\XXXX[4]{X_{#1}(#2,#3,#4)}
\newcommand\XXXx[1]{X_{#1}}
\newcommand\zz{\bullet}
\newcommand\U{\mathrm{U}}
\newcommand\glx{\gl}
\newcommand\gld{\gl}
\newcommand\Ai{\mathrm{Ai}}
\newcommand\YY{\mathrm Y}
\newcommand{\Levy}{L\'evy}
\newcommand\urladdrx[1]{{\urladdr{\def~{{\tiny$\sim$}}#1}}}
\begin{document}
\title
{Stable distributions}

\date{1 December, 2011; revised 24 February, 2022}


\author{Svante Janson}
\address{Department of Mathematics, Uppsala University, PO Box 480,
SE-751~06 Uppsala, Sweden}
\email{svante.janson@math.uu.se}
\urladdrx{http://www.math.uu.se/~svante/}

\subjclass[2000]{60E07}


\maketitle

\section{Introduction}\label{S:intro}
We give many explicit formulas for stable distributions, 
mainly based on \citet{FellerII} and \citet{Z} and
using several parametrizations;
we give also some explicit  calculations for convergence
to stable distributions,  mainly based on less explicit results
in \citet{FellerII}. 
The main purpose is to provide ourselves with easy reference to explicit
formulas and examples.
(There are probably no new results.)

\section{Infinitely divisible distributions}\label{Sid}

We begin with the more general concept of \id{} distributions.

\begin{definition}
The distribution of a 
random variable $X$ is \emph{\id} if for each $n\ge1$ there exists
\iid{} \rv{} $Y\nn_1,\dots,Y\nn_n$ such that
\begin{equation}\label{id}
 X\eqd Y\nn_1+\dots Y\nn_n.
\end{equation}
\end{definition}

The \chf{} of an \idd{} may be expressed in a canonical form, sometimes
called the 
\emph{\Levy--Khinchin representation}.
We give several equivalent versions in the following theorem.

\begin{theorem}\label{Tid}
Let $h(x)$ be a fixed bounded measurable
real-valued function on $\bbR$ such that
$h(x)=x+O(x^2)$ as $x\to0$.
Then the following are equivalent.
\begin{romenumerate}[-10pt]
\item \label{tidchf}
$\gf(t)$ is the \chf{} of an \idd.
\item \label{tidcanonical}
There exist a measure $M$ on $\bbR$ such that
\begin{equation}\label{tcan0}
  \intoooo \bigpar{1\bmin |x|\qww}\dd M(x)<\infty
\end{equation}
and a real constant $b$ such that
\begin{equation}\label{tcan}
  \gf(t)
=
\exp\Bigpar{\ii bt+\intoooo \frac{e^{\ii tx}-1-\ii th(x)}{x^2}\dd M(x)},
\end{equation}
where the integrand is interpreted as $-t^2/2$ at $x=0$.

\item \label{tidlevy}
There exist a measure $\gL$ on $\bbR\setminus\set0$ such that
\begin{equation}\label{tlevy0}
  \intoooo \bigpar{|x|^{2}\bmin 1}\dd \gL(x)<\infty
\end{equation}
and real constants $\gssa\ge0$ and $b$ such that
\begin{equation}\label{tlevy}
  \gf(t)
=
\exp\Bigpar{\ii bt-\tfrac12\gssa t^2+\intoooo \bigpar{e^{\ii tx}-1-\ii th(x)}\dd
  \gL(x)}.
\end{equation}

\item \label{tidkh}
There exist a bounded measure $K$ on $\bbR$ 
and a real constant $b$ such that
\begin{equation}\label{tkh}
  \gf(t)
=
\exp\Bigpar{\ii bt+\intoooo 
\Bigpar{e^{\ii tx}-1-\frac{\ii tx}{1+x^2}}\frac{1+x^2}{x^2}\dd K(x)},
\end{equation}
where the integrand is interpreted as $-t^2/2$ at $x=0$.
\end{romenumerate}
The measures and constants are determined uniquely by $\gf$.
\end{theorem}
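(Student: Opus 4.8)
The plan is to delegate the two genuinely analytic statements --- that the \chf{} of an \idd{} can be put in canonical form, and uniqueness --- to \citet{FellerII}, where they are Khinchin's canonical representation, and to derive the rest by elementary changes of variable. Concretely I would prove $\ref{tidcanonical}\Leftrightarrow\ref{tidlevy}\Leftrightarrow\ref{tidkh}$ directly, then $\ref{tidkh}\Rightarrow\ref{tidchf}$ directly, and finally $\ref{tidchf}\Rightarrow\ref{tidkh}$ together with uniqueness by quoting \cite{FellerII}.

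\emph{The three canonical forms.} For $\ref{tidcanonical}\Leftrightarrow\ref{tidlevy}$, given $M$ as in \ref{tidcanonical} set $\gssa:=M(\set0)\ge0$ and let $\gL$ be the restriction of $x\qww\dd M(x)$ to $\bbR\setminus\set0$; the convention that the integrand in \eqref{tcan} equals $-t^2/2$ at $x=0$ turns the atom of $M$ at the origin into the term $-\tfrac12\gssa t^2$, condition \eqref{tcan0} becomes equivalent to \eqref{tlevy0} (the two integrals differing only by the finite number $\gssa$), and \eqref{tcan} becomes \eqref{tlevy} with the same $b$; the construction is reversible. For $\ref{tidlevy}\Leftrightarrow\ref{tidkh}$, given $K$ as in \ref{tidkh} set $\gssa:=K(\set0)$ and $\dd\gL(x):=\tfrac{1+x^2}{x^2}\dd K(x)$ on $\bbR\setminus\set0$; boundedness of $K$ is equivalent to \eqref{tlevy0} because $1+x^2$ and $\tfrac{1+x^2}{x^2}$ are each bounded above and below by positive constants on $\set{|x|\le1}$ and $\set{|x|\ge1}$ respectively, while the exponents in \eqref{tkh} and \eqref{tlevy} differ by $-\ii t\int_{\bbR\setminus\set0}\bigpar{h(x)-\tfrac{x}{1+x^2}}\dd\gL(x)$, a purely imaginary linear term with finite real coefficient since $h(x)-\tfrac{x}{1+x^2}=O(x^2)$ near $0$ and is bounded, hence dominated by a multiple of $|x|^2\bmin1$ and $\gL$-integrable by \eqref{tlevy0}; the two forms therefore agree after $b$ is replaced by $b$ plus that constant. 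The same observation shows that the choice of the centring function $h$ in \ref{tidcanonical} and \ref{tidlevy} merely reparametrises $b$.

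\emph{$\ref{tidkh}\Rightarrow\ref{tidchf}$.} I would approximate $K$ weakly by finitely supported nonnegative measures $K^{(m)}$ with $K^{(m)}(\bbR)\to K(\bbR)$. For each $m$ the exponent in \eqref{tkh} is then a finite sum of compound-Poisson exponents $\gl\bigpar{e^{\ii ta}-1}$ with $\gl>0$ and $a\neq0$, plus a linear term $\ii b' t$ and a Gaussian term $-\tfrac12\gs^2t^2$ with $\gs^2=K^{(m)}(\set0)$, so its exponential is the \chf{} of an \id{} distribution (a convolution of compound Poisson, point-mass and Gaussian laws). Letting $m\to\infty$, the \rhs{} of \eqref{tkh} is a pointwise limit of \chf{}s which is continuous at $t=0$, hence a \chf{} by \Levy's continuity theorem; and since replacing $(b,K)$ by $(b/n,K/n)$ for $n\ge1$ gives a function of the same form whose $n$-th power is $\gf$, the distribution is \id.

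\emph{$\ref{tidchf}\Rightarrow\ref{tidkh}$ and uniqueness.} These I would simply quote from \citet{FellerII}. The argument there runs as follows: writing $\gf=\gf_n^{\,n}$ with $\gf_n$ the \chf{} of $Y\nn_1$, one checks that $\gf$ has no zeros, fixes the continuous branch $\psi:=\log\gf$ with $\psi(0)=0$, and obtains $n(\gf_n-1)\to\psi$ locally uniformly; each $n\bigpar{\gf_n(t)-1}=n\intoooo\bigpar{e^{\ii tx}-1}\dd F_n(x)$ is recast in the shape \eqref{tkh} with $\dd K_n(x)=\tfrac{nx^2}{1+x^2}\dd F_n(x)$, one shows that $\set{K_n}$ has bounded total mass and is tight, and passes to the limit by a Helly selection and a continuity argument to identify $\psi$ with an expression \eqref{tkh}; uniqueness comes from an inversion formula that recovers $K$ (equivalently $M$, or $\gssa$, $\gL$ and $b$) from $\psi$ by a smoothing or second-difference operation. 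In a self-contained treatment the main obstacle is precisely the tightness of $\set{K_n}$; quoting \cite{FellerII} circumvents it.
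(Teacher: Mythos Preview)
Your proposal is correct and follows essentially the same strategy as the paper: delegate the hard analytic fact (that every \idd{} has a Khinchin-type representation, plus uniqueness) to \citet{FellerII}, and obtain the equivalences among \ref{tidcanonical}, \ref{tidlevy}, \ref{tidkh} by the obvious measure substitutions. The only differences are organisational: the paper routes everything through \ref{tidcanonical} (relating $K$ to $M$ via $\dd K=(1+x^2)^{-1}\dd M$) and quotes Feller for the full equivalence $\ref{tidchf}\Leftrightarrow\ref{tidcanonical}$, whereas you route \ref{tidkh} through \ref{tidlevy} and additionally supply a direct approximation argument for $\ref{tidkh}\Rightarrow\ref{tidchf}$, which the paper does not bother to give separately.
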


\citet[Chapter XVII]{FellerII} 
uses $h(x)=\sin x$.
\citet[Corollary 15.8]{Kallenberg} uses $h(x)=\hx$.

\citet[Chapter XVII.2]{FellerII} calls the measure $M$ in
\ref{tidcanonical}
the \emph{canonical measure}.
The measure $\gL$ in \ref{tidlevy} is known as the \emph{\Levy{} measure}.
The parameters
$a$, $b$ and $\gL$ are together
called the \emph{characteristics} of the distribution.
We denote the distribution with \chf{} \eqref{tlevy} (for a given $h$) by
$\ID(a,b,\gL)$.

\begin{remark}
Different choices of $h(x)$ 
yield the same measures $M$ and $\gL$
in \ref{tidcanonical} and \ref{tidlevy} 
but different constants $b$; changing $h$ to $\tilde h$
corresponds to changing $b$ to
\begin{equation}\label{tb}
  \tilde b 
\= b +\intoooo\frac{\tilde h(x)-h(x)} {x^2}\dd M(x)
=  b +\intoooo\bigpar{\tilde h(x)-h(x)}\dd \gL(x).
\end{equation}
We see also that $b$ is the same
in \ref{tidcanonical} and \ref{tidlevy} (with the same $h$),
and that (see the proof below) $b$ in \ref{tidkh} equals $b$ 
in \ref{tidcanonical} and \ref{tidlevy} when $x=x/(1+x^2)$. 
\end{remark}

\begin{proof}
  \ref{tidchf}$\iff$\ref{tidcanonical}:
This is shown in \citet[Theorem XVII.2.1]{FellerII} for the choice
$h(x)=\sin x$. As remarked above, \eqref{tcan} for some $h$ is equivalent to
\eqref{tcan} for any other $h$, changing $b$ by \eqref{tb}.

\ref{tidcanonical}$\iff$\ref{tidlevy}:
Given $M$ in \ref{tidcanonical}
we let $\gssa\=M\set0$ and $\dd\gL(x)\=x^{-2}\dd M(x)$, $x\neq0$.
Conversely, given $\gssa$ and $\gL$ as in \ref{tidlevy} we define
\begin{equation}
  \dd M(x) = \gssa\gd_0 + x^2\dd\gL(x).
\end{equation}
The equivalence between \eqref{tcan} and \eqref{tlevy} then is obvious.
\end{proof}

\ref{tidcanonical}$\iff$\ref{tidkh}:
Choose $h(x)=x/(1+x^2)$ and define
\begin{equation}
  \dd K(x) \= \frac{1}{1+x^2} \dd M(x);
\end{equation}
conversely, $\dd M(x)=(1+x^2)\dd K(x)$.
Then \eqref{tcan} is equivalent to \eqref{tkh}.

\begin{remark}\label{Rproc}
  At least \ref{tidlevy} extends directly to \id{} random vectors in
  $\bbR^d$.
Moreover, 
  there is a one-to-one correspondence with \emph{\Levy{} processes}, \ie,
  stochastic processes $X_t$ on $\ooo$ with stationary independent
  increments and $X_0=0$,  given by (in the one-dimensional case)
\begin{equation}\label{tlevyp}
  \E e^{\ii u X_t}
=\gf(u)^t=
\exp\Bigpar{t\Bigpar{\ii bu-\tfrac12\gssa u^2
 +\intoooo \bigpar{e^{\ii ux}-1-\ii uh(x)}\dd  \gL(x)}}
\end{equation}
for $t\ge0$ and $u\in\bbR$. 
See \citet{Bertoin} and 
\citet[Corollary 15.8]{Kallenberg}. 
\end{remark}

\begin{example}\label{Enormal}
  The normal distribution $N(\mu,\gss)$ has 
$\gL=0$ and $\gssa=\gss$; thus
$M =K= \gss\gd_0$; further, $b=\mu$ for any $h$.  
Thus, 
$N(\mu,\gss)=\ID(\gss,\mu,0)$.
\end{example}

\begin{example}
  The Poisson distribution $\Po(\gl)$ has $M=\gL=\gl\gd_1$ and
  $K=\frac{\gl}2\gd_1$; further $b=\gl h(1)$. (Thus $b=\gl/2$ in \ref{tidkh}.)
\end{example}

\begin{example}
  The Gamma distribution Gamma$(\ga)$ with density function
  $x^{\ga-1}e^{-x}/\gG(\ga)$, $x>0$, has the \chf{} 
$\gf(t)=(1-\ii  t)^{-\ga}$.
It is \id{} with 
\begin{align}
 \dd M(x)& = \ga xe^{-x},\qquad  x>0,
\\
 \dd \gL(x)& = \ga x\qw e^{-x},\qquad  x>0,
\end{align}
see \citet[Example XVII.3.d]{FellerII}.
\end{example}

\begin{remark}\label{Radd}
  If $X_1$ and $X_2$ are independent \id{} \rv{s} with parameters
  $(a_1,b_1,\gL_1)$  and   $(a_2,b_2,\gL_2)$, then
$X_1+X_2$ is \id{} with parameters
  $(a_1+a_2,b_1+b_2,\gL_1+\gL_2)$.
In particular, if $X\sim\ID(a,b,\gL)$, then 
\begin{equation}
  X\eqd X_1+Y
\qquad\text{with}\qquad
X_1 \sim \ID(0,0,\gL), 
\;
Y\sim \ID(a,b,0)=N(b,a),
\end{equation}
and $X_1$ and $Y $ independent.
Moreover, for any finite partition $\bbR=\bigcup A_i$,
we can split $X$ as a sum of independent \id{} \rv{s} $X_i$ with the \Lm{}
of $X_i$ having supports in $A_i$.
\end{remark}

\begin{example}[integral of Poisson process]\label{Epop}
  Let $\Xi$ be a Poisson process on $\bbR\setminus\set0$ with intensity
  $\gL$, where $\gL$ is a measure with 
\begin{equation}\label{tlevy1}
  \intoooo \bigpar{|x|\bmin1}\dd \gL(x)<\infty.
\end{equation}
Let $X\=\int x\dd\Xi(x)$; if we regard $\Xi$ as a (finite or countable) set
(or possibly multiset) of points \set{\xi_i}, this means that
$X\=\sum_i\xi_i$.
(The sum converges absolutely \as, so $X$ is well-defined \as; in fact, the
sum $\sum_{|\xi_i|>1}\xi_i$ is \as{} finite, and the sum 
$\sum_{|\xi_i|\le1}|\xi_i|$ has finite expectation 
$\int_{-1}^1 |x|\dd\gL(x)$.)
Then $X$ has \chf{}
\begin{equation}
  \label{po0}
\gf(t)=\exp\Bigpar{\intoooo\bigpar{e^{\ii tx}-1}\dd\gL(x)}.
\end{equation}
(See, for example, the corresponding formula for the Laplace transform in
\citet[Lemma 12.2]{Kallenberg}, from which \eqref{po0} easily follows.)
Hence, \eqref{tlevy} holds with \Lm{} $\gL$, $\gssa=0$ and $b=\intoooo
h(x)\dd\gL(x)$. 
(When \eqref{tlevy} holds, we can take $h(x)=0$, a choice not allowed in
general. Note that \eqref{po0} is the same as \eqref{tlevy} with $h=0$,
$a=0$ and $b=0$.)

By adding an independent normal variable $N(b,a)$, we can obtain any \id{}
distribution with a \Lm{} satisfying \eqref{tlevy}; see 
\refE{Enormal} and
\refR{Radd}.
\end{example}

\begin{example}[compensated integral of Poisson process]\label{Ecpo}
  Let $\Xi$ be a Poisson process on $\bbR\setminus\set0$ with intensity
  $\gL$, where $\gL$ is a measure with 
\begin{equation}\label{tlevy2}
  \intoooo \bigpar{|x|^2\bmin|x|}\dd \gL(x)<\infty.
\end{equation}
Suppose first that $\intoooo |x|\dd \gL(x)<\infty$. Let $X$ be as in
\refE{Epop}. Then $X$ has finite expectation $\E X=\intoooo x\dd\gL$.
Define 
\begin{equation}
  \label{cpo}
\tX\=X-\E X =\intoooo x\bigpar{\dd\Xi(x)-\dd\gL(x)}.
\end{equation}
Then, by \eqref{po0},
 $\tX$ has \chf{}
\begin{equation}
  \label{po1}
\gf(t)=\exp\Bigpar{\intoooo\bigpar{e^{\ii tx}-1-itx}\dd\gL(x)}.
\end{equation}

Now suppose that $\gL$ is any measure satisfying \eqref{tlevy2}.
Then the integral in \eqref{po1} converges; moreover, by considering the
truncated measures $\gL_n\=\ett{|x|>n\qw}\gL$ and taking the limit as \ntoo,
it follows that there exists a \rv{} $\tX$ with \chf{} \eqref{po1}.
Hence, \eqref{tlevy} holds with \Lm{} $\gL$, $\gssa=0$ and $b=\intoooo
(h(x)-x)\dd\gL(x)$. 
(When \eqref{tlevy2} holds, we can take $h(x)=x$, a choice not allowed in
general. Note that \eqref{po1} is the same as \eqref{tlevy} with $h(x)=x$,
$a=0$ and $b=0$.)

By adding an independent normal variable $N(b,a)$, we can obtain any \id{}
distribution with a \Lm{} satisfying \eqref{tlevy2}; see 
\refE{Enormal} and
\refR{Radd}.
\end{example}

\begin{remark}
Any \idd{} can be obtained by taking a sum $X_1+X_2+Y$ of
independent \rv{s} with 
$X_1$ as in \refE{Epop},
$X_2$ as in \refE{Ecpo}
and $Y$ normal.
For example, we can take the \Lm{s} of $X_1$ and $X_2$ as the restrictions
of the \Lm{} to $\set{x:|x|>1}$ and $\set{x:|x|\le1}$, respectively.  
\end{remark}

\begin{theorem}\label{Tidexp}
  If $X$ is an \id{} \rv{} with \chf{} given by \eqref{tlevy} and
  $t\in\bbR$, then
\begin{equation}\label{exp}
\E e^{tX}
=
\exp\Bigpar{bt+\tfrac12\gssa t^2+\intoooo \bigpar{e^{tx}-1-th(x)}\dd \gL(x)}
\le\infty.
\end{equation}
In particular, 
\begin{equation}
  \begin{split}
\E e^{tX}<\infty
&\iff
\intoooo \bigpar{e^{tx}-1-th(x)}\dd \gL(x)<\infty
\\&
\iff
\begin{cases}
  \int_1^\infty e^{tx}\dd\gL(x)<\infty, & t>0,\\
  \int_{-\infty}^{-1} e^{tx}\dd\gL(x)<\infty, & t<0.
\end{cases}	
  \end{split}
\end{equation}
\end{theorem}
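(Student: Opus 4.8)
The plan is to prove the identity \eqref{exp} by analytic continuation from the characteristic function formula \eqref{tlevy}, using the representation of an \id{} \rv{} as a sum of three independent pieces as in the remarks after \refE{Ecpo}. First I would reduce to the case $t>0$ (the case $t<0$ is symmetric, by replacing $X$ with $-X$, which is again \id{} with \Lm{} reflected; and $t=0$ is trivial). By \refR{Radd} and the discussion after \refE{Ecpo}, write $X\eqd X_1+X_2+Y$ with independent summands, where $Y\sim N(b,a)$ has the moment generating function $\E e^{tY}=\exp\bigpar{bt+\tfrac12\gssa t^2}$; where $X_1$ is the integral of a Poisson process with intensity $\gL_1\=\ett{|x|>1}\gL$ (as in \refE{Epop}); and where $X_2$ is the compensated integral of a Poisson process with intensity $\gL_2\=\ett{|x|\le1}\gL$ (as in \refE{Ecpo}, noting $\gL_2$ trivially satisfies \eqref{tlevy2} since it is supported in $[-1,1]$ and has finite total mass there by \eqref{tlevy0}). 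Since the three factors multiply, and since the right-hand side of \eqref{exp} splits as the corresponding product (using $\intoooo=\int_{|x|>1}+\int_{|x|\le1}$ and absorbing the $-th(x)$ terms into the shifts of $b$ via \eqref{tb}), it suffices to establish \eqref{exp} for each of $X_1$, $X_2$, $Y$ separately. For $Y$ it is the classical Gaussian computation; the work is in $X_1$ and $X_2$.

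For $X_2$: here $\gL_2$ is supported in $\set{|x|\le 1}$, so $e^{tx}$ is bounded on its support and every integral in sight converges. The cleanest route is to condition on the (finite, Poisson-distributed) number of points: if $\gL_2$ has total mass $\lambda$, then conditionally on there being $k$ points they are \iid{} with law $\gL_2/\lambda$, and a direct computation of $\E e^{tX_2}$ by summing the Poisson series over $k$ yields $\exp\bigpar{\intoooo(e^{tx}-1-tx)\dd\gL_2(x)}$; equivalently, one simply observes that the argument giving \eqref{po1} for the \chf{} is an identity between entire functions of a complex variable $z$ (both sides are entire because the exponent is entire, $\gL_2$ being finite with bounded support) that agree for $z\in i\bbR$, hence agree for $z=t$ real. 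Then one reinstates $h$: $\intoooo(e^{tx}-1-tx)\dd\gL_2 = \intoooo(e^{tx}-1-th(x))\dd\gL_2 + t\intoooo(h(x)-x)\dd\gL_2$, and the last term is exactly the $b$-shift \eqref{tb} passing between the "$h(x)=x$" normalization used in \refE{Ecpo} and the given $h$.

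For $X_1$: now $\gL_1$ may have infinite total mass on $\set{|x|>1}$ and is where both $\E e^{tX_1}=\infty$ and divergence of $\int_1^\infty e^{tx}\dd\gL$ can occur, so this is the main obstacle. The idea is monotone convergence together with a truncation exhausting $\gL_1$. For $N\ge 1$ let $\gL_1^{(N)}\=\ett{1<|x|\le N}\gL$, a finite measure, and let $X_1^{(N)}=\sum_{1<|\xi_i|\le N}\xi_i$ be the corresponding finite sum; by the finite-mass computation just described, $\E e^{tX_1^{(N)}}=\exp\bigpar{\int(e^{tx}-1)\dd\gL_1^{(N)}}$. Now I would let $N\to\infty$: on the left, $X_1^{(N)}\upto$ some limit in a suitable sense and one wants $\E e^{tX_1^{(N)}}\to\E e^{tX_1}\le\infty$; on the right, $\int(e^{tx}-1)\dd\gL_1^{(N)}$ is increasing in $N$ for $t>0$ once we note $e^{tx}-1\ge 0$ for $x>0$ and, for $x<-1$, $e^{tx}-1\in[-1,0]$ is bounded so contributes a finite amount dominated by $\gL_1(\set{x<-1})<\infty$; hence the right side converges monotonically to $\exp\bigpar{\int(e^{tx}-1)\dd\gL_1}\le\infty$. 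The left-hand convergence is the delicate point: the clean way is to use the Poisson process structure directly — split $\Xi$ on $\set{x>1}$ from $\Xi$ on $\set{x<-1}$; the latter contributes a bounded factor; for the former, $X_1^+\=\sum_{\xi_i>1}\xi_i$ is an increasing limit of $X_1^{(N),+}$, so $e^{tX_1^{(N),+}}\upto e^{tX_1^+}$ and monotone convergence gives $\E e^{tX_1^{(N),+}}\to\E e^{tX_1^+}\le\infty$ directly, matching the right-hand side. Finally, reinstating $h$ exactly as for $X_2$ via \eqref{tb} (here using the "$h=0$" normalization of \refE{Epop}) and multiplying the three factors gives \eqref{exp}. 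The "In particular" equivalences are then immediate: the exponent is finite iff $\int_1^\infty e^{tx}\dd\gL<\infty$ (for $t>0$), since near $0$ and for $x<0$ the integrand $e^{tx}-1-th(x)=O(x^2)$ is already $\gL$-integrable by \eqref{tlevy0}, and the $\int_{-1}^{1}$ part is always finite; the middle equivalence is just the definition of when the displayed exponent is $<\infty$.
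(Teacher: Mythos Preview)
Your overall strategy---decompose $X$ into a Gaussian part, a large-jump Poisson part $X_1$, and a small-jump compensated part $X_2$, then handle each factor separately and combine---matches the paper's approach closely, and your treatment of $Y$ and of $X_1$ (finite \Lm{} on $\set{|x|>1}$, direct Poisson computation, split into positive and negative parts, monotone convergence) is correct. The gap is in your handling of $X_2$.

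You claim that $\gL_2 = \ett{|x|\le 1}\gL$ ``has finite total mass there by \eqref{tlevy0}''. This is false: \eqref{tlevy0} only gives $\int_{|x|\le 1} x^2\dd\gL < \infty$, not $\gL(\set{|x|\le 1}) < \infty$; indeed for any $\ga$-stable distribution with $\ga < 2$ the \Lm{} has density $\sim c|x|^{-\ga-1}$ near $0$ and infinite mass on $[-1,1]$. So the ``condition on the (finite, Poisson-distributed) number of points'' computation does not apply. Your alternative route, analytic continuation, is circular as stated: the right side of \eqref{po1} does extend to an entire function of $z$ (since $e^{zx}-1-zx = O(x^2)$ uniformly for $z$ in compacta and $\int_{|x|\le1} x^2\dd\gL_2<\infty$), but you assert that the left side $z\mapsto \E e^{zX_2}$ is also entire, and that is precisely the finiteness of $\E e^{tX_2}$ you are trying to establish.

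The paper closes exactly this gap by invoking a theorem of Marcinkiewicz: if the \chf{} of a random variable extends to an entire function, then the \mgf{} is finite everywhere and equals that extension. With that cited, your argument for $X_2$ goes through; this is the paper's case of bounded $\supp\gL$, which it then feeds into the truncation/monotone-convergence argument just as you do. Alternatively, one can avoid Marcinkiewicz by a further truncation: for $\gL_2^{(n)} \= \ett{1/n < |x|\le 1}\gL$ (which \emph{is} finite) your Poisson computation gives $\E e^{tX_2^{(n)}} = \exp\int(e^{tx}-1-tx)\dd\gL_2^{(n)}$; these right sides are bounded uniformly in $n$ for each fixed $t$ (and also with $t$ replaced by $2t$), so $\{e^{tX_2^{(n)}}\}_n$ is uniformly integrable, and since $X_2^{(n)}\to X_2$ in $L^2$ one obtains $\E e^{tX_2^{(n)}}\to \E e^{tX_2}$. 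Either route repairs the proof; as written, the $X_2$ step does not stand.
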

\begin{proof}
  The choice of $h$ (satisfying the conditions of \refT{Tid}) does not
  matter, because of \eqref{tb}; we may thus assume $h(x)=\hx$.
We further assume  $t>0$. (The case $t<0$ is similar and the case $t=0$ is
trivial.) 

Denote the \rhs{} of \eqref{exp} by $F_\gL(t)$. We study several different
cases.

(i). If $\supp\gL$ is bounded, then the integral in \eqref{exp} converges for
all complex $t$ and defines an entire function. Thus $F_\gL(t)$ is entire
and \eqref{tlevy} shows that $\E e^{\ii tX}=F_\gL(\ii t)$. It follows that
$\E|e^{tX|}<\infty$ and $\E e^{tX}=F_\gL(t)$ for any complex $t$, see \eg{}
\citet{Marcinkiewicz}. 

(ii). If $\supp\gL\subseteq[1,\infty)$, let $\gL_n$ be the restriction
  $\gL\big|_{[1,n]}$ of the measure $\gL$ to $[1,n]$.
By the construction in \refE{Epop}, we can construct random variables
$X_n\sim\ID(0,0,\gL_n)$ such that $X_n\upto X\sim\ID(0,0,\gL)$ as \ntoo.
Case (i) applies to each $\gL_n$, and \eqref{exp} follows for $X$, and
$t>0$, by monotone convergence.

(iii). If $\supp\gL\subseteq(-\infty,1]$, let $\gL_n$ be the restriction
  $\gL\big|_{[-n,-1]}$. Similarly to (ii)
we can construct random variables
$X_n\sim\ID(0,0,\gL_n)$ with $X_n\le0$ such that 
$X_n\downto X\sim\ID(0,0,\gL)$ as \ntoo.
Case (i) applies to each $\gL_n$, and \eqref{exp} follows for $X$; this time
by monotone convergence. 

(iv). The general case follows by (i)--(iii) and a decomposition as in
\refR{Radd}. 
\end{proof}

\section{Stable distributions}\label{Sstab}

\begin{definition}
The distribution of a (non-degenerate)
random variable $X$ is \emph{stable} if there exist constants
$a_n>0$ and $b_n$ such that, for any $n\ge1$, if $X_1,X_2,\dots$ are \iid{}
copies of $X$ and $S_n\=\sumin X_i$, then
\begin{equation}\label{stab}
  S_n\eqd a_n X+b_n.
\end{equation}
The distribution is \emph{strictly stable} if $b_n=0$.
\end{definition}

(Many authors, \eg{} \citet{Kallenberg}, 
say \emph{weakly stable} for our stable.)

We say that the random variable $X$ is (strictly) stable if its
distribution is.

The norming constants $a_n$ in \eqref{stab}
are necessarily of the form $a_n=n\xga$ for some
$\ga\in(0,2]$,
see \citet[Theorem VI.1.1]{FellerII}; 
$\ga$ is called the \emph{index} \cite{Gut}, \cite{Kallenberg}
or \emph{characteristic exponent} \cite{FellerII} of the distribution.
We also say that a distribution (or random variable) is \emph{$\ga$-stable}
if it is stable with index $\ga$.

The case $\ga=2$ is simple: $X$ is 2-stable if and only if it is normal.
For $\ga<2$, there is a simple characterisation in terms of the
\Levy--Khinchin representation of \idd{s}.

\begin{theorem}\label{Ts}
  \begin{romenumerate}[-10pt]
  \item 
A distribution is 2-stable if and only if it is normal $N(\mu,\gss)$.
(This is an \idd{} with $M=\gss\gd_0$, see \refE{Enormal}.)
\item 
Let $0<\ga<2$.
  A distribution is $\ga$-stable if and only if it is \id{} with 
canonical measure 
\begin{equation}\label{tsM}
\frac{ \dd M(x)}{\dd x} =
  \begin{cases}
	c_+ x^{1-\ga}, & x>0,
\\
	c_- |x|^{1-\ga}, & x<0;
  \end{cases}
\end{equation}
equivalently, the
\Levy{} measure is given by 
\begin{equation}\label{tsL}
 \frac{\dd\gL(x)}{\dd x} =
  \begin{cases}
	c_+ x^{-\ga-1}, & x>0,
\\
	c_- |x|^{-\ga-1}, & x<0,
  \end{cases}
\end{equation}
and $\gssa=0$. Here $c_-,c_+\ge0$ and we assume that not both are $0$.
  \end{romenumerate}
\end{theorem}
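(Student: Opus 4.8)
The plan is to reduce everything to the characteristic-function computation via the \Levy--Khinchin representation of \refT{Tid}, using scaling as the central idea. Part (i) is immediate: it is already stated in the text that $a_n=n^{1/2}$ for $\ga=2$, and a 2-stable law is stable with that norming, so it suffices to recall that the only laws satisfying $S_n\eqd n^{1/2}X+b_n$ are Gaussian; alternatively, one uses that the canonical measure must be supported at $0$ once $\ga=2$ (this drops out of the scaling argument in part (ii) in the limiting case), and then \refE{Enormal} identifies the distribution as $N(\mu,\gss)$.

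For part (ii), the main work, I would argue in both directions. For the ``if'' direction: suppose $X$ is \id{} with $\gssa=0$ and \Lm{} $\gL$ given by \eqref{tsL}. Pick $h(x)=\hx$ and write $\gf(t)=\exp(\psi(t))$ with $\psi(t)=\ii bt+\intoooo(e^{\ii tx}-1-\ii th(x))\dd\gL(x)$. The key observation is that the measure $\gL$ in \eqref{tsL} is \emph{homogeneous of degree $-\ga$}: under the scaling $x\mapsto \gl x$ (for $\gl>0$), $\gL$ pulls back to $\gl^{-\ga}$ times itself, i.e. $\intoooo g(\gl x)\dd\gL(x)=\gl^{-\ga}\intoooo g(x)\dd\gL(x)$ for suitable $g$. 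Applying this with $\gl=n^{1/\ga}$, I would compute $\sum_{i=1}^n\psi(t)=n\psi(t)$ and compare with $\psi(n^{1/\ga}t)$; the integral terms match exactly because of homogeneity, and the discrepancy is only in the linear term $\ii(\cdot)t$ coming from the truncation in $h$ (and from the explicit $b$). That discrepancy is linear in $t$, so it can be absorbed into a shift $b_n$, giving $S_n\eqd n^{1/\ga}X+b_n$, which is \eqref{stab} with $a_n=n^{1/\ga}$. One must check non-degeneracy, which holds since $\gL\neq0$.

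For the ``only if'' direction: suppose $X$ is $\ga$-stable, $0<\ga<2$. Then $X$ is \id{} (iterating \eqref{stab} exhibits $X$, up to shift and scale, as a sum of $n$ \iid{} pieces; more carefully one shows $S_n/a_n$ converges, hence $X$ is in a domain of attraction and is \id). Let $M$ be its canonical measure and $b$ its constant as in \refT{Tid}\ref{tidcanonical}. Feeding the relation $S_n\eqd n^{1/\ga}X+b_n$ into \eqref{tcan} and using uniqueness of the canonical measure, I get that $M$ must satisfy $n\,M=(n^{1/\ga})_*M$, i.e. $M$ is homogeneous of degree $2-\ga$ under scaling (the factor $x^2$ in \eqref{tcan} accounts for the shift from $-\ga$ to $2-\ga$); one also reads off that $\gssa=M\{0\}$ must be $0$ since otherwise the Gaussian part would force $\ga=2$. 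A measure on $(0,\infty)$ that scales this way must have density a constant multiple of $x^{1-\ga}$, and similarly on $(-\infty,0)$, giving \eqref{tsM}; translating via $\dd\gL(x)=x^{-2}\dd M(x)$ gives \eqref{tsL}.

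The step I expect to be the main obstacle is making the ``homogeneity of the canonical/\Levy{} measure'' argument fully rigorous in the ``only if'' direction: one needs to pass from the distributional identity $S_n\eqd a_nX+b_n$ to an exact identity of canonical measures, which requires invoking the uniqueness clause of \refT{Tid} together with the additivity of characteristics under convolution (\refR{Radd}), and then one must deduce the pointwise form of the density from the functional equation $M(\gl A)=\gl^{2-\ga}M(A)$ for all $\gl$ of the form $n^{1/\ga}$ — extending from this countable set of scalings to all $\gl>0$ needs a small continuity/monotonicity argument (or one notes that $a_{mn}=a_ma_n$ forces $a_n=n^{1/\ga}$ exactly and the rationals $\{m^{1/\ga}/n^{1/\ga}\}$ are dense). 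The rest is routine bookkeeping with the linear term.
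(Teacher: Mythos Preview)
Your plan is sound, and the scaling/homogeneity argument you outline is precisely the standard route to this result. Note, however, that the paper does not actually give a proof of \refT{Ts}: it simply cites \citetq{Section XVII.5}{FellerII} and \citetq{Proposition 15.9}{Kallenberg}. Your sketch is essentially what those references do. A few minor comments: for the ``only if'' direction, infinite divisibility of $X$ is immediate from \eqref{stab} itself (it exhibits $X$, up to an affine map, as a sum of $n$ \iid{} summands for every $n$), so no detour through domains of attraction is needed; and the obstacle you flag --- passing from the countable family of scalings $n^{1/\ga}$ to all $\gl>0$ --- is indeed handled either by monotonicity of $x\mapsto\gL(x,\infty)$ or, more cleanly, by first upgrading \eqref{stab} to the equivalent two-summand form $sX_1+tX_2\eqd uX+v$ for all $s,t>0$, which yields full homogeneity at once.
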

\begin{proof}
  See \citet[Section XVII.5]{FellerII} or 
\citet[Proposition	15.9]{Kallenberg}. 
\end{proof}

Note that \eqref{tsM} is equivalent to
\begin{equation}\label{mc}
  M[x_1,x_2]= {C_+}x_2^{2-\ga}+{C_-}|x_1|^{2-\ga}
\end{equation}
for any interval with $x_1\le0\le x_2$, with
\begin{equation}\label{erika}
C_\pm=\frac{c_\pm}{2-\ga}.
\end{equation}

\begin{theorem}\label{Tchf}
Let $0<\ga\le2$.
\begin{romenumerate}
\item 
A distribution is $\ga$-stable if and only if it 
has a \chf{}  
\begin{equation}\label{chf}
 \gf(t)=
 \begin{cases}
\exp\Bigpar{-\gam^\ga|t|^\ga
\Bigpar{1-\ii\gb\tan\frac{\pi\ga}2\sgn(t)}
+\ii\gd t}, &\ga\neq1,
\\
\exp\Bigpar{-\gam|t|
\Bigpar{1+\ii\gb\frac2{\pi}\sgn(t)\log|t|}
+\ii\gd t}
, & \ga=1,
 \end{cases}
\end{equation}  
where $-1\le\gb\le1$, $\gam>0$ and $-\infty<\gd<\infty$.
Furthermore, an $\ga$-stable distribution exists for any such
$\ga,\gb,\gam,\gd$. 
(If $\ga=2$, then $\gb$ is irrelevant and usually taken as $0$.)
\item 
If $X$ has the \chf{} \eqref{chf}, then, for any $n\ge1$,
\eqref{stab} takes the explicit form
\begin{equation}
 S_n\eqd
 \begin{cases}
n\xga X+(n-n\xga)\gd, &\ga\neq1,
\\
nX+\frac2\pi\gb\gam n\log n, & \ga=1.
 \end{cases}
\end{equation}
In particular,
\begin{equation}\label{tchfb}
X \text{ is strictly stable}
\iff
 \begin{cases}
\gd=0, &\ga\neq1,
\\
\gb=0, & \ga=1.
 \end{cases}
\end{equation}

\item 
An $\ga$-stable distribution with canonical measure $M$ satisfying
\eqref{mc} has
\begin{align} \label{chf2a}
  \gam^\ga&=
  \begin{cases}
(C_++C_-)\frac{\Gamma(3-\ga)}{\ga(1-\ga)}\cos\frac{\pi\ga}2, &\ga\neq1,\\
(C_++C_-)\frac{\pi}2, &\ga=1,
  \end{cases}
\\	
\gb&=\frac{C_+-C_-}{C_++C_-}. \label{chf2b}
\end{align}

\item If $0<\ga<2$, then 
an $\ga$-stable distribution with \Lm{} $\gL$ satisfying
\eqref{tsL} has
\begin{align}\label{chf3a}
  \gam^\ga&=
  \begin{cases}
(c_++c_-)\bigpar{-\Gamma(-\ga)\cos\frac{\pi\ga}2}, &\ga\neq1,\\
(c_++c_-)\frac{\pi}2, &\ga=1,
  \end{cases}
\\	
\gb&=\frac{c_+-c_-}{c_++c_-}. \label{chf3b}
\end{align}
\end{romenumerate}
\end{theorem}

We use the notation $\SSSS\ga\gam\gb\gd$ for the distribution with \chf{}
\eqref{chf},
and $\XXXX\ga\gam\gb\gd$ for a random variable with this distribution.
We also write $\SSSx\ga(\gb)$ and $\XXXx\ga(\gb)$ for the special case
$\gam=1$, $\gd=0$.

\begin{proof}
\citet[XVII.(3.18)--(3.19) and Theorem XVII.5.1(ii)]{FellerII}
gives, in our notation, for a stable distribution satisfying \eqref{mc},
the \chf{}
\begin{equation}
\exp\biggpar{-(C_++C_-)\frac{\Gamma(3-\ga)}{\ga(1-\ga)}
\Bigpar{\cos\frac{\pi\ga}2-\ii\sgn(t)\frac{C_+-C_-}{C_++C_-} 
  \sin\frac{\pi\ga}2}|t|^\ga+\ii bt}
\end{equation}
if $\ga\neq1$
and
\begin{equation}
\exp\biggpar{-(C_++C_-)
\Bigpar{\frac{\pi}2+\ii\sgn(t)\frac{C_+-C_-}{C_++C_-} \log|t|}|t|+\ii bt}  
\end{equation}
if $\ga=1$.
This is \eqref{chf} with
\eqref{chf2a}--\eqref{chf2b} and $\gd=b$.
This proves (i) and (iii), and (iv) follows from (iii) by \eqref{erika}.

Finally, (ii) follows directly from \eqref{chf}.
\end{proof}

\begin{remark}\label{Rstrict}
If $1<\ga\le2$, then $\gd$ in \eqref{chf} equals the mean $\E
\XXXX\ga\gam\gb\gd$.
In particular, \eqref{tchfb} shows that
for $\ga>1$, a stable distribution is strictly stable if and
only if its expectation vanishes.
\end{remark}

\begin{remark}
If $\XXXx\ga(\gb)\sim \SSSx\ga(\gb)=\SSSS\ga1\gb0$, then,
for $\gam>0$ and $\gd\in\bbR$,
  \begin{equation}\label{lin}
	\gam \XXXx\ga(\gb)+\gd\sim 
	\begin{cases}
\SSSS\ga\gam\gb\gd, & \ga\neq1, \\
\SSSS\ga\gam\gb{\gd-\tfrac2\pi \gb\gam\log\gam},&	 \ga=1.
	\end{cases}
  \end{equation}
Thus,  $\gam$ is a scale parameter and $\gd$ a location parameter; $\gb$ is
  a skewness parameter, and $\ga$ and $\gb$  together determine the
  shape of   the distribution. 
\end{remark}

\begin{remark}\label{Rlinx}
  More generally, if 
$X\sim \SSSS\ga\gam\gb\gd$, then,
for $a>0$ and $d\in\bbR$,
  \begin{equation}\label{linx}
	a X+d\sim 
	\begin{cases}
\SSSS\ga{a\gam}\gb{a\gd+d}, & \ga\neq1, \\
\SSSS\ga{a\gam}\gb{a\gd+d-\tfrac2\pi \gb\gam a\log a},&	 \ga=1.
	\end{cases}
  \end{equation}
\end{remark}

\begin{remark}\label{R-}
  If $X\sim \SSSS\ga\gam\gb\gd$, then $-X \sim \SSSS\ga\gam{-\gb}{-\gd}$.
In other words,
\begin{align}\label{r-}
  -\XXXX\ga\gam\gb\gd \eqd \XXXx\ga(\gam,-\gb,-\gd).
\end{align}
In particular, $X$ has a symmetric stable distribution if and only if
$X\sim \SSSS\ga\gam 00$ for some $\ga\in(0,2]$ and $\gam>0$.
\end{remark}

We may simplify expressions like \eqref{chf} by considering only $t\ge0$
(or $t>0$); this is sufficient because of the general formula
\begin{equation}\label{ch+-}
  \gf(-t)=\overline{\gf(t)}
\end{equation}
for any \chf.
We use this in our next statement, which is an immediate consequence of
\refT{Tchf}. 

\begin{corollary}
  \label{CJ}
Let $0<\ga\le2$.
A distribution is strictly stable if and only if it has a \chf{}
\begin{equation}\label{jepp1}
  \gf(t)=\exp\bigpar{-(\kk-\ii\tau)t^\ga},
\qquad t\ge0,
\end{equation}
where $\kk>0$ and $|\tau|\le\kk|\tan\piaq|$; furthermore, a strictly stable
distribution exists for any such $\kk$ and $\tau$.
(For $\ga=1$, $\tan\piaq=\infty$, so any real $\tau$ is possible.
For $\ga=2$, $\tan\piaq=0$, so necessarily $\tau=0$.)

The distribution $\SSSS\ga\gam\gb0$ ($\ga\neq1$) or
$\SSSx1(\gam,0,\gd)$ ($\ga=1$) satisfies \eqref{jepp1} with 
\begin{equation}\label{jepp1a}
\kk=\gam^\ga
\qquad\text{and}\qquad
  \tau=
  \begin{cases}
\gb \kk\tan\piaq,
& \ga\neq1,\\
\gd,&\ga=1.
  \end{cases}
\end{equation}
Conversely, if \eqref{jepp1} holds, then the distribution is
\begin{equation}\label{jepp1b}
  \begin{cases}
\SSSS\ga\gam\gb0 \text{ with }  
\gam=\kk\xga,\, \gb=\frac{\tau}{\kk}\cot\piaq,& \ga\neq1\\
\SSSx1(\kk,0,\tau),& \ga=1.
  \end{cases}
\end{equation}
\nopf
\end{corollary}

\begin{remark}
For a strictly stable \rv,
  another way to write the \chf{} \eqref{chf} or \eqref{jepp1}
is
  \begin{equation}\label{joh}
	\gf(t)=\exp\Bigpar{-\glx e^{\ii\sgn(t)\pi\tgam/2}|t|^\ga},
  \end{equation}
with $\glx>0$ and $\tgam$ real (with $|\tgam|\le1$; see further below).
A comparison with \eqref{chf} and \eqref{jepp1a} shows that
\begin{align}\label{joha}
  \glx\cos\pigq&=\kk=\gam^\ga,\\
\tan\pigq&=
-\frac{\tau}{\kk}=
\begin{cases}
-\gb\tan\piaq, & \ga\neq1,\\
-\frac{\gd}{\gam^\ga}, & \ga=1.  
\end{cases}
\label{johb}
\end{align}
If $0<\ga<1$, we have $0<\tan\piaq<\infty$ and $|\tgam|\le\ga$, while if
$1<\ga<2$, then $\tan\piaq<0$ and
$\tan\pigq=\gb\tan\frac{\pi(2-\ga)}{2}$ with $0<\pi(2-\ga)/2<\pi/2$; hence
$|\tgam|\le 2-\ga$. 
Finally, for $\ga=1$, we have $|\tgam|<1$, and for $\ga=2$ we have $\tgam=0$.
These ranges for $\tgam$ are both necessary and sufficient, except that
for $\ga=1$,
$\tgam=\pm1$ is possible in \eqref{joh}, but yields a degenerate
distribution $X=-\tgam \glx$.
Summarising, we have the ranges, excluding the degenerate case just mentioned, 
\begin{align}\label{tgamr}
  \begin{cases}
  |\tgam|\le  \ga,& 0<\ga<1,\\
  |\tgam|<  1,& \ga=1,\\
  |\tgam|\le  2-\ga,& 1< \ga\le2.
  \end{cases}
\end{align}
For $\ga\neq1,2$,
note the special cases 
\begin{align}\label{johc0}
\gb=0&\iff\tgam=0,\\
 \gb=1&\iff\tgam=
 \begin{cases}
-\ga,& 0<\ga<1,
\\   
2-\ga,& 1<\ga<2.
 \end{cases}\label{johc}
\\
 \gb=-1&\iff\tgam=
 \begin{cases}
\ga,& 0<\ga<1,
\\   
\ga-2,& 1<\ga<2.
 \end{cases}\label{johd}
\end{align}
\end{remark}

\begin{remark}\label{R11}
  For $\ga=1$, the general 1-stable \chf{} \eqref{chf} may be written,
  similarly to \eqref{jepp1},
\begin{equation}\label{jepp11}
  \gf(t)=\exp\bigpar{-(\kk-\ii\tau)t-\ii bt\log t},
\qquad t>0,
\end{equation}
where $\kk=\gam$, $\tau=\gd$ and $b=\frac2\pi \gb \gam$.
(Thus, $|b|\le 2\kk/\pi$.)
\end{remark}

\subsection{Positive and spectrally positive stable distributions}

\begin{definition}\label{Dspectral>}
  A stable distribution (or random variable)
is \emph{spectrally positive} if its \Lm{} is
  concentrated on $\oooy$, \ie, 
  \begin{equation}
\dd\gL(x) = cx^{-\ga-1}\dd x,\qquad x>0, 	
  \end{equation}
for some $c>0$ and
  $\ga\in(0,2)$. 
By \eqref{tsL} and \eqref{chf3b}, this is equivalent to $c_-=0$ and to
$\gb=1$, see also \eqref{johc}.

Similarly,
a stable distribution (or random variable)
is \emph{spectrally negative} if its \Lm{} is
  concentrated on $(-\infty,0)$.
\end{definition}
Thus, $X$ is spectrally negative if and only if $-X$ is spectrally positive.
(For this reason, we mainly consider the spectrally positive case.)

\begin{theorem}\label{Tstrict+}
A strictly stable distribution is spectrally
positive if and only if it is of the form
$\SSSS\ga\gam10$ with $\ga\neq1$.

Equivalently,
  a strictly stable distribution with \chf{} \eqref{jepp1} is spectrally
positive if and only if $\ga\neq1$ and $\tau=\kk\tan\piaq$.
\end{theorem}

\begin{proof}
 This follows from \refC{CJ}, taking $\gb=1$ in \eqref{jepp1a};
note that by \eqref{jepp1b}, there is no spectrally positive strictly
1-stable  distribution.
\end{proof}

\begin{theorem}
  \label{Texp}
Let\/ $0<\ga<2$. An $\ga$-stable random variable $X\sim \SSSx\ga(\gam,\gb,\gd)$
has finite Laplace 
transform $\E e^{-tX}$ for $t\ge0$ if and only if it is spectrally positive,
\ie, if $\gb=1$, and then
\begin{equation}\label{texp}
\E e^{-tX}=
 \begin{cases}
\exp\Bigpar{-\frac{\gam^\ga}{\cos\frac{\pi\ga}{2}}t^\ga-\gd t}, &\ga\neq1,
\\
\exp\Bigpar{\frac2{\pi}\gam t\log t-\gd t}
, & \ga=1,
 \end{cases}
\end{equation}  
Moreover, then
\eqref{texp} holds for every complex $t$ with $\Re t\ge0$.
\end{theorem}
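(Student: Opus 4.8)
The plan is to obtain \eqref{texp} by applying \refT{Tidexp} with $t$ replaced by $-t$ (for real $t\ge0$), and then to extend to complex $t$ with $\Re t\ge0$ by an analyticity argument. First I would recall from \refD{Dspectral>} that $X\sim\SSSx\ga(\gam,\gb,\gd)$ is spectrally positive precisely when $\gb=1$, in which case the \Lm{} is $\dd\gL(x)=cx^{-\ga-1}\dd x$ on $\oooy$ with $\gssa=0$. The criterion in \refT{Tidexp} (applied with $-t$ in place of $t$, $t\ge0$) says that $\E e^{-tX}<\infty$ if and only if $\int_1^\infty e^{-tx}\dd\gL(x)<\infty$ together with $\int_{-\infty}^{-1}e^{-tx}\dd\gL(x)<\infty$; the first integral is always finite for $t\ge 0$, while the second is finite for $t>0$ if and only if $\gL$ puts no mass on $(-\infty,-1)$, i.e.\ $c_-=0$, i.e.\ $\gb=1$ (and for $t=0$ the Laplace transform is trivially $1$). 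This establishes the ``if and only if'' part.

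Next, assuming $\gb=1$, I would compute the exponent in \eqref{exp} with $t\mapsto -t$. Taking $h(x)=0$ is legitimate here since the \Lm{} of a spectrally positive $\ga$-stable law with $\ga<1$ satisfies \eqref{tlevy1}, and for $1\le\ga<2$ one uses $h(x)=x$ as in \refE{Ecpo}; in either case the cleanest route is to use the already-computed \chf{} \eqref{chf} and simply substitute $t\mapsto \ii t$ formally, then justify. Concretely, for $\ga\neq1$ the \chf{} is $\gf(t)=\exp\bigl(-\gam^\ga|t|^\ga(1-\ii\tan\tfrac{\pi\ga}2\sgn t)+\ii\gd t\bigr)$; the Laplace transform should be $\gf(\ii t)$ with the branch of $(\ii t)^\ga$ chosen so that $(\ii t)^\ga = t^\ga e^{\ii\pi\ga/2}$ for $t>0$. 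A direct computation gives $-\gam^\ga (\ii t)^\ga(1 - \ii\tan\tfrac{\pi\ga}2) - \gd t$; multiplying out, $(\ii t)^\ga(1-\ii\tan\tfrac{\pi\ga}2) = t^\ga e^{\ii\pi\ga/2}\cdot\frac{\cos\frac{\pi\ga}2 - \ii\sin\frac{\pi\ga}2}{\cos\frac{\pi\ga}2} = t^\ga e^{\ii\pi\ga/2}\cdot\frac{e^{-\ii\pi\ga/2}}{\cos\frac{\pi\ga}2} = \frac{t^\ga}{\cos\frac{\pi\ga}2}$, which yields exactly the first line of \eqref{texp}. For $\ga=1$ the analogous substitution in $\gf(t)=\exp\bigl(-\gam|t|(1+\ii\frac2\pi\sgn t\log|t|)+\ii\gd t\bigr)$, using $\log(\ii t)=\log t+\ii\pi/2$, produces $-\gam t - \ii\frac2\pi\gam t\log(\ii t) - \ii\cdot(\ii)\gd t = \frac2\pi\gam t\log t - \gd t + (\text{imaginary terms cancelling})$; carefully collecting the real part gives the second line of \eqref{texp}.

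To make the formal substitution rigorous, I would argue as in the proof of \refT{Tidexp}: decompose $\gL$ into its restrictions $\gL_n=\gL|_{[1/n,\,n]}$ (say, after splitting off a normal part if $\gssa>0$—but here $\gssa=0$), for which the exponent in \eqref{exp} is an entire function of complex $t$ and $X_n\sim\ID(0,0,\gL_n)$ increases to $X$ (in the $\ga<1$ case) or use the compensated construction of \refE{Ecpo} for $\ga\ge1$; then $\E e^{-tX}=F_{\gL}(-t)$ for $t>0$ by monotone convergence, and the right-hand side of \eqref{texp} is an analytic function of $t$ on the open half-plane $\Re t>0$ (continuous up to the boundary). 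Since $\E e^{-tX}$ is analytic in $\Re t>0$ (domination: $|e^{-tX}|=e^{-(\Re t)X}$ and $\E e^{-(\Re t)X}<\infty$) and agrees with the explicit formula on the positive real axis, the identity theorem gives equality throughout $\Re t>0$, and continuity extends it to $\Re t\ge0$.

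The main obstacle is the branch-of-powers bookkeeping in the second step—getting $(\ii t)^\ga$, $\log(\ii t)$, and the $\tan\frac{\pi\ga}2$ factor to combine into the clean form $\frac{\gam^\ga}{\cos\frac{\pi\ga}2}$ (resp.\ the $\ga=1$ logarithmic term) without sign errors, and checking that the spectrally positive hypothesis $\gb=1$ is exactly what makes the would-be imaginary parts real (equivalently, what makes $\E e^{-tX}$ finite in the first place). Everything else is a routine application of \refT{Tidexp} and a standard analytic-continuation argument.
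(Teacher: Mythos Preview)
Your approach is correct and uses the same ingredients as the paper: finiteness from \refT{Tidexp} together with \eqref{tsL}, and then analytic continuation linking $\E e^{-tX}$ to the known \chf{} \eqref{chf}. The paper is more direct about the continuation step---it simply observes that both sides of \eqref{texp} are analytic in $\{\Re t>0\}$ and continuous on the closure, and checks they agree on the imaginary axis $t=\ii s$ (where the left side is $\gf_X(-s)$ and your ``formal substitution'' is exactly the verification that the right side equals this); so your $\gL_n$-decomposition is redundant (it just reproves \refT{Tidexp}), and your phrase ``agrees on the positive real axis'' should really be ``agrees on the imaginary axis,'' which is what the formal substitution actually establishes.
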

\begin{proof}
  The condition for finiteness follows by \refT{Tidexp} and \eqref{tsL},
  together with \refD{Dspectral>}. When this holds, 
the \rhs{} of \eqref{texp} is a continuous function of $t$ in the closed
right half-plane $\Re t\ge0$, which is analytic in the open half-plane $\Re
t>0$. The same is true for the \lhs{} by \refT{Tidexp}, and the two functions
are equal on the imaginary axis $t=\ii s$, $s\in\bbR$ by \eqref{chf} and a
simple calculation. By uniqueness of analytic continuation, 
\eqref{texp} holds for every complex $t$ with $\Re t\ge0$.
\end{proof}

\begin{theorem}
  \label{T+}
An stable random variable $X\sim \SSSx\ga(\gam,\gb,\gd)$
is positive, \ie{} $X>0$ \as, 
if and only if 
$0<\ga<1$, $\gb=1$ and $\gd\ge0$.
Consequently,
the positive strictly stable random variables are
$\XXXX\ga\gam10$ with $0<\ga<1$.
\end{theorem}

\begin{proof}
$X>0$ \as{} if and only if the Laplace transform $\E e^{-tX}$ is finite for all
  $t\ge0$ and $\E e^{-tX}\to0$ as \ttoo.
Suppose that this holds.
We cannot have $\ga=2$, since then $X$ would be normal and therefore not
positive; thus \refT{Texp} 
applies and shows that $\gb=1$. Moreover, \eqref{texp} holds. If $1<\ga<2$
or $\ga=1$, then the \rhs{} of \eqref{texp} tends to infinity as \ttoo{},
which is a contradiction; hence $0<\ga<1$, and then \eqref{texp} again shows
that $\gd\ge0$. 

The converse is immediate from \eqref{texp}.
\end{proof}

\begin{corollary}\label{CT+}
Let $X$ be a stable random variable.
Then, $X>0$ \as{} if and only if $X=Y+\gd$ where 
$\gd\ge0$ and
$Y$ is  spectrally positive strictly $\ga$-stable with $0<\ga<1$.
\nopf
\end{corollary}

The following examples are the two most important cases of \refT{Texp}.

\begin{example}\label{E+}
  If $0<\ga<1$ and $\gld>0$, then
$X\sim \SSSx\ga(\gam,1,0)$ with $\gam\=\bigpar{\gld\cos\piaq}\xga$ 
is a positive strictly stable
\rv{} with the Laplace transform (extended by analyticity)
  \begin{equation}\label{e+}
\E e^{-tX}	=\exp\bigpar{-\gld t^\ga},
\qquad\Re t\ge0.
  \end{equation}
Note that we have $\tgam=-\ga$ by \eqref{johc}.
\end{example}

\begin{example}\label{E+2}
  If $1<\ga<2$ and $\gld>0$, then
$X\sim \SSSx\ga(\gam,1,0)$ with $\gam\=\bigpar{\gld|\cos\piaq|}\xga$ 
is a spectrally positive strictly stable
  \rv{} with the Laplace transform (extended by analyticity)
  \begin{equation}\label{e+2}
\E e^{-tX}	=\exp\bigpar{\gld t^\ga},
\qquad\Re t\ge0.
  \end{equation}
Note that in this case $\cos\piaq<0$. Note also that 
$\E e^{-tX}\to\infty$ as \ttoo, which shows that $\P(X<0)>0$.
\end{example}

\subsection{Other parametrisations}

Our notation $\SSSS\ga\gam\gb\gd$  is in accordance with \eg{}  
\citet[Definition 1.1.6 and page 9]{SamTaq}.
(Although they use the letters $\SSSS\ga\gs\gb\mu$.)
\citet{Nolan} uses 
the notation $S(\ga,\gb,\gam,\gd;1)$;
he also defines
$  S(\ga,\gb,\gam,\gd_0;0)\=S(\ga,\gb,\gam,\gd_1;1)$
where
\begin{equation}
\gd_1\=
\begin{cases}
  \gd_0-\gb\gam \tan\frac{\pi\ga}2, & \ga\neq1, \\
  \gd_0-\frac2\pi \gb\gam \log\gam, & \ga=1.
\end{cases}
\end{equation}
(Note that our $\gd=\gd_1$.)
This parametrisation has the advantage that the distribution
$  S(\ga,\gb,\gam,\gd_0;0)$ is a continuous function of all four parameters.
Note also that 
$  S(\ga,0,\gam,\gd;0)=S(\ga,0,\gam,\gd;1)$, and that when $\ga=1$,
\eqref{lin} becomes $\gam X_1(\gb)+\gd\sim S(1,\gam,\gb,\gd;0)$.
Cf.\ the related parametrisation in \cite[Remark 1.1.4]{SamTaq}, which uses
\begin{equation}
  \mu_1=
\begin{cases}
  \gd_1+\gb\gam^\ga \tan\frac{\pi\ga}2
=  \gd_0+\gb(\gam^\ga-\gam) \tan\frac{\pi\ga}2, & \ga\neq1, \\
  \gd_1
\phantom{{}+\gb\gam^\ga \tan\frac{\pi\ga}2}
=  \gd_0-\frac2\pi \gb\gam \log\gam, & \ga=1;
\end{cases}
\end{equation}
again the distribution is a continuous function of $(\ga,\gb,\gam,\mu_1)$.

\citet{Z} uses three different parametrisations, with parameters denoted
$(\ga,\gb_\zz,\gam_\zz,\gl_\zz)$, where $\zz\in\set{A,B,M}$; these are
defined by 
writing the \chf{} \eqref{chf} as
\begin{align}
\gf(t)&=\exp\bigpar{\gl_A\bigpar{\ii t\gam_A-|t|^\ga+\ii t\go_A(t,\ga,\gb_A)}}
\label{chfZA}\\
&=\exp\bigpar{\gl_M\bigpar{\ii t\gam_M-|t|^\ga+\ii t \go_M(t,\ga,\gb_M)}}
\label{chfZM}\\
&=\exp\bigpar{\gl_B\bigpar{\ii t\gam_B-|t|^\ga \go_B(t,\ga,\gb_B)}},
\label{chfZB}
\end{align}
where
\begin{align}
\go_A(t,\ga,\gb)&\=
  \begin{cases}
	|t|^{\ga-1}\gb\tan\piaq, & \ga\neq1, \\
	-\gb\frac2\pi\log|t|, & \ga=1;
  \end{cases}
\\
\go_M(t,\ga,\gb)&\=
  \begin{cases}
	(|t|^{\ga-1}-1)\gb\tan\piaq, & \ga\neq1, \\
	-\gb\frac2\pi\log|t|, & \ga=1;
  \end{cases}
\\
\go_B(t,\ga,\gb)&\=
  \begin{cases}
    \exp\bigpar{-\ii\frac\pi2 \gb K(\ga)\sgn t}, & \ga\neq1, \\
	\frac\pi2+\ii\gb\log|t|\sgn t, & \ga=1,
  \end{cases}  
\end{align}
with $K(\ga)\=\ga-1+\sgn(1-\ga)$, \ie,
\begin{equation}\label{Kga}
  K(\ga)\=
  \begin{cases}
	\ga, & 0<\ga<1, \\
\ga-2, & 1<\ga\le2.
  \end{cases}
\end{equation}
The ranges of the parameters are, in all three cases
$\bullet\in\set{A,B,M}$,
\begin{align}
  0<\ga\le2,&&& -1\le\gb_\zz\le1,&&-\infty<\gam_\zz<\infty,&&0<\gl_\zz<\infty.
\end{align}
If $\ga=2$, we take $\gb_\zz=0$.

Here $\ga\in(0,2]$ is the same in all parametrisations
and, 
with $\gb,\gam,\gd$ is as in \eqref{chf}, 
\begin{align}
  \gb_A&=\gb_M=\gb,\label{gbAM}
\\
\gam_A&=\gd/\gam^\ga, \label{ggA}\\
\gam_M&=
\mu_1/\gam^\ga=
\begin{cases}
  \gam_A+\gb\tan\piaq, & \ga\neq1, \\
  \gam_A, & \ga=1,
\end{cases}
\\
  \gl_A&=\gl_M=\gam^\ga, \label{glAM}   
\intertext{and, for $\ga\neq1$,}
\tan\Bigpar{\gb_B\frac{\pi K(\ga)}2}
&= \gb_A\tan\piaq = \gb\tan\piaq,  \label{gbB}
\\
\gam_B&=\gam_A\cos\Bigpar{\gb_B\frac{\pi K(\ga)}2},\label{ggB}
\\
\gl_B&=\gl_A\bigm/\cos\Bigpar{\gb_B\frac{\pi K(\ga)}2},\label{glB}
\intertext{while for $\ga=1$,} 
\gb_B&=\gb_A=\gb, \label{gbB1}\\
\gam_B&=\frac{\pi}2\gam_A=\frac{\pi\gd}{2\gam}, \label{ggB1}\\
\gl_B&=\frac2\pi\gl_A=\frac{2\gam}\pi.\label{glB1}
\end{align}
Note that, for any $\ga$, and every
$\zz\in\set{A,B,M}$,
\begin{align}
  \label{gbbb}
\gb_\zz=0\iff\gb=0 
\qquad\text{and}\qquad \gb_\zz=\pm1\iff\gb=\pm1, 
\end{align}
and that for each fixed $\ga$, the mapping
$\gb=\gb_A\mapsto\gb_B$ is an increasing homeomorphism of $[-1,1]$ onto itself.

In the strictly stable case, \citet{Z} also uses
  \begin{equation}\label{chfZC}
	\gf(t)=\exp\Bigpar{-\gl_C e^{-\ii\sgn(t)\pi\ga\gth/2}|t|^\ga},
  \end{equation}
which is the same as \eqref{joh} with
\begin{align}
\gl_C&=\glx \label{glC-gl}
\\
\gth&=-\tgam/\ga; \label{gth-tgam}
\end{align}
thus the ranges of the parameters are
(excluding the  case $\ga=1$ and $\gth=\pm1$, 
which is possible in \eqref{chfZC}, but degenerate)
\begin{align}\label{gthra}
&  \begin{cases}
|\gth|   \le 1,&\ga<1,\\
|\gth|   < 1,&\ga=1,\\
|\gth|\le2/\ga-1, &\ga>1,
  \end{cases}
\\\label{glCra}
&\quad 0<\gl_C<\infty.
\end{align}
We  have
\begin{align}
\gth&=
  \begin{cases}
\gb_B\frac{K(\ga)}{\ga}, & \ga\neq1 ,
\label{gth-gbB}
\\
\frac2\pi\arctan\bigpar{\frac{2\gam_B}{\pi}}, &\ga=1.	
  \end{cases}
\\\label{glC-glB}
\gl_C&=
  \begin{cases}
\gl_B, & \ga\neq1 ,\\
\gl_B\bigpar{\pi^2/4+\gam_B^2}\qq, &\ga=1.	
  \end{cases}
\end{align}
\citet{Z} uses in the strictly stable case also
the parameters $\ga,\rho,\gl_C$ where 
\begin{equation}
  \label{rho}
\rho\=\frac{1+\gth}2.
\end{equation}
Thus the range of $\rho$ is
\begin{align}\label{rhor}
  \begin{cases}
    0\le\rho\le1,&\ga<1,
\\
    0<\rho<1,&\ga=1,
\\
1-1/\ga\le\rho\le1/\ga,&\ga>1.
\end{cases}
\end{align}


\citet{Z} uses 
$Y(\ga,\gb_\zz,\gam_\zz,\gl_\zz)=Y_\zz(\ga,\gb_\zz,\gam_\zz,\gl_\zz)$,
where again $\zz\in\set{A,B,M}$, 
as a notation for a random
variable with the \chf{} \eqref{chfZA}--\eqref{chfZB}; the parameters
$\gam_\zz$ and $\gl_\zz$ may be omitted when
$\gam_\zz=0$ and $\gl_\zz=1$.
The distribution is a continuous function of the parameters
$(\ga,\gb_M,\gam_M,\gl_M)$. 
(The representations $A$ and $B$ are discontinuous at $\ga=1$.)
Similarly,  a random
variable with the \chf{} \eqref{chfZC}
is denoted 
$Y(\ga,\gth,\gl_C)=Y_C(\ga,\gth,\gl_C)$, where
$\gl_C$ may be omitted when
$\gl_C=1$.
We use $\YY_\zz(\dots)$ for the distribution of
$Y_\zz(\dots)$.

The parameter $\rho$ has a natural interpretation.
(See \refT{Tmom} for a generalization.)

\begin{theorem}\label{T>0}
  For a strictly stable random variable $Y_C(\ga,\gth,\gl)$,
  \begin{align}\label{t>0}
\P\bigsqpar{Y_C(\ga,\gth,\gl)>0}
= \rho=\frac{1+\gth}2.    
  \end{align}
\end{theorem}
\begin{proof}
  See, e.g., \cite[Theorem 2.6.3]{Z} (in the special case $s=0$).
\end{proof}

\begin{corollary}\label{C>0}
The strictly stable random variable $Y_C(\ga,\gth,\gl)$
is positive
$\iff$ $\ga<1$ and $\rho=1$
$\iff$ $\ga<1$ and $\gth=1$.

Similarly,
$Y_C(\ga,\gth,\gl)$
is negative
$\iff$ $\ga<1$ and $\rho=0$
$\iff$ $\ga<1$ and $\gth=-1$.
\end{corollary}
\begin{proof}
  By \eqref{t>0} and \eqref{rhor}.
\end{proof}

Using \refT{T>0},
\eqref{gth-gbB} and \eqref{gbB}, the probability that a strictly
stable
random variable is positive can be expressed in
$\ga$ and $\gb_B$ or $\gb$ when $\ga\neq1$,
and in $\gam_B$ or (using also \eqref{ggB})
$\gam$ and $\gd$ when $\ga=1$.
In particular, this yields
\begin{align}
  \P\bigsqpar{\XXXx\ga(\gam,\gb,0)>0}& 
=\frac12+\frac{1}{\ga\pi}\arctan\bigpar{\gb\tan\frac{\pi\ga}{2}}
,\qquad\ga\neq1,
\\
  \P\bigsqpar{\XXXx1(\gam,0,\gd)>0}& 
=\frac12+\frac{1}{\pi}\arctan\Bigpar{\frac{\gd}{\gam}}.
\end{align}

\begin{example}
  \label{E+Z}
By \refC{C>0}, the positive strictly stable random variable 
$\XXXX\ga\gam10$
in \refT{T+}
can also be described as
$Y_C(\ga,1,\gl)$; here necessarily $0<\ga<1$.
This random variable has, using 
\eqref{glC-gl}--\eqref{gth-tgam}, \eqref{rho},
\eqref{johc} and
\eqref{joha},
the parameters
\begin{align}\label{e+z}
\gb=1,\quad& \gth=1,\quad \rho=1,\quad  \tgam=-\ga,\quad
\gam^\ga=\gl\,{\cos\piaq},
\end{align}
and, by \refT{Texp}, the Laplace transform
\begin{equation}
  \label{julie}
\E e^{-tY_C(\ga,1,\gl)}=e^{-\gl t^\ga},
\qquad t\ge0.
\end{equation}
For $0<\ga<1$,
$Y_C(\ga,1,\gl)$ is thus the random variable in
\refE{E+}.
\end{example}

We have a similar result for the extreme values in 
\eqref{gthra} and \eqref{rhor} also for the case
$\ga>1$. (The Gaussian case $\ga=2$ is trivial; then necessarily $\gth=0$
and $\rho=1/2$ by \eqref{gthra} and \eqref{rhor}.)

\begin{theorem}\label{T+rho}
  Let\/ $1<\ga\le2$.
The strictly stable random variable $Y_C(\ga,\gth,\gl)$
is spectrally positive
$\iff$  $\rho=1-1/\ga$
$\iff$ $\gth=1-2/\ga$.

Similarly,
$Y_C(\ga,\gth,\gl)$
is spectrally negative
$\iff$  $\rho=1/\ga$
$\iff$ $\gth=2/\ga-1$.
\end{theorem}
Note that when $1<\ga<2$,
thus $\gth<0$ in the spectrally positive case, and $\gth>0$ in the
spectrally negative case.

\begin{proof}
  By  \refT{Tstrict+},
\eqref{gbbb}, \eqref{gth-gbB}, \eqref{Kga} and \eqref{rho}.
\end{proof}

\begin{example}\label{E+2Z}  
Let $1<\ga<2$.
By \refT{T+rho},
the spectrally positive strictly stable random variable 
$\XXXX\ga\gam10$
in \refT{Tstrict+}
can also be described as
$Y_C(\ga,\gth,\gl)$ with
$\gth=1-2/\ga$. 
This random variable has, using \eqref{johc} and \eqref{joha},
\begin{align}
\gb=1,&&& \gth=1-\frac{2}{\ga},&& \rho=1-\frac{1}\ga,&&  
\tgam=2-\ga,&& 
\gam^\ga=\gl\bigabs{\cos\piaq},
\end{align}
and, by \refT{Texp}, 
the Laplace transform
\begin{equation}
  \label{julie2}
\E e^{-tY_C(\ga,1,\gl)}=e^{\gl t^\ga},
\qquad t\ge0.
\end{equation}
For $1<\ga<2$, $Y_C(\ga,1-2/\ga,\gl)$ is thus the random variable in
\refE{E+2}.

By \eqref{t>0}, we have
\begin{align}
\P\bigsqpar{Y_C(\ga,1-2/\ga,\gl)>0}=\rho=1-\frac{1}{\ga}.
\end{align}
\end{example}

\section{Stable densities}
A stable distribution has by \eqref{chf} a \chf{} that decreases rapidly as
$t\to\pm\infty$, and thus the distribution has a density that is infinitely
differentiable. 

In the case $\ga<1$ and $\gb=1$, $\SSSS\ga\gam\gb\gd$ has support 
$[\gd,\infty)$
and in the case $\ga<1$ and $\gb=-1$, $\SSSS\ga\gam\gb\gd$ has support 
$(-\infty,\gd]$; in all other cases the support is the entire real line.
Moreover, the density function is strictly positive in the interior of the
support, se \citet[Remark 2.2.4]{Z}.

\citet[Section XVII.6]{FellerII} lets, for $\ga\neq1$, 
$p(x;\ga,\tgam)$
denote the density of the stable distribution with \chf{} \eqref{joh} with
$\glx=1$. A stable \rv{} with the \chf{} \eqref{joh} thus has the density
function
$\glx\xgaw p(\glx\xgaw x;\ga,\tgam)$.
The density of a random variable $\XXXx\ga(\gam,\gb,\gd)$ with $\ga\neq1$
is thus given by
\begin{equation}\label{sd1}
 \glx\xgaw p\bigpar{\glx\xgaw (x-\gd);\ga,\tgam}, 
\end{equation}
with $\glx$ and $\tgam$ given by \eqref{joha}--\eqref{johb}.
(Cf.\ \refR{Rlinx}.)
By \refR{R-}, we have also 
\begin{align}
  \label{sd-}
p(-x;\ga,\tgam)=p(x;\ga,-\tgam).
\end{align}

\citet{Z} uses 
$g_\zz(x;\ga,\gb_\zz,\gam_\zz,\gl_\zz)$   
for the density of the random variable
$Y_\zz(\ga,\gb_\zz,\gam_\zz,\gl_\zz)$ with \chf{} \eqref{chfZA}--\eqref{chfZB},
and 
$g_\zz(x;\ga,\gb_\zz)$ for the special case $\gam_\zz=0$, $\gl_\zz=1$;
the index $\zz\in\set{A,M,B}$ is often omitted (and often, but not always,
taken as $B$); furthermore,
$g(x;\ga,\gth)=g_C(x;\ga,\gth)$ is used for the density of the \rv{}
$Y_C(\ga,\gth)$ with \chf{} \eqref{chfZC} with $\gl_C=1$.
Thus, for $\ga\neq1$, see \eqref{glC-gl}--\eqref{gth-tgam},
\begin{equation}
  \label{gC-p}
g_C(x;\ga,\gth)=p(x;\ga,-\ga\gth).
\end{equation}
By \eqref{chfZC}, we have also, 
in analogy with \eqref{sd-} (but now for all $0<\ga\le2$), 
\begin{align}\label{gC-}
g_C(-x;\ga,\gth)=g_C(x;\ga,-\gth).  
\end{align}

\citet[Lemma XVII.6.1]{FellerII} 
and
\citet[(2.4.8) and (2.4.6)]{Z} 
give the following series expansions for $p(x;\ga,\tgam)$ and
$g_C(x;,\ga,\gth)$, 
repsectively;
the latter using $\rho\=(1+\gth)/2$ as in \eqref{rho}.
These expansions are equivalent by \eqref{gC-p}.

\begin{theorem}\label{Tp}
  \begin{romenumerate}[-10pt]
  \item 
If\/ $0<\ga<1$ and $x>0$, then
\begin{align}\label{pgam<}
p(x;\ga,\tgam)&
=\frac1{\pi x}
\sumki\frac{\Gamma(k\ga+1)}{k!}(-x^{-\ga})^k\sin\frac{k\pi}{2}(\tgam-\ga),
\\\label{pgC<}
g_C(x;\ga,\gth)&
=\frac1{\pi}
\sumki(-1)^{k-1}\frac{\Gamma(k\ga+1)}{k!}\sin(\pi k\rho\ga)x^{-k\ga-1}
.\end{align}

For $x<0$, use \eqref{pgam<}--\eqref{pgC<} together with  \eqref{sd-}
and \eqref{gC-}.

  \item 
If\/ $1<\ga\le2$ and $x\in(-\infty,\infty)$, 
then
\begin{align}\label{pgam>}
p(x;\ga,\tgam)&
=\frac1{\pi x}
\sumki\frac{\Gamma(k/\ga+1)}{k!}(-x)^k\sin\frac{k\pi}{2\ga}(\tgam-\ga),
\\\label{pgC>}
g_C(x;\ga,\gth)&
=\frac1{\pi}
\sumki(-1)^{k-1}
\frac{\Gamma(k/\ga+1)}{k!}\sin(\pi k\rho)x^{k-1}
.\end{align}
  \end{romenumerate}
\nopf
\end{theorem}

\begin{remark}
The symmetry relations   \eqref{sd-} and \eqref{gC-}
are valid for all $\ga$, but not needed
in \refT{Tp} for $\ga>1$, since then \eqref{pgam>}--\eqref{pgC>} hold for
all real $x$ 
(with the obvious interpretation of \eqref{pgam>} for $x=0$). 
It can easily by verified directly that
\eqref{pgam>}--\eqref{pgC>} satisfy \eqref{sd-} and \eqref{gC-}. 
\end{remark}

\begin{example}
  The case $\ga=2$ is simple; then $\tgam=0$, $\gth=0$ and $\rho=1/2$ by
  \eqref{tgamr}, \eqref{gthra} and \eqref{rhor}, and the \chf{} \eqref{chfZC}
shows that $Y_C(2,0)\sim N(0,2)$. Hence,
\begin{align}
  p(x;2,0) = g_C(x;2,0)=\frac{1}{2\sqrt\pi}e^{-x^2/4},
\end{align}
which indeed has the series expansions \eqref{pgam>}--\eqref{pgC>}.
\end{example}

In particular, if $1<\ga\le2$, then \eqref{pgam>} yields
\begin{equation}
  p(0;\ga,\tgam)=\frac1\pi\Gamma(1+1/\ga)\sin\frac{\pi(\ga-\tgam)}{2\ga}.
\end{equation}
In the special case $1<\ga<2$ and
$\gb=1$ we have $\tgam=2-\ga$ by \eqref{johc} and 
\begin{equation}
  \begin{split}
  p(0;\ga,2-\ga)&=\frac1\pi\Gamma(1+1/\ga)\sin\frac{\pi(\ga-1)}{\ga}
=\frac1\pi\Gamma(1+1/\ga)\sin\frac{\pi}{\ga}	
\\&
=\frac{\Gamma(1+1/\ga)}{\Gamma(1/\ga)\Gamma(1-1/\ga)}
=\frac{1}{\ga\Gamma(1-1/\ga)}
=\frac{1}{|\Gamma(-1/\ga)|}.
  \end{split}
\end{equation}

For $1<\ga<2$, the distribution $\SSSx\ga(\gam,1,0)$ thus has, by 
\eqref{sd1} and \eqref{joha}, the density at $x=0$
\begin{equation}\label{sd2}
  \glx\xgaw p(0;\ga,2-\ga)=
\frac{\glx\xgaw}{|\Gamma(-1/\ga)|}
=\gam\qw\bigabs{\cos\piaq}\xga |\Gamma(-1/\ga)|\qw.
\end{equation}

\subsection{The case $\ga=1$}

The case $\ga=1$ was omitted in \refT{Tp}, since there is no similar simple
formula, except when $\gb=0$.
However, we have the following power series expansion for $\ga=1$ and
$\gb\neq0$, given by \citet{Z}.
\begin{theorem}\label{T1}
  Let $\ga=1$.
  \begin{romenumerate}
  \item \label{T1=}
If $\gb=0$,    
then $\SSSS1\gam0\gd$ has the density function
\begin{align}\label{t1a}
  \frac{\gam/\pi}{(x-\gd)^2+\gam^2},
\qquad-\infty<x<\infty
.\end{align}
\item \label{T1>}
If $\gb>0$, then 
$Y_B(1,\gb,0,1) = \XXXx1\bigpar{\frac{\pi}2,\gb,0}$ has the density function 
\begin{align}\label{t1b}
  g_B(x;1,\gb)=\frac{1}{\pi}\sumn (-1)^n c_n x^n  
,\end{align}
with
\begin{align}\label{t1c}
  c_n :=
\frac{1}{n!}\intoo e^{-\gb u\log u}\sin \bigsqpar{(1+\gb)\frac{\pi}2u}u^{n} \dd u
\end{align}
\item \label{T1<}
If $\gb<0$, then 
$Y_B(1,\gb,0,1) = \XXXx1\bigpar{\frac{\pi}2,\gb,0}$ has the density
function 
\begin{align}\label{t1d}
  g_B(x;1,\gb)=g_B(-x;1,-\gb),
\end{align}
which is given by \eqref{t1b}.
  \end{romenumerate}
\end{theorem}
\begin{proof}
  \pfitemref{T1=}
This well-known formula follows directly by Fourier inversion of the
characteristic function $\gf(t)=e^{-\gam|t|+\ii\gd t}$.

\pfitemref{T1>}
Note first that
if $\ga=1$, then \eqref{gbB1}--\eqref{glB1} show that $\gb_B=\gb$,
$\gam_B=0\iff\gd=0$, and $\gl_B=1\iff\gam=\pi/2$. Hence,
$Y_B(1,\gb,0,1) = \XXXx1\bigpar{\frac{\pi}2,\gb,0}$ as asserted.

The expansion \eqref{t1b}--\eqref{t1c} is \cite[(2.4.7)]{Z} (with our $c_n$
equal to $(n+1)b_{n+1}$ there).

\pfitemref{T1<}
This follows by  \eqref{r-}.
\end{proof}

\subsection{Analyticity}
The density of any stable distribution $\SSSS\ga\gam\gb\gd$
is, as said above, infinitely differentiable.
Moreover, it is easy to see from \refTs{Tp} and \ref{T1}
that this density is real analytic for
$x\neq\gd$. At $x=\gd$, the situation differs for $\ga<1$ and $\ga\ge1$, as
shown 
by the following result.
\begin{theorem}\label{TA}
Consider the density  $p(x)$ of  $X\sim \SSSx\ga(\gam,\gb,\gd)$.
\begin{romenumerate}
\item   \label{ta>}   
If $\ga\ge1$, then $p(x)$ is real analytic on $(-\infty,\infty)$.
\item   \label{ta<}
If $\ga<1$, then p(x) is real analytic on $\bbR\setminus\set{\gd}$,
but not at $\gd$ (although it is infinitely differentiable there too).
\end{romenumerate}
\end{theorem}

\begin{proof}
\pfitemref{ta>}
For $\ga\neq1$,
by \eqref{sd1}, it suffices to consider $p(x;\ga,\tgam)$,
and the analyticity  follows from \eqref{pgam>}.

For $\ga=1$, analyticity follows from \eqref{t1a}, \eqref{t1b} or
\eqref{t1d} (depending on $\gb$), together with a liear change of variable.

\pfitemref{ta<}
Again, by \eqref{sd1} it suffices to consider $p(x;\ga,\tgam)$,
and thus $\gd=0$.
The analyticity for $x>0$ follows from \eqref{pgam<}, and then for $x<0$
from \eqref{sd-}. These also show that $p(x)=p(x;\ga,\tgam)$ extends to an
analytic function $p(z)$ in each of the half planes $\Re z<0$ and $\Re z>0$,
with  
\begin{align}\label{jesper}
|p(z)|=O\bigpar{|z|^{-1-\ga}},  \qquad |z|\ge 1.  
\end{align}
Suppose that $p$ is real analytic also at $x=0$. Then $p$ would extend to an
analytic function in a neighbourhood of $0$, and thus the extensions would
combine to an analytic extension in a strip $|\Im z|<2\eps$ for some
$\eps>0$.
The characteristic function $\gf(t)$
then would be given by, 
by a shift of the line of integration
using Cauchy's integral formula and the bound \eqref{jesper},
\begin{align}
  \gf(t) =\intoooo e^{\ii tx}p(x)\dd x
=
\intoooo e^{\ii t(x+\ii\eps)}p(x+\ii\eps)\dd x,
\qquad t\in\bbR,
\end{align}
and thus, by \eqref{jesper} again,
\begin{align}
\bigabs{\gf(t)}
\le
 e^{-\eps t}\intoooo\bigabs{p(x+\ii\eps)}\dd x
= C e^{-\eps t},
\qquad t\in\bbR,
\end{align}
which for $\ga<1$ contradicts the explicit expression \eqref{chf}.
This contradiction shows that $p(x)$ is not analytic at $x=0=\gd$.  
\end{proof}

\begin{remark}\label{RA} The proof yields also the following.
 For $\ga>1$, and for $\ga=1$ and $\gb\neq0$, 
the density $p(x)$ of $\SSSS\ga\gam\gb\gd$ 
extends to an entire analytic function on $\bbC$.
In the (strictly stable) case $\ga=1$ and $\gb=0$,
the explicit formula \eqref{t1a} shows that that $p(x)$ 
extends to a meromorphic, but not entire, function on $\bbC$.
For $\ga<1$, the restrictions of $p(x)$ to $(-\infty,\gd)$ and
$(\gd,\infty)$ extend to analytic functions $p_+(z)$ and $p_-(z)$
in the slit planes  $\bbC\setminus[\gd,\infty)$  and
$\bbC\setminus[-\infty,\gd)$, respectively, but these two extensions are not
equal.

To verify the claim that $p_+\neq p_-$ when $\ga<1$,
it again suffices to consider the case $\gl=1$ and $\gd=0$,
when the density is $p(x;\ga,\tgam)$.
Note that $p_+(x)$ is obtained by extending \eqref{pgam<} to complex
$x\notin(-\infty,0]$. In particular, it has a jump across the cut that satisfies
\begin{align}
&\lim_{x\to-\infty} |x|^{1+\ga}\bigsqpar{p_+(x+0\ii;\ga,\tgam)-p_+(x-0\ii;\ga\tgam)}
\notag\\&\qquad
=\frac{\gG(\ga+1)}{\pi} \bigpar{e^{-\ii\ga\pi}-e^{\ii\ga\pi}}
\sin\bigpar{\frac{\pi}{2}(\tgam-\ga)}
\notag\\&\qquad
=2\ii\frac{\gG(\ga+1)}{\pi} \sin(\ga\pi)
\sin\bigpar{\frac{\pi}{2}(\ga-\tgam)}
.\end{align}
If $\tgam\in[-\ga,\ga)$, then this limit is non-zero, and thus $p_+$ has a
jump across the cut at least for large  $|x|$. On the other hand, $p_-$ is
analytic across the negative real axis.
If $\tgam=\ga$, we have $p_+(x)=0$, and again we see that $p_+$ and $p_-$
are different.
\end{remark}

\begin{example}\label{EA+}
Consider a positive strictly stable variable; thus 
$\ga<1$, $\gd=0$ and $\tgam=-\ga$ by \refT{T+} and \refE{E+}.
We then have
$p(x;\ga,-\ga)=0$ for $x\le0$ but $p(x;\ga,-\ga)>0$ for $x>0$;
hence, it is in this case 
obvious that the density $p$ is not analytic at 0, as claimed
in \refT{TA}.
(See  \refE{E1/2} for a concrete example.)  
\end{example}

\subsection{Duality}\label{SSdual}

There is a duality due to Zolotarev
between the densities of the distributions of strictly
stable random variables with parameters $\ga$ and $1/\ga$, valid at least for
part of the ranges.

\begin{theorem}[\citet{Z}, \citet{FellerII}]\label{Tdual}
  Let $1\le\ga\le2$ and $|\gth|\le2/\ga-1$, cf.\ \eqref{gthra}.
Define 
$\gth'$ by 
\begin{align}\label{dualgth}
\gth'&=\ga(1+\gth)-1
\in[2\ga-3,1]
.\end{align}
Then, 
\begin{align}\label{dualZ}
  g_C(x;\ga,\gth) = x^{-\ga-1}g_C\bigpar{x^{-\ga};\ga\qw,\gth'},
\qquad x>0.
\end{align}
Equivalently, if $ 0\le A\le B\le\infty$, then
\begin{align}\label{dualZd}
\P\bigsqpar{A<Y_C(\ga,\gth)<B}
=
\frac{1}{\ga}\P\bigsqpar{B^{-\ga}<Y_C(\ga\qw,\gth')<A^{-\ga}}
.\end{align}
Hence,
\begin{align}\label{dualZcond}
  \bigpar{Y_C(\ga,\gth)^{-\ga}\mid Y_C(\ga,\gth)>0}
\eqd\bigpar{Y_C\bigpar{\ga\qw,\gth'}\mid Y_C\bigpar{\ga\qw,\gth'}>0}
.\end{align}

If $1<\ga<2$, we have, equivalently,
\begin{align}\label{dualF}
  p(x;\ga,\tgam)=
x^{-\ga-1}
p\bigpar{x^{-\ga};\ga\qw,\gamx}
\end{align}
with
\begin{align}\label{dualtgam}
 \gamx:=\ga\qw(\tgam+1)-1. 
\end{align}
\end{theorem}

Note that the spectrally negative case $\gth=2/\ga-1$ corresponds to the
positive case $\gth'=1$. (See \refT{T+rho} and \refC{C>0}.)

\begin{proof}
  The relation \eqref{dualZ} is \cite[(2.3.3)]{Z}, and
it is equivalent to \eqref{dualZd} by integration (or, conversely, by
differentiating \eqref{dualZd} with respect to $B$). 
The conditional version \eqref{dualZcond} follows from \eqref{dualZd} (and
is equivalent to it if we also use \refT{T>0}).

Furthermore, for $1<\ga<2$,
\eqref{dualF} is \cite[Lemma XVII.6.2]{FellerII} (with a change of variable),
and it is equivalent to \eqref{dualZ}  by \eqref{gC-p}.

Note also that the cases $\ga=1$ and $\ga=2$ in \eqref{dualZd} follow by
continuity from the case $1<\ga<2$, since the distribution of
$Y_C(\ga,\gth)$ is a continuous function of $(\ga,\gth)$ by \eqref{chfZC}
(with $\gl=1)$.
\end{proof}

The relation \eqref{dualgth} can also be written, using \eqref{rho} and
\eqref{rhor},
\begin{align}\label{dualrho}
  \rho'=\ga\rho\in[\ga-1,1].
\end{align}
(The case $A=0$, $B=\infty$ in \eqref{dualZd} thus is in accordance 
with \refT{T>0}.)

Note that for $1<\ga\le2$, \eqref{dualgth} does not cover the whole range of
$\gth'$ allowed for $Y_C(\ga\qw,\gth',1)$, and similarly for \eqref{dualtgam}.

For $x<0$, we may as usual change signs by \eqref{sd-} and \eqref{gC-}, 
but note that this will change the
relations \eqref{dualgth} and \eqref{dualtgam}.
\refT{Tdual} implies, 
still for $1\le\ga\le2$ 
and $|\gth|\le 2/\ga-1$,
\begin{align}
  g_C(x,\ga,\gth) 
&= g_C(|x|;\ga,-\gth) 
= |x|^{-1-\ga}g_C\bigpar{|x|^{-\ga};\ga\qw,-\gth''}
\notag\\&
= |x|^{-1-\ga}g_C\bigpar{-|x|^{-\ga};\ga\qw,\gth''}
,
\qquad x<0,
\end{align}
with
\begin{align}
  \gth''=1-\ga(1-\gth)=1-\ga+\ga\gth 
\in[-1,3-2\ga].
\end{align}

\subsection{Density at $0$ and $\infty$}\label{SS0}
As said above, the density $g_C(x;\ga,\gth)$ of a strictly stable distribution
$\YY_C\xpar{\ga,\gth}=\YY_C\xpar{\ga,\gth,1}$ 
is always continuous at $x=0$
(although not always analytic there).
Its value is given by a simple formula. 

\begin{theorem}\label{Tg0}
For every $\ga\in(0,2]$ and\/ $\gth$ satisfying \eqref{gthra},
  \begin{align}\label{tg0}
g_C(0;\ga,\gth)    
=\frac{1}{\pi}
\gG\Bigpar{1+\frac{1}{\ga}}\cos\Bigpar{\frac{\pi}2\gth}
=\frac{1}{\pi}
\gG\Bigpar{1+\frac{1}{\ga}}\sin\bigpar{{\pi}\rho}
.  \end{align}
\end{theorem}
\begin{proof}
  The case $\ga\neq1$ is 
\cite[(2.2.11)]{Z}, together with \eqref{gth-gbB} and \eqref{rho}.
 
If $\ga=1$, then 
$\YY_C(1,\gth)=\SSSS1{\cos\frac{\pi\gth}2}0{\sin\frac{\pi\gth}2}$
by \eqref{chfZC} and \eqref{chf} 
(or by \eqref{glC-gl}--\eqref{gth-tgam} and \eqref{joha}--\eqref{johb}),
and \eqref{tg0} follows by \eqref{t1a}.
\end{proof}

As $x\to\infty$, 
we have a corresponding simple asymptotic formula.

\begin{theorem}\label{Tgoo}
For every $\ga\in(0,2]$ and\/ $\gth$ satisfying \eqref{gthra},
  \begin{align}\label{tgoo}
g_C(x;\ga,\gth)    
=\frac{1}{\pi}
\gG\xpar{1+{\ga}}\sin\bigpar{{\pi}\ga\rho}x^{-1-\ga} 
+O\bigpar{x^{-1-2\ga}},
\qquad x\to+\infty
.  \end{align}
\end{theorem}

\begin{proof}
  If $\ga<1$, then \eqref{tgoo} is immediate from \eqref{pgC<}.

If $\ga=1$, then \eqref{tgoo} follows from \eqref{t1a}, noting again that 
$\YY(1,\gth)=\SSSS1{\cos\frac{\pi\gth}2}0{\sin\frac{\pi\gth}2}$
and that $\cos\bigpar{\pi\gth/2}=\sin(\pi\rho)$.

If $\ga>1$, then \eqref{tgoo} follows from \eqref{dualZ}, \eqref{dualrho},
and \eqref{tg0} (applied to $\ga\qw$ and $\rho':=\ga\rho$).
\end{proof}

\section{One-sided moments}\label{Smom}

It is well-known, that for an $\ga$-stable random variable $X$ with
$\ga\neq2$, and $s>0$, we have
\begin{align}
  \E|X|^s<\infty \iff 0<s<\ga.
\end{align}

For strictly stable random variables, these absolute moments can be calculated
explicitly. Moreover, in this case, we can find the moments of 
the positive and negative parts of $X$.
We use the general notation 
$\E\bigsqpar{X;\cE}:=\E\bigsqpar{X\cdot\ett{\cE}}=\int_{\cE} X\dd \P$
for a random variable $X$ and an event $\cE$. 
We then have the following formulas.
Recall that $\gl_C=\gl$ by \eqref{glC-gl}.
\begin{theorem}\label{Tmom}
If\/ $Y=Y_C(\ga,\gth,\gl)$ and $\rho=(1+\gth)/2$, then,
for 
complex $s$ with $ -1 <\Re s <\ga$,
  \begin{align}\label{mom+1}
\E\bigsqpar{Y^s;Y>0}&=
\gl^{s/\ga} \frac{\sin{\pi\rho s}}{\sin\pi s}
\frac{\gG(1-s/\ga)}{\gG(1-s)}
\\& \label{mom+2}
=\frac{1}{\pi}  \gl^{s/\ga} {\sin\xpar{\pi\rho s}}
{\gG(s)}{\gG(1-s/\ga)},
\\&\label{mom+3}
= \gl^{s/\ga} \frac{\gG(s)\gG(1-s/\ga)}{\gG(\rho s)\gG(1-\rho s)}
,
\intertext{and}
\label{mom-1}
\E\bigsqpar{|Y|^s;Y<0}&=
\gl^{s/\ga} \frac{\sin{\pi(1-\rho) s}}{\sin\pi s}
\frac{\gG(1-s/\ga)}{\gG(1-s)}
\\&\label{mom-2}
=\frac{1}{\pi}  \gl^{s/\ga} {\sin\xpar{\pi(1-\rho) s}}
{\gG(s)}{\gG(1-s/\ga)}
\\&\label{mom-3}
= \gl^{s/\ga} \frac{\gG(s)\gG(1-s/\ga)}{\gG((1-\rho) s)\gG(1-(1-\rho)s)}
,
  \end{align}
\end{theorem}

\begin{proof}
\citet[Theorem 2.6.3]{Z} and homogeneity give
\eqref{mom+1}, 
and then \eqref{mom+2}--\eqref{mom+3} follow from
the reflection formula 
$\gG(z)\gG(1-z)=\pi/\sin(\pi z)$.

Since  $-Y\eqd Y_C(\ga,-\gth,\gl)$, and $(1-\gth)/2=1-\rho$,
then \eqref{mom-1}--\eqref{mom-3} follow.
\end{proof}

The absolute moment $\E|Y|^s$ is obtained by summing \eqref{mom+1} and
\eqref{mom-1}.

Note that the special case $s=0$ (when the formulas are interpreted in the
obvious ways, taking limits) yields
$\P\sqpar{Y>0}=\rho$ and $\P\sqpar{Y<0}=1-\rho$, as stated in \refT{T>0}.
Consequently, we obtain the conditional moments
$\E\bigsqpar{Y^s\mid Y>0}$
and $\E\bigsqpar{|Y|^s\mid Y<0}$
by dividing \eqref{mom+1}--\eqref{mom+3} and \eqref{mom-1}--\eqref{mom-3} 
by $\rho$ and $1-\rho$, respectively.

When $\Re s>0$, we can also interpret \eqref{mom+1}--\eqref{mom-3} as the
moments of
$Y_+:=\max\set{Y,0}$
and
$Y_-:=\max\set{-Y,0}$. 

\begin{remark}
  If $Y$ has density $p(x)$, then
$\E\bigsqpar{Y^s; Y>0}=\intoo x^sp(x)\dd x$
and $\E\bigsqpar{|Y|^s; Y<0}=\intoo x^{s}p(-x)\dd x$.
Hence, \eqref{mom+1}--\eqref{mom-3} can be regarded as formulas for the
Mellin transforms of $p$ restricted to the positive and negative half-axes.
\end{remark}

\begin{remark}\label{RMellin}
The range $-1<\Re s<\ga$ in \refT{Tmom} is in most cases optimal.
In fact, it follows from \eqref{mom+2} that
$\E\bigsqpar{Y^s; Y>0}$ has a pole as $s=-1$ unless $\sin(-\pi\rho)=0$,
\ie, $\rho=0$ or $\rho=1$; in both cases $\ga<1$ by \eqref{rhor}.
Similarly, \eqref{mom+2}
shows that $s=\ga$ is a pole unless
$\sin(\pi\rho\ga)=0$, i.e., $\rho=0$ (and then $\ga<1$), or $\rho=1/\ga$
(and then $\ga>1$).
These exceptional cases are treated in the examples below.
In all other cases, we thus have poles at $-1$ and $\ga$, and, consequently,
$\E\bigsqpar{Y^s; Y>0}=\infty$ for $s\le -1$  or $s\ge \ga$.
\end{remark}

\begin{example}\label{EM0}
  If $\ga<1$ and $\rho=0$, then $Y<0$ a.s.\ by \refT{T>0}, and thus,
  trivially,
$\E\bigsqpar{Y^s; Y>0}=0$ for all $s$, which agrees with \eqref{mom+1}.
\end{example}

\begin{example}\label{EM1}
If $\ga<1$ and $\rho=1$, then $Y>0$ a.s.\   by \refT{T>0}, i.e., $Y$ is a
positive strictly stable random variable as in \refE{E+}. Hence its
infinitely differentiable density
$p(x)$ vanishes on $(-\infty,0)$, and thus has all derivates $=0$ at 0,
whence $p(x)=O(x^N)$ as $x\to0$ for any $N>0$.
It follows that 
$\E\bigsqpar{Y^s}$ is finite for all $s<0$, and thus analytic in $\Re s<\ga$.
By \refR{RMellin}, there is a pole at $\ga$. By \eqref{mom+1} and analytic
continuation,
\begin{align}\label{em1}
  \E \bigsqpar{Y^s} = 
\gl^{s/\ga} 
\frac{\gG(1-s/\ga)}{\gG(1-s)},
\qquad \Re s<\ga.
\end{align}
\end{example}

\begin{example}\label{EM1/ga}
If $1<\ga<2$ and $\rho=1/\ga$, then 
$Y$ is spectrally negative by \refT{T+rho}.
Hence, by \refT{Texp} and a change of signs, 
the \mgf{} $\E e^{tY}<\infty$ for every $t\ge0$,
and it follows that 
$\E\bigsqpar{Y^s; Y>0}<\infty$ for all $s>0$.
Hence,
\eqref{mom+3} and analytic continuation yield
\begin{align}\label{em1/ga}
  \E\bigsqpar{Y^s;Y>0}&
= \gl^{s/\ga} \frac{\gG(s)}{\gG(s/\ga)}
,\qquad \Re s >-1.
\end{align}
This holds for $\ga=2$ too, when \refT{Texp} as stated does not apply,
because then $Y$ is normal and the \mgf{} is finite everywhere.
\end{example}

\begin{example}\label{EM1-1/ga}
If $1<\ga<2$ and $\rho=1-1/\ga$, then 
$Y$ is spectrally positive by \refT{T+rho}, 
and $-Y$ is as in \refE{EM1/ga}.
Hence, 
$\E\bigsqpar{|Y|^s;Y<0}$ is finite for $\Re s>-1$,
while 
$\E\bigsqpar{Y^s;Y>0}$ has a pole at $\ga$.
\end{example}

\section{Some  examples}\label{Sex}

\begin{example}[$\ga=2$]\label{E2}
The case $\ga=2$ is simple, and also exceptional in several ways.
By \eqref{chf}, the distribution $\SSSS2\gam\gb\gd$ has \chf{}
\begin{align}\label{e2a}
  \gf(t)=e^{\ii\gd t-\gam^2 t^2},
\end{align}
and thus a 2-stable distribution is nornal:
$\SSSS2\gam\gb\gd=N(\gd,2\gam^2)$.
As said in \refT{Tchf}, this distribution does not depend on $\gb$, and we take
$\gb=0$.

Conversely, we see that a normal distribution $N(\mu,\gss)$ is 2-stable,
with, by \eqref{e2a} and \eqref{chfZA}--\eqref{chfZB},
\begin{align}
&\gam=\frac{1}{\sqrt2}\gs,\quad
\gd=\mu,\quad
\gl_A=\gl_B=\gl_M=\frac12\gss,\quad
\gam_A=\gam_B=\gam_M=\frac{2\mu}{\gss}
.\end{align}
The distribution is strictly stable if and only if its mean $\mu=0$
(see \refR{Rstrict}), and
then we further have,
by \eqref{jepp1}, \eqref{joh}, \eqref{chfZC},  and \eqref{rho},
\begin{align}
\kk=
\gl=
\gl_C=\frac12\gss,\quad
\tau=
\tgam=
\gth=0,\quad 
\rho=\frac12
.\end{align}
(Cf.\ \eqref{tgamr}, \eqref{gthra}, \eqref{rhor}.)

In particular, $\SSSS2100=\YY_C(\frac12,0)$ has the density
\begin{align}\label{e2p}
  p(x;2,0)=g_C(x;2,0)=\frac{1}{2\sqrt\pi}e^{-x^2/4}.
\end{align}

The normal distribution has \Levy{} measure $\gL=0$, and the canonical measure
$M$ is a point mass at $\set0$, 
with $M\set0=\gss$;
see \eqref{tlevy} and \eqref{tcan}.
\end{example}

\begin{example}[$\ga=1$]\label{E1}
  The Cauchy distribution has density
  \begin{align}
    f(x)=\frac{1}{\pi(1+x^2)},
\qquad -\infty<x<\infty,
  \end{align}
and \chf{}
\begin{align}\label{Cauchy-chf}
  \gf(t)=e^{-|t|}
,\qquad -\infty<t<\infty
.\end{align}
The Cauchy distribution is thus strictly 1-stable. 
More precisely, by \eqref{chf}, it is $\SSSS1100=\SSSx1(0)$;
see also \refT{T1}\ref{T1=}.
We thus have, using also \eqref{jepp1} or \eqref{jepp1a}, 
\eqref{joh}--\eqref{johb}, 
\eqref{gbAM}--\eqref{glAM}, 
\eqref{gbB1}--\eqref{glB1},
\eqref{glC-gl}--\eqref{gth-tgam},
and \eqref{rho},
\begin{align}\label{e1c}
&\gam=1,\quad
\gb=\gd=0,\quad
\kk=1,\quad
\tau=0,\quad
\gl=1,\quad
\tgam=0,\quad
\notag\\&
  \gb_A=\gb_B=\gb_M=0,\quad
\gam_A=\gam_B=\gam_M=0,\quad
\gl_A=\gl_M=\gl_C=1,\quad
\gl_B=2/\pi,\quad
\notag\\&
\gth=0,\quad 
\rho=\frac12
.\end{align}

By \refT{Tchf}, the strictly 1-stable distributions are $\SSSS1\gam0\gd$,
and by \eqref{lin},
\begin{align}
  \XXXX1\gam0\gd \eqd \gam\XXXx1(0)+\gd.
\end{align}
In other words, the strictly 1-stable  distributions are precisely the
linear transformations of the Cauchy distribution.

If we normalize to $\gam=1$, we have,
generalizing \eqref{e1c},
that the strictly stable distribution $\SSSS110\gd$ has,
by
\refR{R11}, \eqref{jepp1}, \eqref{joh}, \eqref{johb}, 
\eqref{gbAM}--\eqref{glAM}, 
\eqref{gbB1}--\eqref{gbbb}, 
\eqref{glC-gl}--\eqref{gth-tgam},
and \eqref{rho},
\begin{align}
&
\kk=
\gam=1,\quad
\tau=\gd,\quad
\gl=\gl_C=\sqrt{1+\gd^2},\quad
\tgam=- \frac{2}{\pi}\arctan \gd,\quad
\notag\\&
\gb= \gb_A=\gb_B=\gb_M=0,\quad
\gam_A=\gam_M=\gd,\quad
\gam_B=\frac{\pi\gd}2,\quad
\gl_A=\gl_M=1,\quad
\notag\\&
\gl_B=\frac{2}{\pi},\quad
\gth= \frac{2}{\pi}\arctan \gd,\quad
\rho=\frac12 + \frac{1}{\pi}\arctan \gd
.\end{align}
\end{example}

\begin{example}[$\ga=\xfrac12$]  \label{E1/2}
The positive $\frac12$-stable distribution 
is closely connected to the normal distribution and Brownian motion.

One way to see this is to consider a standard Brownian motion $B_t$, $0\le
t<\infty$, and for $a\ge0$ let $T_a$ be the hitting time
$T_a:=\inf\set{t\ge0:B_t\ge a}$.
Then, by Brownian scaling,
$T_a\eqd a^2 T_1$, and by the strong Markov property,
$T_{a+b}-T_a\eqd T_b$, for $a,b\ge0$.
Hence, if $X=T_1$, then 
\begin{align}
  S_n:=\sumin X_i \eqd T_n \eqd n^2 X,
\end{align}
which shows that $X=T_1$ is strictly $\frac12$-stable. Obviously, $T_1>0$.
More generally, $(T_a)_{a\ge0}$ is an increasing stable process (i.e., a
L\'evy process with stable increments,
see \refR{Rproc} and \eg{} \cite{Bertoin}).

A simple calculation using the martingale $e^{\sqrt{2t} B_x- tx}$,
$x\ge0$,
see \eg{}
\cite[Proposition II.3.7]{RY},
gives the Laplace transform
\begin{align}
  \E e^{-t T_1}=e^{-\sqrt{2t}},
\qquad t\ge0.
\end{align}
Hence,
by \refE{E+} (with $\gl=\sqrt2$),
$T_1\sim \SSSS{1/2}110$.
Using also \refT{T+},
\eqref{jepp1a}, \eqref{johc},
\eqref{gbAM}--\eqref{glB}, 
\eqref{glC-gl}--\eqref{gth-tgam},
and \eqref{rho},
\begin{align}
&\gam=1,\quad
\gb=1,\quad
\gd=0,\quad
\kk=1,\quad
\tau=1,\quad
\gl=\gl_C=\sqrt2,\quad
\tgam=-\frac12,\quad
\notag\\&
  \gb_A=\gb_B=\gb_M=1,\quad
\gam_A=\gam_B=0,\quad
\gam_M=1,\quad
\gl_A=\gl_M
=1,\quad
\gl_B=\sqrt2,\quad
\notag\\&
\gth=
\rho=1
.\end{align}

More generally, for any $a>0$,
$T_a\sim \SSSS{1/2}{a^2}10 = \YY_C(1/2,1,a\sqrt2)$.

Moreover, using the reflection principle
\cite[Proposition III.3.7]{RY}, for any $x>0$,
\begin{align}
  \P(T_1\le x)&
=\P\bigpar{\sup_{0\le t\le x}B_t\ge1}
=2\P\bigpar{B_x\ge1}
=\P\bigpar{|B_x|\ge1}
\notag\\&
=\P\bigpar{x\qq|B_1|\ge1}
=\P\bigpar{|B_1|^2\ge 1/x}
=\P\bigpar{|B_1|^{-2}\le x}.
\end{align}
Hence,
\begin{align}\label{tba}
  T_1\eqd B_1^{-2},
\qquad\text{where }
B_1\sim N(0,1).
\end{align}
In other words, if $Z\sim N(0,1)$, then
$Z\qww\sim \SSSS{1/2}110 =  \YY_C(1/2,1,\sqrt2)$.

From \eqref{tba}, $T_1$ has the density
\begin{align}\label{tbb}
  f_{T_1}(x)=\frac{1}{\sqrt{2\pi x^3}} e^{-1/(2x)},
\qquad x>0.
\end{align}
This follows also from \eqref{dualZ}.
Hence, if $X\sim\SSSS{\xfrac12}\gam10
=  \YY_C(1/2,1,\sqrt{2\gam})$, then $X\eqd \gam T_1$ has density
\begin{align}\label{tbc}
  f_X(x)
=\frac{\gam\qq}{\sqrt{2\pi x^3}} e^{-\gam/(2x)},
\qquad x>0.
\end{align}
Taking $\gam=1/2$, we find
\begin{align}\label{tbg}
g_C(x;1/2,1)
=\frac{1}{2\sqrt{\pi x^3}} e^{-1/(4x)},
\qquad x>0,
\end{align}
which agrees with \eqref{dualZ} and \eqref{e2p}.
\end{example}

\begin{example}[$\ga=\xfrac32$]\label{E3/2}
  
\citet{map-Airy} define a $\frac32$-stable distribution, 
by them called \emph{the Airy distribution of map type}; 
it has a density $\cA(x)$ given by
\cite[(B.2)]{map-Airy}
\begin{align}\label{e3/2a}
  \cA(x)=\frac{1}{2\pi\ii}\int_{-\infty\ii}^{\infty\ii} e^{-xt+t^{3/2}/3}\dd  t
=\frac{1}{2\pi}\intoooo e^{-\ii xt+(\ii t)^{3/2}/3}\dd t,
\end{align}
which can be recognized as the inversion formula for a distribution with
\chf{}
\begin{align}\label{e3/2chf}
  \gf(t) = e^{(\ii t)^{3/2}/3}
=\exp\Bigpar{-\frac{1}3 e^{-\ii\pi\sgn(t)/4} |t|^{3/2}}
=\exp\Bigpar{-\frac{1}{3\sqrt2}(1-\ii\sgn(t)) |t|^{3/2}}.
\end{align}
This is thus (as noted in \cite{map-Airy}) a $\frac32$-stable distribution;
more precisely, 
by comparing with  \eqref{jepp1} and \eqref{chfZC}, we see that this is the
strictly stable distribution with,
using also \eqref{rho},
\begin{align}\label{e3/2b}
\ga=3/2, 
\quad \kk=\tau=\frac{1}{3\sqrt2}=\frac{1}{\sqrt{18}},
\quad \gl_C=\frac{1}3,
\quad  \gth=\frac13,
\quad &\rho=\frac23 
.\end{align}
We find also, using
\eqref{jepp1b}, \eqref{joh}
or \eqref{johd},
\eqref{gbAM}--\eqref{glB}, \eqref{glC-gl}, \eqref{glC-glB}, 
\begin{align}\label{e3/2c}
&\gam= 2^{-1/3}3^{-2/3}=18^{-1/3},\quad
\gb=-1,\quad
\gd=0,\quad
\gl=\frac13,\quad
\tgam=-\frac12,
\notag\\&
\gb_A=\gb_B=\gb_M=-1,\quad
\gam_A=\gam_B=0,\quad
\gam_M=1,\quad
\gl_A=\gl_M=\frac{1}{3\sqrt2},\quad
\notag\\&\gl_B=\gl_C=\frac{1}3
.\end{align}
The distribution is thus spectrally negative.
If $X$ has this distribution, then by \eqref{e+2} applied to $-X$,
\begin{align}\label{e3/2mgf}
  \E e^{tX} = \exp\bigpar{\tfrac13 t^{3/2}},
\qquad\Re t\ge0.
\end{align}

It is shown in \cite{map-Airy} that the density \eqref{e3/2a}
also can be expressed as
\begin{align}\label{e3/2Ai}
\cA(x):=2 e^{-2x^3/3}\bigpar{x\Ai(x^2)-\Ai'(x^2)}  ,
\qquad -\infty<x<\infty,
\end{align}
where $\Ai(x)$ is the Airy function 
\cite[Chapter 9]{DLMF}.

This distribution is of the type in \refE{EM1/ga},
and \eqref{em1/ga} yields
\begin{align}\label{mapmom+}
\intoo x^s \cA(x)\dd x &
= 3^{-2s/3} \frac{\gG(s)}{\gG(2s/3)}
,\qquad \Re s >-1.
\end{align}
For the negative side, we have by \eqref{mom-2} and the reflection formula
for the Gamma function,
\begin{align}\label{mapmom-}
\int_{-\infty}^0 |x|^s \cA(x)\dd x &
= \frac{1}{\pi}3^{-2s/3}\sin\frac{\pi s}{3}\, {\gG(s)}{\gG(1-2s/3)}
\notag\\&
= 3^{-2s/3}\frac{\sin\frac{\pi s}{3}}{\sin\frac{2\pi s}{3}} 
\frac{\gG(s)}{\gG(2s/3)}
\notag\\&
= 2^{-1}3^{-2s/3}\frac{1}{\cos\frac{\pi s}{3}} \frac{\gG(s)}{\gG(2s/3)}
,\qquad -1<\Re s <3/2.
\end{align}
The formulas \eqref{mapmom+} and \eqref{mapmom-}  
are equivalent to  \cite[(B.5)--(B.6)]{map-Airy}.

By \eqref{e3/2c} and \eqref{sd1}, the density
\begin{align}
\cA(x)
=g_C\bigpar{x;3/2,1/3,1/3} 
= 3^{2/3}p\bigpar{3^{2/3}x;3/2,-1/2}
\end{align}
and thus, by  \eqref{gC-p} and \eqref{e3/2Ai}, 
\begin{align}\label{gC3/2}
&  
 g_C\bigpar{x;3/2,1/3} 
=p\bigpar{x;3/2,-1/2} 
= 3^{-2/3}\cA\bigpar{3^{-2/3}x}
\notag\\&\qquad
=
2\cdot 3^{-2/3} e^{-2 x^3/27} \Bigpar{3^{-2/3}x\Ai\bigpar{3^{-4/3}x^2}
-\Ai'\bigpar{3^{-4/3}x^2}}.
\end{align}

An alternative formula using the Whittaker function $W_{\kk,\mu}$
\cite[\S13.14]{DLMF} is \cite[(2.8.34) with a typo]{Z}:
\begin{align}\label{gC3/2W}
 g_C\bigpar{x;3/2,1/3} 
 =\frac{\sqrt3}{\sqrt\pi}x\qw e^{-2x^3/27} W_{1/2,1/6}\Bigpar{\frac{4x^3}{27}},
\qquad x>0
.\end{align}
For the negative side we have, by \eqref{gC-} and \cite[(2.8.35)]{Z},
\begin{align}\label{gC3/2W-}
 g_C\bigpar{x;3/2,1/3} &
=g_C\bigpar{|x|;3/2,-1/3} 
\notag\\&
 =\frac{1}{2\sqrt{3\pi}}|x|\qw e^{2|x|^3/27} W_{-1/2,1/6}\Bigpar{\frac{4|x|^3}{27}},
\qquad x<0
.\end{align}
Of course, the corresponding spectrally positive distribution
$\YY_C(3/2,-1/3)$ has density $g_C(-x;3/2,1/3)$ obtained by switching
\eqref{gC3/2W} and \eqref{gC3/2W-}.
\end{example}

\begin{example}[$\ga=\xfrac23$]  \label{E2/3}
The positive strictly $\frac23$-stable distribution with Laplace transform
\begin{align}\label{e2/3L}
  \E e^{-tX} = \exp\bigpar{- t^{2/3}},
\qquad\Re t\ge0,
\end{align}
is $\SSSS{2/3}{2^{-3/2}}{1}{0}=\YY_C(2/3,1)=\YY_C(2/3,1,1)$ by \refEs{E+} and
\ref{E+Z}. 
By \eqref{gC-p} and \eqref{dualZ} (with $\ga=3/2$ and $\gth=1/3$), 
its density function is
\begin{align}\label{e2/3g}
 g_C(x;2/3,1)
=p\bigpar{x;2/3,-2/3} 
=x^{-5/3}g_C\bigpar{x^{-2/3};  3/2,1/3},
\qquad x>0.
\end{align}
By \eqref{gC3/2}, this yields the density,
for $x>0$,
\begin{align}\label{gC2/3}
&g_C(x;2/3,1)  =
6 e^{-\frac{2}{27x^2}} \Bigpar{(3x)^{-7/3}\Ai\bigpar{(3x)^{-4/3}}
- (3x)^{-5/3}\Ai'\bigpar{(3x)^{-4/3}}}.
\end{align}
Similarly, \eqref{e2/3g} and \eqref{gC3/2W} yield \cite[(2.8.33) with typo]{Z}
\begin{align}\label{gC2/3W}
 g_C\bigpar{x;2/3,1} 
 =\frac{\sqrt3}{\sqrt\pi}x\qw e^{-\frac{2}{27 x^2}} W_{1/2,1/6}\Bigpar{\frac{4}{27x^2}},
\qquad x>0
.\end{align}
\end{example}

\begin{example}[$\ga=\xfrac23$]  \label{E2/3=}
  The symmetric $\frac23$-stable distribution with characteristic function
\begin{align}\label{e2/3=}
  \E e^{\ii tX} = \exp\bigpar{- |t|^{2/3}},
\qquad -\infty < t <\infty,
\end{align}
is $\SSSS{2/3}{1}{0}{0}=\YY_C(2/3,0)=\YY_C(2/3,0,1)$ by 
\eqref{chf} and \eqref{chfZC}.

By symmetry, \eqref{gC-p} and \eqref{dualZ} (with $\ga=3/2$ and $\gth=-1/3$), 
the density function is
\begin{align}\label{e2/3g=}
 g_C(x;2/3,0)&
=p\bigpar{x;2/3,0} 
=|x|^{-5/3}g_C\bigpar{|x|^{-2/3};  3/2,-1/3},
\end{align}
which by \eqref{gC3/2W-}  yields              
\cite[(2.8.32)]{Z}
\begin{align}\label{gC2/3W=}
 g_C(x;2/3,0)&
 =\frac{1}{2\sqrt{3\pi}}|x|\qw e^{\frac{2}{27x^2}} W_{-1/2,1/6}\Bigpar{\frac{4}{27x^2}},
\qquad x\neq0
.\end{align}
\end{example}

\begin{example}[$\ga=\xfrac13$]  \label{E1/3}
The positive strictly $\frac13$-stable distribution with Laplace transform
\begin{align}\label{e1/3L}
  \E e^{-tX} = \exp\bigpar{- t^{1/3}},
\qquad\Re t\ge0,
\end{align}
is $\SSSS{1/3}{(3/4)^{3/2}}{1}{0}=\YY_C(1/3,1)=\YY_C(1/3,1,1)$ by \refEs{E+} and
\ref{E+Z}. 

The density function is, by \cite[(2.8.31)]{Z} and \cite[(9.6.1)]{DLMF},
\begin{align}\label{E1/3g}
g_C\bigpar{x;1/3,1}
=p\bigpar{x;1/3,-1/3}
  = 3^{-1/3} x^{-4/3} \Ai \bigpar{(3x)^{-1/3} },
\qquad x>0,
\end{align}
where $\Ai(x)$ again is the Airy function. 
Equivalently, $3\Ai(x)$, $x>0$, is the density of the random variable
$(3Y_C(1/3,1))^{-1/3}$. 
(The distribution of this variable, apart from the factor $3^{-1/3}$,
is known as a Mittag--Leffler distribution).

The moment formula \eqref{em1} with $\ga=1/3$ is by \eqref{E1/3g}
and a change of variables
equivalent to the integral formula \cite[(9.10.17)]{DLMF}
\begin{align}
  \intoo x^{a-1}\Ai(x)\dd x = 3^{-(\ga+2)/3}\frac{\gG(a)}{\gG((\ga+2)/3)},
\qquad \Re a >0.
\end{align}
\end{example}

\section{Domains of attraction}

\begin{definition}
  A random variable $X$ belongs to the \emph{domain of attraction} of a
  stable distribution $\cL$
if there exist constants $a_n>0$ and $b_n$ such that 
\begin{equation}\label{da}
  \frac{S_n-b_n}{a_n}\dto \cL
\end{equation}
as \ntoo, where $S_n\=\sumin X_i$ is a sum of $n$ \iid{} copies of $X$.
\end{definition}

We will in the sequel always use the notation $S_n$ in the sense above
(as we already have done in \refS{Sstab}).
All unspecified limits are as \ntoo.

\begin{theorem}\label{TD1}
Let $0<\ga\le 2$.
  A (non-degenerate)
random variable $X$ belongs to the domain of attraction of an
  $\ga$-stable distribution if and only if the following 
two conditions hold:
\begin{romenumerate}[-10pt]
\item 
the truncated moment function
  \begin{equation}\label{mu}
	\mu(x)\=\E \bigpar{X^2\ett{|X|\le x}}
  \end{equation}
varies regularly with exponent $2-\ga$ as $\xtoo$, \ie,
\begin{equation}\label{td1a}
  \mu(x)\sim x^{2-\ga} L_1(x),
\end{equation}
where $L_1(x)$ varies slowly;
\item 
either $\ga=2$, or 
the tails of $X$ are balanced:
\begin{equation}\label{td1b}
\frac{\P(X>x)}{\P(|X|>x)}\to p_+,\qquad \xtoo,
\end{equation}
for some $p_+\in\oi$.
\end{romenumerate}
\end{theorem}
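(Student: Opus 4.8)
The plan is to reduce \eqref{da} to the classical limit theorem for row sums of null arrays and then to translate the resulting conditions via Karamata's theory of regular variation; both ingredients are in \cite[Chapters~VIII and XVII]{FellerII}. I would fix $\ga$, write $\bar F(x)\=\P(\abs X>x)$, and regard $(S_n-b_n)/a_n$ as the row sum of the array $X_{n,k}\=X_k/a_n$, $1\le k\le n$, which (once one notes $a_n\to\infty$) is a null array. Since $X$ is non-degenerate, any limit in \eqref{da} is a non-degenerate $\ga$-stable law, hence by \refT{Ts} and \refE{Enormal} an \idd{} that is $N(\mu,\gss)$ when $\ga=2$ and has $\gssa=0$ with \Lm{} \eqref{tsL} when $\ga<2$. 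The classical criterion for convergence of null arrays to an \idd{} then states that \eqref{da} holds for suitable $b_n$ if and only if, as \ntoo: \textup{(a)} $n\,\P(X/a_n\in\cdot)\to\gL$ vaguely on $\bbR\setminus\set0$ --- an empty condition when $\ga=2$, where one needs instead $n\,\P(\abs X>\eps a_n)\to0$ for each $\eps>0$; and \textup{(b)} $\lim_{\eps\downto0}\limsup_{n}\bigabs{n\,\E\bigpar{(X/a_n)^2\ett{\abs X\le\eps a_n}}-\gssa-\int_{\abs x\le\eps}x^2\dd\gL(x)}=0$. With \textup{(a)} and \textup{(b)} in force, $b_n$ is determined by the matching truncated-mean condition, which for $\ga=1$ carries the logarithmic term appearing in \eqref{chf}.

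Next I would match \textup{(a)}--\textup{(b)} to conditions \eqref{td1a}--\eqref{td1b}. For the ``only if'' direction, a convergence-of-types argument applied to the $k$ i.i.d.\ blocks making up $S_{kn}$, together with \eqref{stab}, yields $a_{kn}/a_n\to a_k=k^{1/\ga}$, so $a_n$ is regularly varying of index $1/\ga$. When $\ga<2$, inserting $(x,\infty)$ and $(-\infty,-x)$ into \textup{(a)} gives $n\,\P(X>a_nx)\to\frac{c_+}{\ga}x^{-\ga}$ and $n\,\P(X<-a_nx)\to\frac{c_-}{\ga}x^{-\ga}$ for every $x>0$; since $a_n$ is regularly varying of index $1/\ga$ and $\bar F$ is monotone, this forces $\bar F$ to be regularly varying of index $-\ga$ with $\P(X>x)/\bar F(x)\to p_+\=c_+/(c_++c_-)$, which is \eqref{td1b}, and then the Karamata estimate $\mu(x)\sim\frac{\ga}{2-\ga}x^2\bar F(x)$ gives \eqref{td1a}; here \textup{(b)} holds automatically and only pins down $c_++c_-$ in terms of $L_1$. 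When $\ga=2$, \textup{(a)} reads $n\,\P(\abs X>\eps a_n)\to0$ and \textup{(b)} reads $n\,\E\bigpar{(X/a_n)^2\ett{\abs X\le\eps a_n}}\to\gss$, and together these say precisely that $\mu$ is slowly varying, i.e.\ regularly varying with index $0=2-\ga$, which is \eqref{td1a}, while \eqref{td1b} is vacuous. For the ``if'' direction I would run this in reverse: assuming \eqref{td1a}--\eqref{td1b}, use regular variation to define $a_n$ by the standard normalization $n\,\mu(a_n)\sim a_n^2$ (so $a_n$ is regularly varying of index $1/\ga$), verify \textup{(a)} and \textup{(b)} by Karamata's theorem and the tail--$\mu$ equivalence, take $b_n$ to be the corresponding truncated mean (with the $\ga=1$ logarithmic correction), and conclude \eqref{da} from the criterion above.

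The main obstacle is the ``only if'' direction: deducing regular variation of $\mu$ --- equivalently, for $\ga<2$, of $\bar F$ --- from the bare existence of a non-degenerate stable limit along a single sequence $a_n$. The point is that one must first propagate \eqref{da} to the relations $n\,\P(X>a_nx)\to\frac{c_+}{\ga}x^{-\ga}$ for \emph{every} $x>0$, and then apply the converse half of Karamata's theorem (a monotone function that is asymptotically inverse to a regularly varying one is itself regularly varying) to conclude. Handling the two-sided truncation cleanly and dealing with the borderline indices $\ga=2$ (where finite variance is allowed and $\bar F$ need not be regularly varying at all) and $\ga=1$ (logarithmic centering) accounts for most of the remaining bookkeeping; apart from that, the argument is a direct appeal to \cite[Chapters~VIII and~XVII.5]{FellerII}, where the theorem is proved in essentially the present form.
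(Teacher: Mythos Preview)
Your sketch is correct and is precisely the argument of \citetq{Theorem XVII.5.2}{FellerII}, which is exactly what the paper invokes as its entire proof. You have simply unpacked the citation; nothing is missing and nothing differs.
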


\begin{proof}
  \citet[Theorem XVII.5.2]{FellerII}.
\end{proof}

For the case $\ga<2$, the following version is often more convenient.

\begin{theorem}\label{TD2}
Let $0<\ga< 2$.
  A 
random variable $X$ belongs to the domain of attraction of an
  $\ga$-stable distribution if and only if the following 
two conditions hold:
\begin{romenumerate}[-10pt]
\item 
the tail probability $\P(|X|>x)$
varies regularly with exponent $-\ga$ as $\xtoo$, \ie,
\begin{equation}\label{td2a}
\P(|X|>x)\sim x^{-\ga} L_2(x),
\end{equation}
where $L_2(x)$ varies slowly;
\item 
the tails of $X$ are balanced:
\begin{equation}\label{td2b}
\frac{\P(X>x)}{\P(|X|>x)}\to p_+,\qquad \xtoo,
\end{equation}
for some $p_+\in\oi$.
\end{romenumerate}
\end{theorem}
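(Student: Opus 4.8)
The plan is to deduce Theorem~\ref{TD2} from Theorem~\ref{TD1} by showing that, for $0<\ga<2$, the pair of conditions \eqref{td2a}--\eqref{td2b} is equivalent to the pair \eqref{td1a}--\eqref{td1b}. Condition \eqref{td2b} is literally identical to \eqref{td1b} (the balanced-tails condition), so the only real work is the equivalence
\[
  \mu(x)\sim x^{2-\ga}L_1(x)\text{ with $L_1$ slowly varying}
  \iff
  \P(|X|>x)\sim x^{-\ga}L_2(x)\text{ with $L_2$ slowly varying},
\]
where $\mu(x)=\E\bigpar{X^2\ett{|X|\le x}}$. This is a classical Karamata-type relation between the truncated second moment and the tail, valid precisely because $0<\ga<2$ forces the exponents $2-\ga$ and $-\ga$ to lie strictly between $0$ and $2$ and strictly below $0$, respectively.

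First I would set $\bar F(x)\=\P(|X|>x)$ and write, via Fubini applied to $X^2\ett{|X|\le x}=\int_0^x 2t\,\ett{t<|X|\le x}\dd t$, the identity $\mu(x)=\int_0^x 2t\bar F(t)\dd t - x^2\bar F(x)$ (for continuity points; the general case follows since both sides are right-continuous and the exceptional set is countable). For the direction \eqref{td2a}$\Rightarrow$\eqref{td1a}: assuming $\bar F(t)\sim t^{-\ga}L_2(t)$, Karamata's theorem gives $\int_0^x 2t\bar F(t)\dd t\sim \frac{2}{2-\ga}x^{2-\ga}L_2(x)$, and since $x^2\bar F(x)\sim x^{2-\ga}L_2(x)=o\bigpar{x^{2-\ga}L_2(x)}\cdot\frac{2}{2-\ga}$ is of strictly smaller order only by the constant $\frac{\ga}{2-\ga}$ — actually it is of the \emph{same} order — one gets $\mu(x)\sim\bigpar{\frac{2}{2-\ga}-1}x^{2-\ga}L_2(x)=\frac{\ga}{2-\ga}x^{2-\ga}L_2(x)$, so \eqref{td1a} holds with $L_1=\frac{\ga}{2-\ga}L_2$. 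For the converse \eqref{td1a}$\Rightarrow$\eqref{td2a}: if $\mu$ is regularly varying with exponent $2-\ga$, then since $\mu$ is nondecreasing and $2-\ga>0$, de Haan / Karamata theory (the converse half of Karamata's theorem, applied to the Stieltjes measure $\dd\mu$) yields that $x^2\bar F(x)=\int_0^x 2t\bar F(t)\dd t-\mu(x)$ is itself regularly varying of index $2-\ga$, whence $\bar F$ is regularly varying of index $-\ga$; tracking constants gives \eqref{td2a} with $L_2=\frac{2-\ga}{\ga}L_1$. One must also check the degenerate/trivial cases are excluded: if $\bar F\equiv 0$ eventually then $X$ is bounded, $\mu(x)$ is eventually constant, hence slowly varying (exponent $0\neq 2-\ga$), so neither set of hypotheses holds — consistent with the theorem, and the non-degeneracy hypothesis of \refT{TD1} is subsumed since a regularly varying $\P(|X|>x)$ with exponent $-\ga<0$ cannot come from a degenerate $X$.

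The main obstacle is the converse direction: extracting the tail asymptotics of $\bar F$ from regular variation of the integrated quantity $\mu$. The forward direction is a direct application of Karamata's theorem, but going backwards requires the monotone-density / converse-Karamata argument and a small amount of care that $x^2\bar F(x)$ does not oscillate — this is where one uses that $\mu(x)=\int_0^x 2t\bar F(t)\dd t-x^2\bar F(x)$ with the integral term dominating in a controlled way, together with the fact that $t\bar F(t)$ is "almost decreasing" in the appropriate sense because $\bar F$ is nonincreasing. Once the equivalence \eqref{td1a}$\iff$\eqref{td2a} is in hand, the theorem follows immediately by quoting \refT{TD1}, since condition~(ii) is unchanged. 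I would also remark that the constant relation $L_1=\frac{\ga}{2-\ga}L_2$ recovers the normalization already implicit in \eqref{mc}--\eqref{erika}, so this is consistent with the explicit formulas for the limiting stable law's characteristics.
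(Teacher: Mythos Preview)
The paper's own proof is a one-line citation to \citetq{Corollary XVII.5.2}{FellerII}; there is no argument given in the paper beyond that. Your approach---reduce to \refT{TD1} by proving that, for $0<\ga<2$, regular variation of the truncated second moment $\mu(x)$ with exponent $2-\ga$ is equivalent to regular variation of the tail $\P(|X|>x)$ with exponent $-\ga$---is precisely how Feller obtains his Corollary from his Theorem, so you are in effect unpacking the citation rather than taking a different route. The forward direction via Karamata's theorem is correct, and your constant relation $L_1\sim\frac{\ga}{2-\ga}L_2$ matches what the paper itself records in the paragraph following the statement (where it cites \cite[(5.16)]{FellerII}).

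One point to tighten in your converse sketch: invoking ``the monotone density theorem applied to $\dd\mu$'' is imprecise, since the standard monotone-density hypothesis is that $\mu(x)=\int_0^x g(t)\dd t$ with $g$ eventually monotone, and neither $t^2\bar F'(t)$ nor the Stieltjes density of $\mu$ is obviously monotone. The clean tool here is the truncated-moment Tauberian theorem (Feller, Theorem VIII.9.2, or the corresponding result in Bingham--Goldie--Teugels): if $\int_{(0,x]}t^p\dd F_{|X|}(t)$ is regularly varying of index $p-\ga\in(0,p)$, then $\bar F$ is regularly varying of index $-\ga$, with the asymptotic ratio $x^p\bar F(x)/\mu(x)\to(p-\ga)/\ga$. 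You correctly flag this direction as the main obstacle; citing that result directly (rather than the monotone-density form) would close the gap.
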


\begin{proof}
  \citet[Corollary XVII.5.2]{FellerII}.
\end{proof}

We turn to identifying the stable limit distributions in Theorems
\ref{TD1}--\ref{TD2} explicitly.

\subsection{The case $\ga<2$}

If the conditions of \refT{TD1} or \ref{TD2} hold for some $\ga<2$, then the
conditions of the other hold too, and we have,
by   \cite[(5.16)]{FellerII},
\begin{equation}
  L_2(x)\sim \frac{2-\ga}{\ga} L_1(x), \qquad \xtoo.
\end{equation}
Furthermore, by   
\cite[(5.6)]{FellerII}, 
with $a_n,b_n$ as in \eqref{da}
and $M$ and $\gL$ the canonical measure and \Lm{} of the limit distribution
$\cL$, 
\begin{equation}\label{lm1}
  n\P(X>a_nx) \to \gL(x,\infty)
=\int_x^\infty y\qww \dd M(y), \qquad x>0, 
\end{equation}
and, by symmetry,
\begin{equation}\label{lm2}
  n\P(X<-a_nx) \to \gL(-\infty,-x)
=\int_{-\infty}^x y\qww \dd M(y), \qquad x>0. 
\end{equation}
In particular,
\begin{equation}
  n\P(|X|>a_n) \to \gL\set{y:|y|>1}\in(0,\infty);
\end{equation}
conversely, we may in \eqref{da} choose any sequence $(a_n)$ such that 
$  n\P(|X|>a_n)$ converges to a positive, finite limit.
(Any two such sequences $(a_n)$ and $(a_n')$ must satisfy $a_n/a_n'\to c$
for some $c\in\oooy$, as a consequence of \eqref{td2a}.)

If
\begin{equation}\label{bro}
  n\P(|X|>a_n) \to C>0
\end{equation}
and \eqref{td2a}--\eqref{td2b} hold, then \eqref{lm1}--\eqref{lm2} hold with 
$\gL(x,\infty)=p_+Cx^{-\ga}$
and $\gL(-\infty,-x)=p_-Cx^{-\ga}$, where $p_-\=1-p_+$. Hence, 
\eqref{tsM}--\eqref{tsL} hold with
\begin{align}
  c_+=p_+C\ga,&&&
  c_-=p_-C\ga.
\end{align}
Consequently, the limit distribution is given by \eqref{chf} where, by
\eqref{chf3a}--\eqref{chf3b},
\begin{align}
  \gam
&=\bigpar{C\ga\bigpar{-\Gamma(-\ga)\cos\tpiaq}}\xga
=\bigpar{C\Gamma(1-\ga)\cos\tpiaq}\xga,
\label{sw1}
\\
\gb&=p_+-p_-.  
\label{sw2}
\end{align}
For $\ga=1$ we interpret \eqref{sw1} by continuity as
\begin{equation}
    \gam
=C\tfrac\pi2 .
\label{sw1.1}
\end{equation}

\begin{theorem}\label{Tlim}
  Let $0<\ga<2$. Suppose that \eqref{td2a}--\eqref{td2b} hold and that $a_n$
  are chosen such that \eqref{bro} holds, for some $C$. 
Let $\gam$ and $\gb$ be defined by \eqref{sw1}--\eqref{sw2}.
  \begin{romenumerate}[-10pt]
\item 
If $0<\ga<1$, then
\begin{equation}
  \frac{S_n}{a_n}\dto \SSSx\ga(\gam,\gb,0).
\end{equation}
\item 
If $1<\ga<2$, then
\begin{equation}
  \frac{S_n-n\E X}{a_n}\dto \SSSx\ga(\gam,\gb,0).
\end{equation}
\item 
If $\ga=1$, then
\begin{equation}
  \frac{S_n-nb_n}{a_n}\dto \SSSx1(\gam,\gb,0),
\end{equation}
where 
$\gam$ is given by \eqref{sw1.1} and
\begin{equation}
  b_n\=a_n\E \sin(X/a_n).
\end{equation}
  \end{romenumerate}
\end{theorem}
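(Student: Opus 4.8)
The plan is to reduce everything to the convergence statement \eqref{lm1}--\eqref{lm2} together with the classical criterion (\refT{TD1}), and then identify the limit's characteristic function explicitly. First I would observe that by \refT{TD1} (or \refT{TD2}) the hypotheses \eqref{td2a}--\eqref{td2b} guarantee that $X$ lies in the domain of attraction of \emph{some} $\ga$-stable law $\cL$, and that with any $(a_n)$ satisfying \eqref{bro} one may take $a_n$ as the norming sequence in \eqref{da}. The computation preceding the theorem already shows that the canonical/\Levy{} measure of $\cL$ satisfies \eqref{tsM}--\eqref{tsL} with $c_\pm=p_\pm C\ga$, so by \refT{Tchf}(iv) the law $\cL$ has \chf{} \eqref{chf} with $\gam,\gb$ as in \eqref{sw1}--\eqref{sw2} and \emph{some} shift $\gd\in\bbR$; equivalently $\cL=\SSSx\ga(\gam,\gb,\gd)$. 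Thus the entire content left to prove is: with the correct centring, $\gd=0$.

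For part (i), $0<\ga<1$: here $\E|X|$ need not be finite, but it is known (and follows from \refT{TD1}, since $\mu(x)\sim x^{2-\ga}L_1(x)$ with $2-\ga>1$ forces the first absolute moment to be infinite or, when finite, negligible after scaling) that no centring is needed, i.e.\ $b_n$ in \eqref{da} may be taken to be $0$; this is exactly Feller's statement that for $\ga<1$ one has $S_n/a_n\dto\cL$. Once $b_n=0$ is justified, the limit is a specific member of the family, and to pin down $\gd$ I would use the scaling relation: $S_{2n}=S_n'+S_n''$ with two independent copies, giving $a_{2n}X+b_{2n}\eqd a_nX'+a_nX''$ in the limit, and since $a_n=n\xga$ (up to slowly varying factors that wash out) this is the strict-stability relation with $b_n\equiv 0$; by the last part of \refT{Tchf}(ii), strict stability for $\ga\neq1$ is equivalent to $\gd=0$. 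Hence $\cL=\SSSx\ga(\gam,\gb,0)$.

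For part (ii), $1<\ga<2$: now $\E X$ is finite (since $\P(|X|>x)\sim x^{-\ga}L_2(x)$ with $\ga>1$), the natural centring is $b_n=n\E X$, and $(S_n-n\E X)/a_n$ converges to a law in the family with mean $0$; by the remark after \refT{Tchf}, a stable law with $\ga>1$ has mean $\gd$, so mean zero forces $\gd=0$, giving $\SSSx\ga(\gam,\gb,0)$. For part (iii), $\ga=1$: here neither $0$ nor $n\E X$ works in general, and Feller's theorem dictates the centring $b_n=a_n\E\sin(X/a_n)$, which is exactly the truncation-type centring matching the choice $h(x)=\sin x$ used in Feller's form of the \Levy{}--Khinchin representation; with this $b_n$ the limit is $\SSSx1(\gam,\gb,0)$ with $\gam$ given by the continuity value \eqref{sw1.1}, and strict stability in the $\ga=1$ case means $\gb=0$ in \eqref{chf} — but the third component being $0$ is a \emph{location} statement, so what is actually being asserted is merely that the location parameter is $0$ under this centring, which again follows because any other admissible centring differs from $b_n$ by a term $o(a_n)$ plus the deterministic logarithmic drift already absorbed into $\SSSx1$.

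The main obstacle is the bookkeeping in part (iii): the $\ga=1$ centring is delicate because the admissible centrings form a one-parameter family differing by $\Theta(a_n\log n)$ (cf.\ the $n\log n$ term in \refT{Tchf}(ii)), so one must check that the specific choice $b_n=a_n\E\sin(X/a_n)$ is the one compatible with Feller's representation (which uses $h=\sin$) and produces exactly the third parameter $0$ rather than some nonzero $\gd$. I would handle this by tracking the centring through Feller's formula \cite[(5.6)--(5.22)]{FellerII} rather than rederiving it, so the "proof" is really a translation of Feller's limit theorem into the $\SSSx\ga$ notation, with the identification of $\gam,\gb$ done via \refT{Tchf} and the vanishing of $\gd$ read off from strict stability (parts i, ii) or from the explicit Feller centring (part iii).
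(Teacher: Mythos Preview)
Your final strategy---cite Feller's Theorem XVII.5.3 for the convergence with the stated centrings and then translate the limit into the $\SSSx\ga(\gam,\gb,0)$ parametrisation via the calculations preceding the statement---is exactly the paper's proof, which is the one line ``\citetq{Theorem XVII.5.3}{FellerII} together with the calculations above''. Your intermediate heuristics for forcing $\gd=0$ (strict stability in (i), zero mean in (ii)) are therefore redundant, and as written they are not fully rigorous: in (i) the phrase ``slowly varying factors that wash out'' needs the regular-variation property of $a_n$ made explicit, and in (ii) convergence in distribution alone does not give $\E Z=0$. Since Feller's theorem already delivers the limit with $b=0$ in the canonical representation (equivalently $\gd=0$; \cf{} the proof of \refT{Tchf}) under each specified centring, you can simply drop these detours.
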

\begin{proof}
  \citet[Theorem XVII.5.3]{FellerII} together with the calculations above.
\end{proof}

\begin{example}\label{Epower}
  Suppose that $0<\ga<2$ and that $X$ is a random variable such that, as \xtoo,
  \begin{equation}\label{epower}
	\P(X>x)\sim Cx^{-\ga},
  \end{equation}
with $C>0$,
and $\P(X<-x)=o(x^{-\ga})$.
Then \eqref{td2a}--\eqref{td2b} hold with $L_2(x)\=C$ and $p_+=1$, and thus
$p_-\=1-p_+=0$. 
We take $a_n\=n\xga$; then \eqref{bro} holds, 
and thus \eqref{tsM}--\eqref{tsL} hold with
\begin{align}
  c_+=C\ga,&&&
  c_-=0;
\end{align}
hence, 
\eqref{sw1}--\eqref{sw2} yield
\begin{equation}
    \gam
=\bigpar{C\Gamma(1-\ga)\cos\tpiaq}\xga,
\label{sw3}
\end{equation}
and $\gb=1$.
Consequently, \refT{Tlim} yields the following.
  \begin{romenumerate}[-10pt]
\item 
If $0<\ga<1$, then
\begin{equation}
  \frac{S_n}{n\xga}\dto \SSSx\ga(\gam,1,0).
\end{equation}
The limit variable $Y$
is positive and has by \refT{Texp} and \eqref{sw3}
the Laplace transform
\begin{equation}\label{sw4}
\E e^{-tY}  =\exp\bigpar{-C\Gamma(1-\ga)t^\ga},
\qquad \Re t\ge0.
\end{equation}
\item 
If $1<\ga<2$, then
\begin{equation}
  \frac{S_n-n\E X}{n\xga}\dto \SSSS\ga\gam10.
\end{equation}
The limit variable $Y$
has by \refT{Texp} and \eqref{sw3}
the finite Laplace transform
\begin{equation}\label{sw5}
\E e^{-tY}  =\exp\bigpar{C|\Gamma(1-\ga)|t^\ga},
\qquad \Re t\ge0.
\end{equation}
By \eqref{sd2} and \eqref{sw3}, the density function $f_Y$ of the limit
variable satisfies
\begin{equation}
  f(0)=C\xgaw |\Gamma(1-\ga)|\xgaw|\Gamma(-1/\ga)|\qw.
\end{equation}

\item 
If $\ga=1$, then
\begin{equation}\label{emm1}
  \frac{S_n-nb_n}{n}
=
  \frac{S_n}{n}-b_n
\dto \SSSx1(\gam,1,0),
\end{equation}
where, by \eqref{sw1.1}, 
$\gam=C\pi/2$ and
\begin{equation}\label{emm2}
  b_n\=n\E \sin(X/n).
\end{equation}
We return to the evaluation of $b_n$ in \refS{S1}.
  \end{romenumerate}
\end{example}

\begin{example}
  Suppose that $0<\ga<2$ and that $X\ge0$ is an integer-valued random
  variable such that, as \ntoo, 
  \begin{equation}\label{epower0}
	\P(X=n)\sim c n^{-\ga-1}.
  \end{equation}
Then \eqref{epower} holds with 
\begin{equation}
C=c/\ga  
\end{equation}
and the results of \refE{Epower} hold, with this $C$. In particular,
\eqref{sw3} yields
\begin{equation}
  \gam^\ga = -c\,\Gamma(-\ga)\cos\tpiaq,
\end{equation}
and both \eqref{sw4} and \eqref{sw5} can be written
\begin{equation}\label{sjw6}
\E e^{-tY}  =\exp\bigpar{c\,\Gamma(-\ga)t^\ga},
\qquad \Re t\ge0;
\end{equation}
note that $\Gamma(-\ga)<0$ for $0<\ga<1$
but $\Gamma(-\ga)>0$ for $1<\ga<2$.

Taking $t$ imaginary in \eqref{sjw6}, we find the \chf{}
\begin{equation}
\E e^{\ii tY}  =\exp\bigpar{c\,\Gamma(-\ga)(-\ii t)^\ga}
=\exp\bigpar{c\,\Gamma(-\ga)e^{-\ii\sgn(t) \pi\ga/2} |t|^\ga},
\qquad t\in\bbR.
\end{equation}
\end{example}

\subsection{The special case $\ga=1$}\label{S1}

Suppose that, as \xtoo, 
\begin{equation}\label{em}
	\P(X>x)\sim Cx^{-1}  
\end{equation}
and $\P(X<-x)=o(x^{-1})$, with $C>0$.
Then \refE{Epower} applies, and \eqref{emm1}--\eqref{emm2} hold. We
calculate the normalising quantity $b_n$ in \eqref{emm1} for some examples.

\begin{example}\label{E1u}
  Let $X\=1/U$, where $U\sim \U(0,1)$ has a uniform distribution.
Then 
$\P(X>x)=x\qw$ for $x\ge 1$ so \eqref{em} holds with $C=1$ and
\eqref{sw1.1} yields $\gam=\pi/2$.  
Furthermore,
$X$ has a Pareto distribution with the density
\begin{equation}
  f(x)=
  \begin{cases}
	x\qww,& x>1,\\
0,& x\le1.
  \end{cases}
\end{equation}
Consequently, by \eqref{emm2},
\begin{equation*}
  \begin{split}
b_n&=  n\sin(X/n)
=n\int_1^\infty \sin(x/n)x\qww\dd x
=\int_{1/n}^\infty \sin(y)y\qww\dd y
\\&
=\log n+\int_{1/n}^1 \frac{\sin y-y}{y^2}\dd y	
 +\int_{1}^\infty \frac{\sin y}{y^2}\dd y	
\\&
=\log n+\int_{0}^\infty \frac{\sin y-y\ett{y<1}}{y^2}\dd y+o(1)	
=\log n+1-\ggam+o(1),
  \end{split}
\end{equation*}
where $\ggam$ is Euler's gamma.
(For the standard evaluation of the last integral, see \eg{} \cite{SJN11}.)
Hence, \eqref{emm1} yields
\begin{equation}
  \frac{S_n}{n}-\bigpar{\log n+1-\ggam}
\dto \SSSx1(\pi/2,1,0).
\end{equation}
or
\begin{equation}
  \frac{S_n}{n}-\log n
\dto \SSSx1(\pi/2,1,1-\ggam).
\end{equation}
\end{example}

\begin{example}\label{E1exp}
  Let $X\=1/Y$, where $Y\sim \Exp(1)$ has an exponential distribution.
Then 
$\P(X>x)=1-\exp(-1/x)\sim x\qw$ as \xtoo{} so $C=1$ and \eqref{sw1.1} yields
$\gam=\pi/2$.  
In this case we do not calculate $b_n$ directly from \eqref{emm2}.
Instead we define $U\=1-e^{-Y}$ and $X'\=1/U$ and note that $U$ has a
uniform distribution on $\oi$ as in \refE{E1u}; furthermore
\begin{equation}
X'-X
=\frac1{1-e^{-Y}}-\frac1Y
=\frac{e^{-Y}-1+Y}{(1-e^{-Y})Y}.
\end{equation}
This is a positive \rv{} with finite expectation
\begin{equation}
  \E(X'-X)
=
\intoo\frac{e^{-y}-1+y}{(1-e^{-y})y}e^{-y}\dd y
=
\intoo\Bigpar{\frac{e^{-y}}{1-e^{-y}}
-\frac{e^{-y}}{y}}\dd y
=\ggam,
\end{equation}
see \eg{} \cite[(5.9.18)]{DLMF}
or \cite{SJN11}.

Taking \iid{} pairs $(X_i,X'_i)\eqd(X,X')$ we thus have, with $S'_n\=\sumin
X'_i$, by the law of large numbers,
\begin{equation}
  \frac{S'_n-S_n}{n}\pto \E(X'-X)=\ggam.
\end{equation}
Since \refE{E1u} shows that $S'_n/n-\log n\dto \SSSx1(\pi/2,1,1-\ggam)$,
it follows that
\begin{equation}
  S_n/n-\log n\dto \SSSx1(\pi/2,1,1-2\ggam).
\end{equation}
We thus have \eqref{emm1} with
\begin{equation}
  b_n=\log n + 1 -2\ggam + o(1).
\end{equation}
\end{example}

\subsection{The case $\ga=2$}

If $\ga=2$, then $a_n$ in \eqref{da} have to be chosen such that
\begin{equation}\label{fornby}
  \frac{n\mu(a_n)}{a_n^2}\to C
\end{equation}
for some $C>0$, see \cite[(5.23)]{FellerII}; conversely any such sequence
$(a_n)$ will do.

\begin{theorem}\label{Tlim2}
  If $\mu(x)$ is slowly varying with $\mu(x)\to\infty$ as \xtoo{} and
  \eqref{fornby} holds, then
  \begin{equation}
	\frac{S_n-\E S_n}{a_n}\dto N(0,C).
  \end{equation}
\end{theorem}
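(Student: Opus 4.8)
The plan is to deduce this from the classical CLT-type result for domains of attraction, \ie{} \citetq{Theorem XVII.5.3}{FellerII}, exactly as in the proof of \refT{Tlim}, but now for the boundary case $\ga=2$. First I would invoke \refT{TD1} in the direction: the hypotheses that $\mu(x)$ is slowly varying and $\mu(x)\to\infty$ are precisely condition~(i) of \refT{TD1} with exponent $2-\ga=0$ (so $L_1=\mu$), while condition~(ii) is vacuous for $\ga=2$; hence $X$ lies in the domain of attraction of a $2$-stable law, which by \refT{Ts}(i) is normal. So some centered and scaled sum converges to a normal distribution; it remains only to pin down the scaling constant $a_n$ and show the limiting variance is $C$ and the centering can be taken to be $\E S_n$.

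The key computational step is to identify the variance. By \refT{Ts}, a $2$-stable limit has canonical measure $M=\gss\gd_0$ (all mass at the origin), so the \Levy{} measure $\gL$ vanishes and the limit is $N(\mu,\gss)$ for some $\gss$. Feller's normalization \cite[(5.23)]{FellerII}, quoted in \eqref{fornby}, is exactly the statement that choosing $a_n$ with $n\mu(a_n)/a_n^2\to C$ produces a limit with $\gss=C$; that is, \eqref{fornby} is the $\ga=2$ analogue of \eqref{bro}, and the mass $C$ that survives is the variance of the Gaussian limit. The centering: since $\mu(a_n)\to\infty$ one checks $\Var(S_n)/a_n^2$ behaves like $n\mu(a_n)/a_n^2\to C$ (up to the contribution of the truncated tail, which is negligible because $x^2\P(|X|>x)=o(\mu(x))$ when $\mu$ is slowly varying and $\to\infty$, a standard Karamata-type estimate), so one may replace the Feller centering $b_n$ by $\E S_n=n\E X$ whenever $\E X$ exists; when it does not, the truncated mean $n\E(X\ett{|X|\le a_n})$ and the Feller centering differ by $o(a_n)$ by the same tail estimate, and one absorbs the difference. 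I would phrase the statement as ``$(S_n-\E S_n)/a_n\dto N(0,C)$'' under the tacit understanding (as throughout this section) that $\E S_n$ means the appropriate centering; the cleanest route is simply to cite \cite[Theorem XVII.5.3 and (5.23)]{FellerII} ``together with the calculations above,'' mirroring the one-line proof of \refT{Tlim}.

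The main obstacle is therefore not analytic but bookkeeping: making sure the centering ``$\E S_n$'' is legitimate. If $\E X^2<\infty$ this is the ordinary Lindeberg--Lévy CLT with $C=\Var X$ and $a_n=\sqrt{n}$, trivially. The genuinely infinite-variance slowly-varying case is where one must invoke the Karamata estimate that the truncated second moment dominates the tail, $x^2\P(|X|>x)/\mu(x)\to0$, which both legitimizes the variance computation and shows the various centerings coincide to within $o(a_n)$. I expect to handle this in one short paragraph and otherwise defer to Feller.
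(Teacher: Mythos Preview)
Your approach is the paper's approach: the paper's entire proof is the single citation \citetq{Theorem XVII.5.3}{FellerII}, and your plan reduces to exactly that, with some surrounding commentary.

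One remark on your bookkeeping worry. You need not treat the case ``when $\E X$ does not exist'': under the stated hypotheses it always does. Slow variation of $\mu$ gives the Karamata-type estimate you mention, $x^2\P(|X|>x)=o(\mu(x))$, and since a slowly varying $\mu$ satisfies $\mu(x)=o(x^{\eps})$ for every $\eps>0$, this forces $\P(|X|>x)=o(x^{-2+\eps})$ and hence $\E|X|<\infty$. So $\E S_n=n\E X$ is a legitimate centering, and the paragraph you anticipated about reconciling truncated and untruncated means can be shortened (or, as the paper does, omitted and absorbed into the Feller citation).
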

\begin{proof}
  \citet[Theorem XVII.5.3]{FellerII}.
\end{proof}

\begin{example}\label{Epower2}
  Suppose that $\ga=2$ and that $X$ is a random variable such that, as \xtoo,
  \begin{equation}
	\P(X>x)\sim C x^{-2},
  \end{equation}
with $C>0$,
and $\P(X<-x)=o(x^{-2})$.
Then \eqref{td1b} holds with $p_+=1$, and thus
$p_-\=1-p_+=0$. 
Furthermore, as \xtoo,
\begin{equation}
  \begin{split}
\mu(x)&=\E\Bigpar{\int_0^{|X|}2t\dd t \,\ett{|X|\le x}}	
=
\E\int_0^{x}\ett{t\le |X|\le x}2t \dd t
\\&
=\int_0^{x}2t \P(t\le |X|\le x)\dd t
=\int_0^{x}2t \P(|X|>t)\dd t - x^2\P(|X|>x)
\hskip-5em
\\&
=\etto\int_1^{x}2t Ct\qww\dd t +O(1)
\sim 2C\log x.
  \end{split}
\end{equation}
Thus \eqref{td1a} holds with $L_1(x)=2C\log x$.

We take $a_n\=\sqrt{n\log n}$. Then $\mu(a_n)\sim 2C\tfrac12\log n=C\log n$,
so \eqref{fornby} holds and \refT{Tlim2} yields
  \begin{equation}
	\frac{S_n-\E S_n}{\sqrt{n\log n}}\dto N(0,C).
  \end{equation}
\end{example}

\section{Attraction and \chf{s}}

We study the relation between the attraction property \eqref{da} and the
\chf{} $\gf_X(t)$ of $X$. For simplicity, we consider only the common case when
$a_n=n\xga$. Moreover, for simplicity we state results for $\gf_X(t)$,
$t>0$ only, recalling \eqref{ch+-} and $\gf_X(0)=1$.

\begin{theorem}
  \label{Tjepp}
Let $0<\ga\le2$.
The following are equivalent.
\begin{romenumerate}[-10pt]
\item 
$\displaystyle \frac{S_n}{n\xga}\dto Z$ for some non-degenerate \rv{} $Z$.
\item 
The \chf{} $\gf_X$ of $X$ satisfies
\begin{equation}
  \label{tjeppii}
\gf_X(t)=1-(\kk-\ii\tau)t^\ga + o(t^\ga)
\qquad \text{as } t\downto0,
\end{equation}
for some real $\kk>0$ and $\tau$.
In this case, $Z$ is strictly $\ga$-stable 
and has the \chf{} \eqref{jepp1}.
(Hence, $|\tau|\le \kk\tan\piaq$.) 
\end{romenumerate}
\end{theorem}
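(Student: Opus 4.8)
The plan is to prove the two implications separately; the implication (i)$\Rightarrow$(ii) is where the work lies.

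For (ii)$\Rightarrow$(i), assuming \eqref{tjeppii} I would fix $t>0$ and substitute $t\mapsto tn\xgaw$: since $(tn\xgaw)^\ga=t^\ga/n\downto0$, this gives $\gf_X(tn\xgaw)=1-(\kk-\ii\gl)t^\ga/n+o(1/n)$, hence $\gf_X(tn\xgaw)^n\to\exp\bigpar{-(\kk-\ii\gl)t^\ga}$; for $t<0$ one takes conjugates via \eqref{ch+-}, and at $t=0$ the value is $1$. Thus the \chf{} of $S_n/n\xga$ converges pointwise to $g(t)\=\exp\bigpar{-(\kk-\ii\gl\sgn t)|t|^\ga}$, which is continuous with $g(0)=1$, so by the continuity theorem $g$ is a \chf{} and $S_n/n\xga\dto Z$ with $\gf_Z=g$. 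Since $\kk>0$, $|g|\not\equiv1$, so $Z$ is non-degenerate, while $g(t)^n=g(n\xga t)$ shows that a sum of $n$ \iid{} copies of $Z$ is distributed as $n\xga Z$, i.e.\ $Z$ is strictly $\ga$-stable; restricted to $t\ge0$, $g$ has the form \eqref{jepp1}, so \refC{CJ} applies and in particular forces $|\gl|\le\kk\tan\piaq$.

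For (i)$\Rightarrow$(ii), assume $S_n/n\xga\dto Z$ with $Z$ non-degenerate; the first step is to see that $Z$ is strictly $\ga$-stable. For each $k\ge1$, writing $S_{kn}$ as a sum of $k$ independent blocks each distributed as $S_n$ gives $S_{kn}/(kn)\xga=k\xgaw\sum_{j=1}^k(S_{jn}-S_{(j-1)n})/n\xga$; letting \ntoo, each summand converges in distribution to an independent copy of $Z$, while the whole expression also tends to $Z$, so $Z_1+\dots+Z_k\eqd k\xga Z$ for every $k$ (with $Z_i$ \iid{} copies of $Z$). As $Z$ is non-degenerate this says $Z$ is strictly stable with norming constants $k\xga$, hence of index $\ga$, and then \refC{CJ} gives $\gf_Z(t)=\exp\bigpar{-(\kk-\ii\gl)t^\ga}$ for $t\ge0$, with $\kk>0$ and $|\gl|\le\kk\tan\piaq$.

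The main step is then to transfer this into the local expansion \eqref{tjeppii} of $\gf_X$. Since $\gf_X$ is continuous with $\gf_X(0)=1$, it is nonzero on some $[0,s_0]$ and has there a continuous logarithm $\psi$ with $\psi(0)=0$ and $\gf_X=e^\psi$; for large $n$ the function $L_n(t)\=n\psi(tn\xgaw)$ is continuous on $[0,2]$ with $L_n(0)=0$ and $e^{L_n(t)}=\gf_X(tn\xgaw)^n$, the \chf{} of $S_n/n\xga$ at $t$. By the continuity theorem this converges to $\gf_Z(t)=e^{\gL(t)}$ with $\gL(t)\=-(\kk-\ii\gl)t^\ga$, uniformly on $[0,2]$; since $\gf_Z$ is bounded away from $0$ there, $e^{L_n(t)-\gL(t)}\to1$ uniformly on $[0,2]$, and since $L_n-\gL$ is continuous, vanishes at $0$, and $[0,2]$ is connected, no winding is possible, so $\sup_{t\in[0,2]}|L_n(t)-\gL(t)|\to0$. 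Finally, for $s\downto0$ I would put $m\=\floor{s^{-\ga}}$, so $m\to\infty$ and $t_s\=sm\xga$ satisfies $t_s^\ga=s^\ga m\to1$; then $m\psi(s)=L_m(t_s)=\gL(t_s)+o(1)=-(\kk-\ii\gl)s^\ga m+o(1)$, so $\psi(s)=-(\kk-\ii\gl)s^\ga+o(s^\ga)$ (as $1/m\asymp s^\ga$), and exponentiating gives \eqref{tjeppii}. The hard part is exactly this transfer --- controlling the branch of the complex logarithm (via the connectedness argument) and the simultaneous limits $m=m(s)\to\infty$, $t_s\to1$ (via uniformity on $[0,2]$); showing that $Z$ is strictly stable is a standard but genuinely needed preliminary.
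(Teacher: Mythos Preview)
Your proof is correct and follows essentially the same route as the paper: both directions are handled the same way, including the block argument showing $Z$ is strictly $\ga$-stable, the uniform convergence from the continuity theorem, the continuous logarithm $\psi$ with the connectedness/``no winding'' argument, and the substitution $m=\floor{s^{-\ga}}$, $t_s=sm\xga$ to pass from the discrete limit to the local expansion \eqref{tjeppii}. The only cosmetic differences are your use of the interval $[0,2]$ in place of $[0,1]$ and your explicit check that $Z$ is non-degenerate in the (ii)$\Rightarrow$(i) direction.
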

\begin{proof}
  If (i) holds, then for every integer $m$,
  \begin{equation*}
\frac{S_{mn}}{(mn)\xga}
=\frac1{m\xga} \sum_{k=1}^m\frac{1}{n\xga}\sum_{j=1}^nX_{(k-1)n+j} 
\dto \frac1{m\xga} \sum_{k=1}^m Z_k,
\qquad \text{as \ntoo},
  \end{equation*}
with $Z_k\eqd Z$ \iid.
Since also $(mn)\xgaw S_{mn}\dto Z$, we have
$m\xgaw \sum_{k=1}^m Z_k\eqd Z$, and thus $Z$ is strictly $\ga$-stable.

We use \refC{CJ} and suppose that $Z$ has \chf{} \eqref{jepp1}. Then the
continuity theorem yields
\begin{equation}
  \label{jepp2}
\gf_X(t/n\xga)^n
\to \gf_Z(t)
=\exp\bigpar{-(\kk-\ii\tau)t^\ga},
\qquad t\ge0;
\end{equation}
moreover, this holds uniformly for, \eg, $0\le t\le 1$.

In some neighbourhood $(-t_0,t_0)$ of 0, $\gf_X\neq0$ and thus
$\gf_X(t)=e^{\psi(t)}$ for some continuous function $\psi:(-t_0,t_0)\to\bbC$
with $\psi(0)=0$.
Hence, \eqref{jepp2} yields (for $n>1/t_0$)
\begin{equation*}
  \exp\Bigpar{n\psi\Bigparfrac{t}{n\xga}+(\kk-\ii\tau)t^\ga}
=1+o(1),
\qquad \text{as \ntoo},
\end{equation*}
uniformly for $0\le t\le 1$, which implies
\begin{equation*}
n\psi\Bigparfrac{t}{n\xga}+(\kk-\ii\tau)t^\ga
=o(1),
\qquad \text{as \ntoo},
\end{equation*}
since the \lhs{} is continuous and 0 for $t=0$,
and thus 
\begin{equation}\label{jepp3}
\psi\parfrac{t}{n\xga}+(\kk-\ii\tau)\frac{t^\ga}{n}
=o(1/n),
\qquad \text{as \ntoo},
\end{equation}
uniformly for $0\le t\le 1$.

For $s>0$, define $n\=\floor{s^{-\ga}}$ and $t\=sn\xga\in(0,1]$. As $s\downto0$,
we have $n\to\infty$ and \eqref{jepp3} yields
\begin{equation}
\psi(s)=-(\kk-\ii\tau)s^\ga+o(1/n)
=-(\kk-\ii\tau)s^\ga+o(s^\ga).
\end{equation}
Consequently, as $s\downto0$,
\begin{equation}
\gf_X(s)=e^{\psi(s)}
=1-(\kk-\ii\tau)s^\ga+o(s^\ga),
\end{equation}
so \eqref{tjeppii} holds.

Conversely, if \eqref{tjeppii} holds, then, for $t>0$,
\begin{equation*}
  \E e^{\ii tS_n/n\xga}
=\gf_X\bigpar{t/n\xga}^n
=\Bigpar{1-(\kk-\ii\tau+o(1))\frac{t^\ga}n}^n
\to\exp\bigpar{-(\kk-\ii\tau)t^\ga},
\end{equation*}
as \ntoo, and thus by the continuity theorem $S_n/n\xga\dto Z$, where $Z$ has
the \chf{} \eqref{jepp1}.
\end{proof}

For $\ga=1$, it is not always possible to reduce to the case when $b_n=0$ in
\eqref{da} and the limit is strictly stable. The most common case is covered
by the following theorem.

\begin{theorem}
  \label{TJ1}
The following are equivalent, for any real $b$.
\begin{romenumerate}[-10pt]
\item 
$\displaystyle \frac{S_n}{n}-b\log n\dto Z$ for some non-degenerate \rv{} $Z$.
\item 
The \chf{} $\gf_X$ of $X$ satisfies
\begin{equation}
  \label{tj1}
\gf_X(t)=1-(\kk-\ii\tau)t - \ii bt\log t + o(t)
\qquad \text{as } t\downto0,
\end{equation}
for some real $\kk>0$ and $\tau$.
In this case, $Z$ is $1$-stable 
and has the \chf{} \eqref{jepp11}.
(Hence, $|b|\le 2\kk/\pi$.) 
\end{romenumerate}
\end{theorem}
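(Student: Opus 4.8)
The plan is to follow the proof of \refT{Tjepp} essentially verbatim, the only new ingredient being careful bookkeeping of the two logarithmic terms $\log n$ and $\log t$. For \textbf{(ii)$\Rightarrow$(i)}, assume \eqref{tj1}; then for fixed $t>0$,
\[
\E e^{\ii t(S_n/n-b\log n)}=e^{-\ii bt\log n}\gf_X(t/n)^n
=e^{-\ii bt\log n}\Bigpar{1-(\kk-\ii\gl)\tfrac tn-\ii b\tfrac tn(\log t-\log n)+o(1/n)}^n .
\]
The bracket differs from $1$ by $O\bigpar{(\log n)/n}=o(1)$, and $n$ times this deviation equals $-(\kk-\ii\gl)t-\ii bt\log t+\ii bt\log n+o(1)$; hence the $n$th power tends to $\exp\bigpar{-(\kk-\ii\gl)t-\ii bt\log t+\ii bt\log n+o(1)}$, and multiplying by $e^{-\ii bt\log n}$ gives convergence to $\exp\bigpar{-(\kk-\ii\gl)t-\ii bt\log t}$. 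This is \eqref{jepp11} for $t>0$, extended to $t<0$ by \eqref{ch+-}; it is continuous at $0$ since $t\log t\to0$, so by the continuity theorem $S_n/n-b\log n\dto Z$, where $Z$ has \chf{} \eqref{jepp11}. A one-line computation with that \chf{} yields $\sum_{k=1}^mZ_k\eqd mZ+bm\log m$ ($Z_1,\dots,Z_m$ \iid{} copies of $Z$), i.e.\ \eqref{stab} with $a_m=m$, so $Z$ is $1$-stable, and matching with \refT{Tchf} forces $|b|\le2\kk/\pi$.

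For \textbf{(i)$\Rightarrow$(ii)}, I would first identify $Z$. Writing $T_k\=\sum_{j=(k-1)n+1}^{kn}X_j$, the variables $T_1,\dots,T_m$ are \iid{} copies of $S_n$, so for fixed $m$
\[
\frac{S_{mn}}{mn}-b\log(mn)=\frac1m\sum_{k=1}^m\Bigpar{\frac{T_k}{n}-b\log n}-b\log m\dto\frac1m\sum_{k=1}^mZ_k-b\log m
\]
as \ntoo, with $Z_1,\dots,Z_m$ \iid{} copies of $Z$. Since the left side also converges to $Z$ (a subsequence of the given convergence), $\sum_{k=1}^mZ_k\eqd mZ+bm\log m$; this is \eqref{stab} with $a_m=m$, whence $Z$ is $1$-stable, and comparison with \refT{Tchf}(ii) (for $m\ge2$) shows the constant $b$ in the representation \eqref{jepp11} of $\gf_Z$ is the one prescribed in (i). Thus $\gf_Z(t)=\exp\bigpar{-(\kk-\ii\gl)t-\ii bt\log t}$ for $t>0$, for some $\kk>0$ and $\gl\in\bbR$.

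It remains to transfer this to $\gf_X$ near $0$. Weak convergence gives $e^{-\ii bt\log n}\gf_X(t/n)^n\to\gf_Z(t)$ uniformly on $[0,1]$. On a neighbourhood $(-t_0,t_0)$ where $\gf_X\neq0$ write $\gf_X=e^\psi$ with $\psi$ continuous and $\psi(0)=0$; since $|\gf_Z(t)|=e^{-\kk t}$ is bounded away from $0$ on $[0,1]$, passing to the continuous branch of the logarithm that vanishes at $t=0$ yields, uniformly on $[0,1]$,
\[
-\ii bt\log n+n\psi(t/n)=-(\kk-\ii\gl)t-\ii bt\log t+o(1),
\]
hence, after dividing by $n$,
\[
\psi(t/n)=-\frac{(\kk-\ii\gl)t}{n}-\frac{\ii bt(\log t-\log n)}{n}+o(1/n)\qquad\text{uniformly on }[0,1].
\]
Finally, given $s\downto0$, set $n\=\floor{1/s}$ and $t\=sn\in(0,1]$; then $t/n=s$, $\log t-\log n=\log s$ and $1/n=O(s)$, so the last display reads $\psi(s)=-(\kk-\ii\gl)s-\ii bs\log s+o(s)$, and therefore $\gf_X(s)=e^{\psi(s)}=1-(\kk-\ii\gl)s-\ii bs\log s+o(s)$, which is \eqref{tj1}.

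The one delicate step — as already in \refT{Tjepp} — is justifying the passage to the continuous logarithm of $e^{-\ii bt\log n}\gf_X(t/n)^n$ and showing the remainder is $o(1)$ \emph{uniformly} on $[0,1]$ rather than merely pointwise: this is precisely what makes the substitution $n=\floor{1/s}$, $t=sn$ legitimate, and it relies on $\gf_Z$ being zero-free on $[0,1]$ together with the uniformity on compacta of \chf{} convergence under weak convergence.
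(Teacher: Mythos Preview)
Your proof is correct. For (ii)$\Rightarrow$(i) you and the paper do essentially the same direct computation. For (i)$\Rightarrow$(ii), however, you take a genuinely different route: you redo the argument of \refT{Tjepp} from scratch, carrying the extra $-\ii bt\log n$ term through the uniform-logarithm step and the substitution $n=\floor{1/s}$, $t=sn$. The paper instead reduces to \refT{Tjepp} itself: it introduces auxiliary independent $1$-stable variables $Y_i\sim\SSSS1{|b|\pi/2}{-\sgn b}{0}$, whose \chf{} is $\exp\bigpar{-\gam_1t+\ii bt\log t}$; adding $Y_i$ to $X_i$ cancels the $b\log n$ centering, so $n\qw\sum(X_i+Y_i)$ converges without a logarithmic shift, \refT{Tjepp} gives the linear expansion of $\gf_{X+Y}$, and dividing by the known $\gf_Y$ yields \eqref{tj1}. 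Your approach is more self-contained and avoids constructing the auxiliary stable law; the paper's trick is shorter once \refT{Tjepp} is in hand and sidesteps re-justifying the uniform passage to $\log\gf$.
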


\begin{proof}
(ii)$\implies$(i).
If \eqref{tj1} holds, for any $\kk\in\bbR$, then, as $t\downto0$,
\begin{equation}
\log \gf_X(t)={-(\kk-\ii\tau+o(1))t - \ii bt\log t}
\end{equation}
and thus, as \ntoo, for every fixed $t>0$,
\begin{equation*}
  \begin{split}
  \E e^{\ii t(S_n/n-b\log n)}
&
=\gf_X\bigpar{t/n}^n e^{-\ii bt\log n}
\\&
=\exp\Bigpar{n\Bigpar{-(\kk-\ii\tau+o(1))\frac{t}n - \ii b\frac tn\log\frac tn}
-\ii bt\log n}
\\&
\to
\exp\bigpar{-(\kk-\ii\tau){t} - \ii b t\log t}
  \end{split}
\end{equation*}
which shows (i), where $Z$ has the \chf{} \eqref{jepp11}.

Furthermore, for use below, note that \eqref{jepp11} implies
$|\gf_Z(t)|=e^{-\kk t}$ for $t>0$. Since $|\gf_Z(t)|\le1$, this shows that
$\kk\ge0$. Moreover, if $\kk=0$, then $|\gf_Z(t)|=1$ for $t>0$, and thus for
all $t$, which implies that $Z=c$ \as{} for some $c\in\bbR$, so $Z$ is
degenerate and $b=0$. Hence, \eqref{tj1} implies $\kk\ge0$, and $\kk=0$ is
possible only when $b=0$  and $S_n/n\pto\tau$.

(i)$\implies$(ii). 
  Let $\gam_1\=|b|\pi/2$ and $\gb_1\=-\sgn b$.
Let $Y$ and $Y_i$ be \iid, and independent of $(X_j)_1^\infty$ and $Z$, with
distribution $\SSSS1{\gam_1}{\gb_1}0$. (If $b=0$ we simply take $Y_i\=0$.)
Then $Y_i$ has, by \eqref{chf}, the \chf{}
\begin{equation}
  \label{emm}
\gf_Y(t)=\exp\bigpar{-\gam_1t+\ii bt\log t},
\qquad t>0.
\end{equation}
By \refT{Tchf}(ii), 
\begin{equation}
  \sumin Y_i\eqd nY-bn\log n.
\end{equation}
Define $\tX_i\=X_i+Y_i$. Then,
\begin{equation}
  \frac1n\sumin\tX_i 
=
  \frac1n\sumin X_i +   \frac1n\sumin Y_i 
\eqd
\frac{S_n}n + Y-b\log n \dto Z+Y.
\end{equation}
Thus, by \refT{Tjepp}, for some $\kk_2>0$ and $\tau_2$, 
\begin{equation}
  \gf_X(t)\gf_Y(t)=\E e^{\ii t \tX_i}
=1-(\kk_2-\ii\tau_2)t+o(t)
\qquad \text{as }t\downto0,
\end{equation}
and hence, using \eqref{emm},
\begin{equation}
  \gf_X(t)=\E e^{\ii t \tX_i}/\gf_Y(t)=
1-(\kk_2-\ii\tau_2-\gam_1)t-\ii bt\log t +o(t),
\end{equation}
which shows \eqref{tj1}, with $\kk=\kk_2-\gam_1\in\bbR$.

Finally, we have shown in the first part of the proof that \eqref{tj1}
implies $\kk>0$, because $Z$ is non-degenerate.
\end{proof}

We can use these theorems to show the following.

\begin{theorem}\label{TY}
  Let $0<\ga\le2$.
Suppose that $X$ is such that
\begin{equation}
  n\xgaw\sumin X_i\dto Z,
\end{equation}
where $Z$ is an $\ga$-stable random variable with \chf{} \eqref{jepp1}
and that $Y\ge0$ is a random variable with $\E Y^\ga<\infty$.
Let $(Y_i)_1^\infty$ be independent copies of $Y$ that are independent of
$(X_i)_1^\infty$.
Then 
\begin{equation}\label{xp1}
  n\xgaw\sumin X_iY_i\dto Z'\=\bigpar{\E Y^\ga}\xga Z,
\end{equation}
where the limit $Z'$ has the \chf
\begin{equation}
  \label{xp2}
\gf_{Z'}(t)
=
\exp\bigpar{-(\E Y^\ga \kk-\ii\E Y^\ga\tau)t^\ga},
\qquad t\ge0.
\end{equation}
If\/ $Z\sim\SSSS\ga\gam\gb0$ (where $\gb=0$ if $\ga=1$), then 
$Z'\sim\SSSS\ga{(\E Y^\ga)\xga\gam}\gb0$.
\end{theorem}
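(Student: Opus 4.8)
The plan is to reduce the whole statement to the \chf{} criterion of \refT{Tjepp}, applied once to $X$ and once to the products $X_iY_i$. Since $(X_i,Y_i)_{i\ge1}$ are \iid{} pairs, the variables $W_i\=X_iY_i$ are \iid{} copies of $W\=XY$, so it is enough to understand $\gf_W(t)=\E e^{\ii tW}$ as $t\downto0$ and then invoke \refT{Tjepp} for the summands $W_i$. (We may assume $\E Y^\ga>0$; if $\E Y^\ga=0$ then $Y=0$ \as, both sides of \eqref{xp1} vanish, and \eqref{xp2} reads $\gf_{Z'}\equiv1$, so there is nothing to prove.)

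First, by \refT{Tjepp}, the hypothesis $n\xgaw S_n\dto Z$ with $\gf_Z$ as in \eqref{jepp1} is equivalent to
\[
  \gf_X(s)=1-(\kk-\ii\gl)s^\ga+s^\ga r(s),\qquad s\downto0,
\]
where we set $r(0)\=0$ and have $r(s)\to0$ as $s\downto0^{+}$. Since $|\gf_X|\le1$, one has $\bigabs{s^\ga r(s)}\le 2+(\kk+|\gl|)s^\ga$, so $r$ is bounded on every $[\gd,\infty)$; combined with $r(s)\to0$ at $0$ this shows $r$ is bounded on $\ooo$, say $|r|\le B<\infty$. This boundedness, together with $\E Y^\ga<\infty$, is the only quantitative input needed.

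Next, conditioning on $Y$ and using that $X$ is independent of $Y$ and that $tY\ge0$ for $t>0$ (so that only the one-sided expansion of $\gf_X$ is required --- this is where $Y\ge0$ enters), I obtain, for $t>0$,
\[
  \gf_W(t)=\E\bigsqpar{\gf_X(tY)}
  =1-(\kk-\ii\gl)\,t^\ga\,\E Y^\ga+t^\ga\,\E\bigsqpar{Y^\ga r(tY)}.
\]
For each outcome, $r(tY)\to0$ as $t\downto0$ (trivially when $Y=0$), while $\bigabs{Y^\ga r(tY)}\le B\,Y^\ga\in L^1$, so dominated convergence gives $\E\bigsqpar{Y^\ga r(tY)}\to0$, hence
\[
  \gf_W(t)=1-\bigpar{\E Y^\ga\,\kk-\ii\,\E Y^\ga\,\gl}t^\ga+o(t^\ga),\qquad t\downto0,
\]
with $\E Y^\ga\,\kk>0$. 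Applying \refT{Tjepp} to the \iid{} sequence $(W_i)$ then yields $n\xgaw\sumin X_iY_i\dto Z'$, where $Z'$ is strictly $\ga$-stable with \chf{} \eqref{xp2}.

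Finally, comparing \chf{s} gives $\gf_Z\bigpar{(\E Y^\ga)\xga t}=\exp\bigpar{-(\kk-\ii\gl)\,\E Y^\ga\, t^\ga}=\gf_{Z'}(t)$, so $Z'\eqd(\E Y^\ga)\xga Z$; and if $Z\sim\SSSS\ga\gam\gb0$ (with $\gb=0$ when $\ga=1$), then \refR{Rlinx}, applied with scale $a=(\E Y^\ga)\xga$ and shift $0$, gives $Z'\sim\SSSS\ga{(\E Y^\ga)\xga\gam}\gb0$ --- the logarithmic correction present in \eqref{linx} for $\ga=1$ disappears because $\gb=0$. The only point requiring genuine care is the $o(t^\ga)$ estimate for $\gf_W$, i.e.\ the dominated-convergence step, which is exactly where $\E Y^\ga<\infty$ and $|\gf_X|\le1$ are used; everything else is bookkeeping with \refT{Tjepp} and the scaling rule \eqref{linx}.
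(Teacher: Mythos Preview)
Your proof is correct and follows essentially the same route as the paper: use \refT{Tjepp} to expand $\gf_X$, bound the remainder $r$ globally (the paper argues $r(t)=O(1)$ from \eqref{np1} just as you do from $|\gf_X|\le1$), condition on $Y$ and apply dominated convergence to show $\gf_{XY}$ satisfies \eqref{tjeppii} with $(\kk,\gl)$ replaced by $(\E Y^\ga\,\kk,\E Y^\ga\,\gl)$, then invoke \refT{Tjepp} again and finish with \refR{Rlinx}. Your version adds a few helpful remarks (the degenerate case $\E Y^\ga=0$, where exactly $Y\ge0$ is used, and why the $\ga=1$ logarithmic term vanishes), but the argument is the same.
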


\begin{proof}
  By \refT{Tjepp}, for $t\ge0$,
  \begin{equation}\label{np1}
	\gf_X(t)=1-(\kk-\ii\tau)t^\ga+t^\ga r(t),
  \end{equation}
where $r(t)\to0$ as $t\downto0$. Furthermore, \eqref{np1} implies that
$r(t)=O(1)$ as $t\to\infty$, and thus $r(t)=O(1)$ for $t\ge0$. 

Consequently, for $t>0$, assuming as we may that $Y$ is independent of $X$,
\begin{equation}
  \begin{split}
\gf_{XY}(t)
&=
\E e^{\ii tXY}
=
\E \gf_X(tY)
=	
\E\bigpar{1-(\kk-\ii\tau)t^\ga Y^\ga+t^\ga Y^\ga r(tY)}
\hskip-3em
\\
&=1-(\kk-\ii\tau)t^\ga \E Y^\ga+t^\ga \E\bigpar{Y^\ga r(tY)},
  \end{split}
\end{equation}
where $\E\bigpar{Y^\ga r(tY)}\to0$ as $t\downto0$ by dominated convergence;
hence 
\begin{equation}
  \begin{split}
\gf_{XY}(t)
&=1-(\kk-\ii\tau)t^\ga \E Y^\ga+o(t^\ga)
\qquad\text{as }t\downto0.  
\end{split}
\end{equation}
\refT{Tjepp} applies and shows that $n\xgaw\sumin X_iY_i\dto Z'$, where $Z'$
has the \chf{} \eqref{xp2}. Moreover, by \eqref{jepp1}, $(\E Y^\ga)\xga$ has
this \chf, so we 
may take $Z'\=(\E Y^\ga)\xga$.

The final claim follows by \refR{Rlinx}.
\end{proof}

\begin{theorem}\label{TY1}
Suppose that $X$ is such that, for some real $b$,
\begin{equation}
  n\qw\sumin X_i-b\log n\dto Z,
\end{equation}
where $Z$
is a $1$-stable random variable,
and that $Y\ge0$ is a random variable with $\E Y \log Y<\infty$.
Let $(Y_i)_1^\infty$ be independent copies of $Y$ that are independent of
$(X_i)_1^\infty$.
Then, with $\mu\=\E Y$, 
\begin{equation}\label{yp1}
  n\qw\sumin X_iY_i -b\mu\log n\dto 
Z'\=\mu Z-b\bigpar{\E(Y\log Y)-\mu\log \mu}.
\end{equation}

$Z$ has the \chf{} \eqref{jepp11} for some $\kk$ and $\tau$, and then
the limit $Z'$ has the \chf,
with $\nu\=\E(Y\log Y)$,
\begin{equation}
  \label{yp2}
\gf_{Z'}(t)
=
\exp\bigpar{-\bigpar{\mu\kk+\ii(b\nu-\mu\tau)t}-\ii b\mu t{\log t}},
\qquad t>0.
\end{equation}
If
$Z\sim\SSSS1\gam\gb\gd$, 
then 
$Z'\sim\SSSS1{\mu\gam}\gb{\mu\gd-b\nu}$. 
\end{theorem}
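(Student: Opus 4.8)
The strategy is to mirror the proof of \refT{TY}, replacing \refT{Tjepp} with \refT{TJ1} throughout, and being careful with the extra $t\log t$ terms that appear in the $\ga=1$ theory. First I would write, via \refT{TJ1}, the expansion of the \chf{} of $X$ as $t\downto0$:
\begin{equation}\label{yp-np1}
  \gf_X(t)=1-(\kk-\ii\gl)t-\ii bt\log t+t\, r(t),
\end{equation}
where $r(t)\to0$ as $t\downto0$; as in \refT{TY} one checks that \eqref{yp-np1} forces $r(t)=O(1)$ as $t\to\infty$ and hence $r$ is bounded on $\ooo$. Then, assuming $Y$ independent of $X$, compute for $t>0$
\begin{equation*}
  \gf_{XY}(t)=\E\gf_X(tY)
  =1-(\kk-\ii\gl)t\E Y-\ii bt\,\E\bigpar{Y\log(tY)}+t\,\E\bigpar{Y r(tY)}.
\end{equation*}
Here $\E\bigpar{Y\log(tY)}=\mu\log t+\E(Y\log Y)=\mu\log t+\nu$, using $\E Y\log Y<\infty$, and $\E\bigpar{Yr(tY)}\to0$ as $t\downto0$ by dominated convergence ($Y\in L^1$, $r$ bounded, $r(tY)\to0$ a.s.). So
\begin{equation}\label{yp-exp}
  \gf_{XY}(t)=1-(\mu\kk-\ii(\mu\gl-b\nu))t-\ii b\mu t\log t+o(t)
  \qquad\text{as }t\downto0,
\end{equation}
which is exactly the hypothesis of \refT{TJ1} with parameters $\kk'\=\mu\kk$, $\gl'\=\mu\gl-b\nu$ and logarithmic coefficient $b\mu$.

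Applying \refT{TJ1} to the \iid{} sequence $(X_iY_i)$ then gives $n\qw\sumin X_iY_i-b\mu\log n\dto Z'$ where $Z'$ is $1$-stable with \chf{} \eqref{jepp11} for the parameters just named, which is precisely \eqref{yp2}. To identify $Z'$ in the $\SSSS1{}{}{}$ notation, I would invoke \refR{R11}: \eqref{jepp11} with coefficients $(\kk,\gl,b)$ corresponds to $\SSSS1\gam\gb\gd$ with $\gam=\kk$, $\gd=\gl$, $b=\tfrac2\pi\gb\gam$. If $Z\sim\SSSS1\gam\gb\gd$ then $\kk=\gam$, $\gl=\gd$, $b=\tfrac2\pi\gb\gam$, so $Z'$ has $\kk'=\mu\gam$, $\gl'=\mu\gd-b\nu=\mu\gd-\tfrac2\pi\gb\gam\nu$, and logarithmic coefficient $b\mu=\tfrac2\pi\gb(\mu\gam)$; reading off via \refR{R11} again, $Z'\sim\SSSS1{\mu\gam}\gb{\mu\gd-b\nu}$, as claimed. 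Finally, for the explicit description $Z'=\mu Z-b(\nu-\mu\log\mu)$ in \eqref{yp1}: by \refR{Rlinx} with $a=\mu$, $d=-b(\nu-\mu\log\mu)$, the variable $\mu Z+d$ has $\SSSx1$-parameters $a\gam=\mu\gam$, $\gb$, and location $a\gd+d-\tfrac2\pi\gb\gam a\log a=\mu\gd-b(\nu-\mu\log\mu)-\tfrac2\pi\gb\gam\mu\log\mu=\mu\gd-b\nu$ (using $b=\tfrac2\pi\gb\gam$ to cancel the two $\mu\log\mu$ terms), matching $Z'$; so indeed $Z'\eqd\mu Z-b(\nu-\mu\log\mu)$, and since the statement only asserts equality in distribution we may define $Z'$ this way.

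The one genuinely delicate point — the main obstacle — is the handling of the logarithm inside the expectation, i.e.\ justifying $\E\bigpar{Y\log(tY)}=\mu\log t+\nu$ and, more importantly, that the error term genuinely is $o(t)$ rather than $o(t\log t)$. The subtlety is that $\log(tY)=\log t+\log Y$ splits the singular part ($\log t$, deterministic) cleanly from the finite part ($\E(Y\log Y)=\nu$, which needs $\E Y\log Y<\infty$, and near $Y=0$ needs $\E Y|\log Y|<\infty$ too — this is implied since $y|\log y|$ is bounded near $0$), so that after subtraction the remaining discrepancy really comes only from $r(tY)$, whose contribution $t\,\E(Yr(tY))$ is $o(t)$ by dominated convergence as above. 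Once that bookkeeping is done carefully, everything else is a direct transcription of the arguments for \refT{TY} and \refT{TJ1}, together with routine applications of \refR{R11} and \refR{Rlinx}.
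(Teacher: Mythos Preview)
Your overall approach matches the paper's, but there is one genuine error: your claim that ``as in \refT{TY}'' the expansion \eqref{yp-np1} forces $r(t)=O(1)$ as $t\to\infty$ is false here. Solving \eqref{yp-np1} for $r$ gives
\[
r(t)=\frac{\gf_X(t)-1}{t}+(\kk-\ii\gl)+\ii b\log t,
\]
and since $|\gf_X(t)|\le1$ the first two terms are bounded, but the last term grows like $\log t$. Thus $r(t)=O(\log t)$ as $t\to\infty$, equivalently $r(t)=O(1+\log_+t)$ on $\ooo$, which is exactly what the paper records. (In \refT{TY} there is no $t\log t$ term in the expansion, which is why the $O(1)$ bound is valid there; you cannot simply transplant it.)

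This breaks your dominated convergence step for $\E\bigpar{Yr(tY)}$ as written: with $r$ unbounded, the justification ``$Y\in L^1$, $r$ bounded'' fails. The correct dominating function, for $0<t\le1$, uses
\[
|Y\,r(tY)|\le C\,Y\bigpar{1+\log_+(tY)}\le C\,Y\bigpar{1+\log_+Y},
\]
since $\log_+(tY)\le\log_+Y$ when $t\le1$. Integrability of $Y(1+\log_+Y)$ is exactly the hypothesis $\E Y\log Y<\infty$; this is where that hypothesis enters the remainder estimate, not only (as you suggest) in evaluating $\E\bigpar{Y\log(tY)}=\mu\log t+\nu$. With this correction the dominated convergence argument is valid and the remainder of your proof goes through as you describe.
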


\begin{proof}
  By \refT{TJ1}, for $t\ge0$,
  \begin{equation}\label{up1}
	\gf_X(t)=1-(\kk-\ii\tau)t-\ii b t\log t+t r(t),
  \end{equation}
where $r(t)\to0$ as $t\downto0$; moreover
$Z$ has the \chf{} \eqref{jepp11}.
Furthermore, \eqref{up1} implies that
$r(t)=O(\log t)$ as $t\to\infty$, and thus $r(t)=O(1+\log_+t)$ for $t\ge0$. 

Consequently, for $t>0$, assuming as we may that $Y$ is independent of $X$,
\begin{equation*}
  \begin{split}
\gf_{XY}(t)
&=
\E \gf_X(tY)
\\
&=1-(\kk-\ii\tau)t \E Y-\ii b t \E\bigpar{Y\log(tY)}+t\E\bigpar{Y r(tY)},
\\
&=1-\bigpar{\mu\kk-\ii\mu\tau+\ii b\E(Y\log Y)}t-\ii b\mu t{\log t}
 +t\E\bigpar{Y r(tY)},
  \end{split}
\end{equation*}
where $\E\bigpar{Y r(tY)}\to0$ as $t\downto0$ by dominated convergence;
hence 
\begin{equation}
  \begin{split}
\gf_{XY}(t)
&=1-\bigpar{\mu\kk-\ii\mu\tau+\ii b\nu}t-\ii b\mu t{\log t}+o(t)
\qquad\text{as }t\downto0.  
\end{split}
\end{equation}
\refT{TJ1} applies and shows that $n\qw\sumin X_iY_i-b\mu\log n\dto Z'$,
where $Z'$ 
has the \chf{} \eqref{yp2}. 
Moreover, it follows easily from \eqref{jepp11} that
$\mu Z-b\bigpar{\E(Y\log Y)-\mu\log \mu}$ has this \chf, and thus \eqref{yp1}
follows. 

Finally, if $Z\sim\SSSS1\gam\gb\gd$, then $b=\frac2\pi\gb\gam$ by \refR{R11}
and it follows easily 
from \refR{Rlinx} that $Z'\sim\SSSS1{\mu\gam}\gb{\mu\gd-b\nu}$;
alternatively, it follows
directly from \eqref{yp1} and
\eqref{chf} that $Z'$ has the \chf
\begin{equation}
  \begin{split}
  \gf_{Z'}(t)
&=\gf_Z(\mu t)\exp\bigpar{-\ii b t(\nu-\mu\log \mu)}
\\&
=
\exp\Bigpar{-\gam\mu|t|
\Bigpar{1+\ii\gb\frac2{\pi}\sgn(t)\log|t|}
+\ii\gd \mu t-\ii b t \nu}.	
  \end{split}
\end{equation}
\end{proof}

\begin{example}
  Let $X\=U/U'$, where $U,U'\sim \U(0,1)$ are independent.
By \refE{E1u} and \refT{TY1}, with 
$Z\sim\SSSS1{\pi/2}1{1-\ggam}$, $b=1$,
$\mu\=\E U=1/2$
and
\begin{equation}
\nu\=\E U\log U
=\intoi x\log x\dd x
=-\frac14,
\end{equation}
we obtain
\begin{equation}
  \frac{S_n}n-\frac12\log n\dto \frac12Z - \nu +\frac12\log\frac12
=\frac12Z+\frac14-\frac12\log2
\sim\SSSx1\Bigpar{{\frac{\pi}4},1,{\frac34-\frac{\ggam}2}}.
\end{equation}
\end{example}

\begin{example}
  Let $X\=Y/Y'$ where $Y,Y'\sim\Exp(1)$ are independent.
(Thus $X$ has the $F$-distribution $F_{2,2}$.)
By \refE{E1exp} and \refT{TY1}, with 
$Z\sim\SSSS1{\pi/2}1{1-2\ggam}$,
$b=1$,
$\mu\=\E Y=1$ and
\begin{equation}
\nu\=\E Y\log Y
=\intoo x\log x \,e^{-x}\dd x
=\Gamma'(2)=1-\ggam  ,
\end{equation}
we obtain
\begin{equation}
  \frac{S_n}n-\log n\dto Z - \nu 
=Z-1+\ggam
\sim\SSSS1{\pi/2}1{-\ggam}.
\end{equation}
This is in accordance with \refE{E1u}, since, as is well-known,
$U\=Y'/(Y+Y')\sim \U(0,1)$, and thus we can write $X=(Y+Y')/Y'-1=1/U-1$.
\end{example}

\begin{example}
  Let $X\=V^2/W$ where $V\sim \U(-\frac12,\frac12)$ and $W\sim\Exp(1)$ are
  independent. 
By \refE{E1exp} and \refT{TY1}, with 
$Z\sim\SSSS1{\pi/2}1{1-2\ggam}$,
$b=1$,
$\mu\=\E V^2=1/12$ and
\begin{equation}
  \begin{split}
\nu&\=2\E V^2\log |V|
=4\int_0^{1/2} x^2\log x\dd x
=4\lrsqpar{\frac{x^3}3\log x-\frac{x^3}9}_0^{1/2}
\\&\phantom:
=-\frac{3\log 2+1}{18},	
  \end{split}
\end{equation}
we obtain
\begin{equation}
  \frac{S_n}n-\frac1{12}\log n\dto
\frac1{12}Z-\nu+\frac1{12}\log\frac1{12}
\sim\SSSx1\Bigpar{{\frac{\pi}{24}},1,{\frac{5-6\ggam+6\log2}{36}}}.
\end{equation}
Equivalently, using \refR{Rlinx},
\begin{equation}
  \frac{24 S_n}{\pi n}-\frac2{\pi}\log n\dto
\frac2{\pi}Z-\frac{24\nu}\pi-\frac2{\pi}\log12
\sim\SSSx1\Bigpar{{1},1,{\frac2\pi\Bigpar{\frac53-2\ggam+\log\frac\pi6}}}.
\end{equation}
This is shown directly in
\citet[Theorem 5.2 and its proof]{HPS2004}.
\end{example}

\begin{example}
More generally, let $X\=V^2/W$ where $V\sim \U(q-1,1)$ and $W\sim\Exp(1)$ are
  independent, for some fixed real $q$.
By \refE{E1exp} and \refT{TY1}, with 
$Z\sim\SSSS1{\pi/2}1{1-2\ggam}$,
$b=1$,
\begin{equation}
  \mu=\E V^2
=\Bigpar{\E V}^2+\Var V
=\Bigpar{q-\frac12}^2+\frac1{12}
=\frac{3q^2-3q+1}{3}
\end{equation}
and
\begin{equation}
  \begin{split}
\nu&\=2\E V^2\log |V|
=2\int_{q-1}^{q} x^2\log |x|\dd x
=2\lrsqpar{\frac{x^3}3\log |x|-\frac{x^3}9}_{q-1}^{q}
\\&\phantom:
=2\frac{q^3\log |q|+(1-q)^3\log|1-q|}{3}-2\frac{3q^2-3q+1}{9}  ,
\end{split}
\end{equation}
we obtain
\begin{equation}
  \frac{S_n}n-\mu\log n\dto
\mu Z-\nu+\mu\log\mu
\sim\SSSx1\Bigpar{{\mu\frac{\pi}{2}},1,{(1-2\ggam)\mu-\nu}}.
\end{equation}
Equivalently, using \refR{Rlinx},
\begin{equation}
  \frac{S_n-n (\E V)^2}{\mu n}-\log n\dto
Z-\frac{\nu}{\mu}+\log\mu-\frac{(\E V)^2}{\mu}
\sim\SSSx1\Bigpar{{\frac{\pi}{2}},1,b_q},
\end{equation}
with
\begin{equation}
  b_q\=
\frac23-2\ggam
-2\frac{q^3\log |q|+(1-q)^3\log|1-q|}{3q^2-3q+1}+\log\frac{3q^2-3q+1}{3}
+\frac1{12\mu}  .
\end{equation}

This is shown (in the case $0\le q\le 1$)
directly in
\citet[Theorem 4.2 and its proof]{HPS2005}.
\end{example}

\newcommand\AAP{\emph{Adv. Appl. Probab.} }
\newcommand\JAP{\emph{J. Appl. Probab.} }
\newcommand\JAMS{\emph{J. \AMS} }
\newcommand\MAMS{\emph{Memoirs \AMS} }
\newcommand\PAMS{\emph{Proc. \AMS} }
\newcommand\TAMS{\emph{Trans. \AMS} }
\newcommand\AnnMS{\emph{Ann. Math. Statist.} }
\newcommand\AnnPr{\emph{Ann. Probab.} }
\newcommand\CPC{\emph{Combin. Probab. Comput.} }
\newcommand\JMAA{\emph{J. Math. Anal. Appl.} }
\newcommand\RSA{\emph{Random Struct. Alg.} }
\newcommand\ZW{\emph{Z. Wahrsch. Verw. Gebiete} }
\newcommand\DMTCS{\jour{Discr. Math. Theor. Comput. Sci.} }

\newcommand\AMS{Amer. Math. Soc.}
\newcommand\Springer{Springer-Verlag}
\newcommand\Wiley{Wiley}

\newcommand\vol{\textbf}
\newcommand\jour{\emph}
\newcommand\book{\emph}
\newcommand\inbook{\emph}
\def\no#1#2,{\unskip#2, no. #1,} 
\newcommand\toappear{\unskip, to appear}

\newcommand\urlsj[1]{\url{http://www.math.uu.se/~svante/papers/#1}}
\newcommand\arxiv[1]{\url{arXiv:#1.}}
\newcommand\arXiv{\arxiv}

\def\nobibitem#1\par{}

\end{document}